\theoremstyle{definition}
\newtheorem{definition}{Definition}[section]
\newtheorem{remark}[definition]{Remark}
\newtheorem{example}[definition]{Example}
\theoremstyle{plain}
\newtheorem{proposition}[definition]{Proposition}
\newtheorem{propdef}[definition]{Proposition-Definition}
\newtheorem{lemma}[definition]{Lemma}
\newtheorem{theorem}[definition]{Theorem}
\newtheorem{corollary}[definition]{Corollary}
\newtheorem{introtheorem}{Theorem}
\newlist{steplist}{enumerate}{1}
\setlist[steplist]{label=\textbf{Step \arabic*}, align=left, wide=1pt}
\newlist{caselist}{enumerate}{1}
\setlist[caselist]{label=\textbf{Case \arabic*}, align=left, wide=1pt}
\newcommand{\Z}{\mathbb{Z}}
\newcommand{\Q}{\mathbb{Q}}
\newcommand{\R}{\mathbb{R}}
\newcommand{\C}{\mathbb{C}}
\newcommand{\A}{\mathbf{A}} 
\newcommand{\AX}{\mathbf{A}\langle X_G \rangle} 
\newcommand{\AXX}{\mathbf{A}\langle\langle X_G \rangle \rangle} 
\newcommand{\AXXGd}{\mathbf{A}\langle\langle X_{G^d} \rangle \rangle} 
\newcommand{\AYY}{\mathbf{A}\langle\langle Y_G \rangle \rangle} 
\newcommand{\h}{\mathfrak{H}} 
\newcommand{\reg}{\mathrm{reg}} 
\newcommand{\ev}{\mathrm{ev}} 
\newcommand{\EDS}{\mathsf{EDS}} 
\newcommand{\EDSD}{\mathsf{EDSD}} 
\newcommand{\DMR}{\mathsf{DMR}} 
\newcommand{\DMRD}{\mathsf{DMRD}} 
\newcommand{\q}{\operatorname{q}} 
\newcommand{\qq}{\mathbf{q}} 
\newcommand{\G}{\mathcal{G}} 
\newcommand{\st}{\ast} 
\newcommand{\Li}{\operatorname{Li}}
\DeclareFontFamily{U}{wncy}{}
\DeclareFontShape{U}{wncy}{m}{n}{<->wncyr10}{}
\DeclareSymbolFont{mcy}{U}{wncy}{m}{n}
\DeclareMathSymbol{\sh}{\mathord}{mcy}{"78} 
\newcommand{\LL}{\mathcal{L}}
\newcommand{\QL}{\Q\langle \LL \rangle}
\newcommand{\ALL}{\A\langle \langle \LL \rangle \rangle}
\newcommand{\qsh}{\ast_\diamond}
\newcommand{\rsh}{\overline{\reg}_\sh}
\newcommand{\df}{\coloneqq}
\title[Conjecture of Zhao and standard relations]{On a conjecture of Zhao related to standard relations among cyclotomic multiple zeta values}
\author{Henrik Bachmann}
\address{{\scriptsize Graduate School of Mathematics, Nagoya University, Furo-cho, Chikusa-ku, Nagoya, 464-8602, Japan.}}
\email{henrik.bachmann@math.nagoya-u.ac.jp}
\author{Khalef Yaddaden}
\address{{\scriptsize Graduate School of Mathematics, Nagoya University, Furo-cho, Chikusa-ku, Nagoya, 464-8602, Japan.}}
\email{khalef.yaddaden.c8@math.nagoya-u.ac.jp }
\subjclass[2020]{Primary 
11M32, 
16T05. 
Secondary 11F32, 
14A15
}
\keywords{multiple zeta values, multiple polylogarithms, quasi-shuffle Hopf algebras, double shuffle relations, distribution relations}
\begin{document}
    \begin{abstract}
        We provide a proof of a conjecture by Zhao concerning the structure of certain relations among cyclotomic multiple zeta values in weight two. We formulate this conjecture in a broader algebraic setting in which we give a natural equivalence between two schemes attached to a finite abelian group $G$. In particular, when $G$ is the group of roots of unity, these schemes describe the standard relations among cyclotomic multiple zeta values. 
    \end{abstract}

    \maketitle
	
    {\footnotesize \tableofcontents}
    
    \section*{Introduction}

The purpose of this paper is to give a proof of a conjecture of Zhao and to compare two distinct frameworks describing the algebraic relations among multiple zeta values and more generally multiple polylogarithms at roots of unity.
\emph{Multiple polylogarithms} generalize the classical polylogarithms and are defined for $k_1,\dots,k_r\geq 1$ by 
\begin{equation}\label{eq:defli}
\Li_{(k_1,\dots,k_r)}(z_1,\dots,z_r) = \sum_{n_1>\dots>n_r>0} \frac{z_1^{n_1} \cdots z_r^{n_r}}{n_1^{k_1} \cdots n_r^{k_r}},
\end{equation}
which converge for suitable values of $z_i$. In \eqref{eq:defli} we call $r$ the \emph{depth} and $k_1+\dots+k_r$ the \emph{weight}. 
The special case where $k_1 \geq 2$ and $z_1 = \dots = z_r = 1$, yields \emph{multiple zeta values (MZVs)} $\zeta(k_1,\dots,k_r) = \Li_{(k_1,\dots,k_r)}(1,\dots,1)$. From the perspective of Ihara, Kaneko, and Zagier \cite{IKZ}, conjecturally all algebraic relations of MZVs are a consequence of the extended double shuffle (EDS) relations.  Another special case for the values of $z_i$ in \eqref{eq:defli} is when they are $N$-th roots of unity. The resulting values are sometimes referred to as colored or \emph{cyclotomic multiple zeta values} of level $N$. Besides the extended double shuffle relations these values also satisfy the so-called finite and regularized distribution relations. The main result of this work is the following.

\begin{introtheorem}[{\cite[Conjecture 4.7]{Zha10}}]\label{thm:introzhao} In weight two, all regularized distribution relations are consequences of the extended double shuffle relations, finite distribution relations of weight one and finite distribution relations of depth two.
\end{introtheorem}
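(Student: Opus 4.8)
The plan is to reduce the weight-two statement to an explicit, finite-dimensional linear-algebra comparison over the group algebra of $G$. Since the weight is fixed at two, the relevant multiple polylogarithm values are indexed by finite data: depth-one values $\Li_2(\zeta)$ for $\zeta \in G$, and depth-two values $\Li_{(1,1)}(\zeta,\eta)$ together with their shuffle/stuffle relatives $\Li_{(2)}$ and products $\Li_1(\zeta)\Li_1(\eta)$. First I would set up the two schemes attached to $G$ from the abstract in this weight-two slice, writing each relation type---extended double shuffle, finite distribution, and regularized distribution---as an explicit element (or family of elements) in the span of these finitely many symbols indexed by $G$ or $G\times G$. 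The key observation to exploit is that in weight two the regularization (the $\rsh$ and $\qsh$ machinery implicit in the excerpt's macros) is governed entirely by how the single divergent generator in depth one extends, so the regularized distribution relations can be written down completely.

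Next I would organize the computation by distribution structure. For each subgroup or quotient coming from a surjection $G \twoheadrightarrow G'$ (equivalently a divisor $d$ of $N$ in the cyclotomic case), the distribution relation averages $\Li$-values over fibers. I would separate the finite distribution relations into those of weight one (purely a statement about $\Li_1$, i.e.\ about the depth-one part after regularization) and those of depth two (a statement about $\Li_{(1,1)}$). The claim is that the regularized distribution relation in weight two is a linear combination of: (i) the finite distribution relation in depth two, (ii) the finite distribution relation in weight one, multiplied and regularized, and (iii) a correction lying in the EDS ideal. Concretely, I would take the regularized distribution relation, subtract the depth-two finite relation to cancel the convergent $\Li_{(1,1)}$ contribution, and then show the remaining terms---which involve only products $\Li_1\cdot\Li_1$ and the regularized $\Li_1$ self-products---are exactly accounted for by shuffle-squaring the weight-one distribution relation and correcting by stuffle.

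The main technical steps are therefore: (a) derive the precise shuffle and stuffle regularizations of the weight-one and depth-two generators, so that $\rsh$ applied to a product $\Li_1(\zeta)\,\Li_1(\eta)$ is expressed through $\Li_2$, $\Li_{(1,1)}$, and the regularized constant; and (b) verify that the ``distribution minus shuffle-square'' discrepancy lands in the EDS-span. Step (b) is where I would organize the proof around explicit matrices indexed by $G$: the distribution map and the EDS relations become concrete linear operators, and the assertion becomes an inclusion of images (or a rank/kernel comparison). I expect the main obstacle to be bookkeeping the regularization constants correctly---matching the $\sh$-regularized and $\qsh$-regularized values of the weight-one divergent generator across the distribution map, since a misplaced constant multiple of $\zeta(2)$ or $(\log)^2$-type term would break the cancellation. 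A secondary difficulty is ensuring the argument is uniform in the arbitrary finite abelian group $G$ rather than relying on special features of the cyclic cyclotomic case; I would handle this by phrasing the distribution relations group-theoretically via characters of $G$, so that the comparison diagonalizes and the weight-two claim reduces to a per-character identity that can be checked directly.
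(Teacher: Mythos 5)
Your overall strategy -- treat the weight-two slice as a finite-dimensional linear-algebra problem in the span of the symbols $x_0x_h$ and $x_{h_1}x_{h_2}$, and exhibit the regularized distribution element as an explicit combination of double shuffle elements and finite distribution elements -- is the same architecture the paper uses. But the proposal stops exactly where the work begins, and the one concrete mechanism you do name is aimed slightly wrong. The decisive step in the paper is an explicit combinatorial identity in $\h^0_G$ (Theorem \ref{thm:FDTd1}): for $h\in G^d\smallsetminus\{1\}$,
\[
\mathrm{FDT}_{d,1}(h)=\sum_{\substack{g_1^d=h\\ g_2\in K_d\smallsetminus\{1\}}}\mathrm{FDS}(g_1,g_2)+\sum_{g^d=h}\mathrm{RDS}(g)-\mathrm{RDS}(h)+\mathrm{FDT}_{d,2}(h,1)-\mathrm{FDT}_{d,2}(h,h),
\]
with a companion identity for $h=1$. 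Your plan to obtain the $\Li_2$-distribution discrepancy by ``shuffle-squaring the weight-one distribution relation'' does not produce this: squaring the fiber sum $\sum_{g^d=h}\Li_1(g)$ and applying the stuffle yields $\Li_2$ at arguments in the fiber over $h^2$, not over $h$. The correct pairing is the fiber over $h$ against the fiber over $1$, and since the fiber over $1$ contains the divergent letter $x_1$, this forces the \emph{regularized} double shuffle elements $\mathrm{RDS}(g)=x_0x_g+x_gx_g-x_gx_1$ into the combination, together with the depth-two distribution elements $\mathrm{FDT}_{d,2}(h,1)-\mathrm{FDT}_{d,2}(h,h)$ to cancel the leftover $x_h x_{(-)}$ terms. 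Notably, the weight-one finite distribution relation itself plays no role in this identity; in the paper it is needed only for the mixed word $x_1x_h$. Asserting that the discrepancy ``lands in the EDS-span'' and proposing a rank or character-theoretic comparison defers rather than supplies the proof; the inclusion must be witnessed by an explicit combination (or an explicit solution of the corresponding linear system), and that is the entire content of the theorem.

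A second gap is the passage between the finite statements and the \emph{regularized} distribution relation, which is an identity of maps into $\A[T]$ involving the twist $\sigma_{Z_\A}$ (the exponential of $\sum_{g^d=1,\,g\neq 1}Z_\A(x_g)$). Your sketch acknowledges the regularization bookkeeping as the main obstacle but gives no mechanism for it. The paper handles this by a case analysis over $h_1,h_2\in G^d\sqcup\{0\}$: the words containing $x_0$ or $x_1$ in first position are reduced to the convergent ones via shuffle identities such as $x_hx_0=x_h\,\sh\,x_0-x_0x_h$ and $x_1^{\sh m}=m!\,x_1^m$, using that $\overline{Z}^{\sh}_\A\circ p^d_\sharp$ and $\overline{Z}^{\sh}_\A\circ i^\sharp_d$ are algebra homomorphisms for $\sh$ and that $\sigma_{Z_\A}$ is $\A$-linear with $\sigma_{Z_\A}(1)=1$. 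Without this reduction (or an equivalent one, e.g.\ Proposition \ref{prop:dist_iff_noeval}), the claim proved for convergent words does not yet imply the regularized relation as stated in the conjecture.
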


We will prove a more general variant of Theorem \ref{thm:introzhao}, which provides a similar statement regarding a generalization of the distribution and double shuffle relations. This generalization will be presented within two distinct algebraic frameworks. Theorem \ref{thm:introzhao} will follow as a consequence of Theorem \ref{thm:zhaoconj} and Proposition \ref{conj_equiv_thm}, established in this broader context. One framework extends the algebraic setup of Ihara, Kaneko, and Zagier (\cite{IKZ}), while the other was introduced by Racinet (\cite{Rac02}). In the following, we will outline these two frameworks and compare them. \\

Following Ihara, Kaneko, and Zagier (\cite{IKZ}), the double shuffle relations can be viewed as a comparison between two product formulas satisfied by MZVs. To describe them, one introduces a space $\h^0$ equipped with two products: $\sh$ (shuffle product) and $\st$ (harmonic product). MZVs are then seen as an algebra homomorphism from $\h^0$ to $\R$ with respect to both products. By extending this map in two natural ways, through regularization, to two algebra homomorphisms from a larger algebra $\h^0 \subset \h^1$ to $\R[T]$, along with an explicit comparison map $\rho: \R[T] \rightarrow \R[T]$ between both regularizations, the extended double shuffle relations are established. In the context of cyclotomic multiple zeta values, one can similarly describe the algebraic relations satisfied by these values (e.g. \cite{AK,Tas21,YZ}) by considering them as algebra homomorphisms of an extension of $\h^0$. 

In this paper, we generalize this algebraic framework by introducing a space $\h^0_G$ associated with a finite abelian group $G$. We then provide a definition of what it means for a $\Q$-linear map $Z_\A$ from $\h^0_G$ to a $\Q$-algebra $\A$ to satisfy the extended double shuffle relations. In the cases where $G$ is trivial or where $G$ is the group of $N$-th roots of unity, this notion reduces to the classical cases of multiple zeta values and their level $N$ analogues, respectively. For a given group $G$ and $\Q$-algebra $\A$ we then introduce the set $\EDS(G)(A)$ of all such maps. \\ 

A somehow dual point of view of the above story is Racinet's (\cite{Rac02}) approach of the study of MZVs through the lens of groupoid-enriched Hopf algebras, emphasizing the role of group actions and the double shuffle relations in this context. In his framework, one considers formal non-commutative power series with coefficients in $\A$, which are, after a certain correction, group-like for two different coproducts. Denote by $\DMR(G)(\A)$ (Definition \ref{def:dmrg}) the set of all such series. This formalism builds upon the notion of Drinfeld associator introduced in \cite{Dri91} which is a generating series of MZVs that satisfies associator relations describing periods of mixed Tate motives. It is shown in \cite{Fur11} that associator relations imply double shuffle relations and it is conjectured to be equivalent. A cyclotomic analogue of Drinfeld associator called cyclotomic associator is studied by \cite{Enr08} and its connection with double shuffle relations was established in \cite{Fur13}. \\

Both the setup introduced by Ihara, Kaneko, and Zagier and the one introduced by Racinet have been used by various authors to describe the relations among multiple polylogarithms or (cyclotomic) multiple zeta values. Their equivalence is in some sense common knowledge among the community but, as far as the authors know, it was not written down precisely somewhere so far, except for the classical case of trivial $G$ (cf. \cite[Proposition 3.1]{ENR}, \cite{Bur23}). 
In this work, we give an explicit comparison for arbitrary $G$ by assigning to a $Z_\A \in \EDS(G)(\A) $ an element $\Phi_{Z_\A \circ \rsh} \in \DMR(G)(\A)$ (Definition \ref{def:phiza}), such that we have the following

\begin{introtheorem}[{Theorem \ref{main_bijection}}]\label{main_bijection_intro}
    The following map is bijective
    \begin{equation}\label{eq:edsdmrintrobij}
        \begin{array}{ccc}
             \EDS(G)(\A) & \longrightarrow & \DMR(G)(\A), \\
             Z_\A & \longmapsto & \Phi_{Z_\A \circ \rsh}.
        \end{array}
    \end{equation}
\end{introtheorem}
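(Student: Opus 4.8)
The plan is to establish the bijection in \eqref{eq:edsdmrintrobij} by constructing an explicit two-sided inverse. Since the forward map $Z_\A \mapsto \Phi_{Z_\A \circ \rsh}$ is built by composing $Z_\A$ with the shuffle regularization $\rsh$ and packaging the resulting values into a generating series $\Phi \in \ALL$, I would first analyze the structure of this construction. The element $\Phi_{Z_\A \circ \rsh}$ should be group-like for the coproduct making $\ALL$ a Hopf algebra dual to the shuffle side, and the $\DMR(G)(\A)$ condition encodes precisely the compatibility with the second (harmonic/quasi-shuffle) coproduct after Racinet's correction. My strategy is to reconstruct $Z_\A$ from $\Phi$ by extracting coefficients: a series $\Phi \in \DMR(G)(\A)$ determines $\Q$-linear data on the words generating $\h^0_G$, and I would verify this assignment lands in $\EDS(G)(\A)$.

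First I would make precise the dictionary between the two sides. The space $\h^0_G$ has a basis of admissible words, and evaluating $Z_\A \circ \rsh$ on all words (admissible and non-admissible) is equivalent to specifying the coefficients of the series $\Phi_{Z_\A \circ \rsh}$. The key point is that shuffle regularization $\rsh$ extends $Z_\A$ canonically from $\h^0_G$ to the full word algebra, so the generating series records no more and no less information than $Z_\A$ itself — this should give injectivity almost immediately, since $Z_\A$ can be recovered by reading off the coefficients of $\Phi_{Z_\A \circ \rsh}$ on admissible words. For surjectivity, I would take an arbitrary $\Phi \in \DMR(G)(\A)$, define a candidate linear map $Z_\A$ on admissible words by coefficient extraction, and then verify that it satisfies the extended double shuffle relations defining membership in $\EDS(G)(\A)$.

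The main obstacle will be the surjectivity and well-definedness direction: showing that the group-likeness conditions packaged in $\DMR(G)(\A)$ translate exactly into the shuffle and harmonic product relations plus the regularization compatibility that define $\EDS(G)(\A)$. Concretely, the shuffle group-likeness of $\Phi$ must yield the shuffle relations for $Z_\A$, while the group-likeness for the corrected coproduct — Racinet's $\DMRD$-type condition — must reproduce the harmonic relations together with the regularization map $\rho$ comparing the two regularizations. The delicate part is matching Racinet's correction term with the explicit comparison map $\rho: \R[T] \to \R[T]$ (here generalized to $\A$-coefficients) built into the $\EDS$ formalism; the coproduct correction on the $\DMR$ side and the regularization comparison on the $\EDS$ side must be shown to encode the same data. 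I expect this to require a careful bookkeeping argument, likely using the explicit form of $\Phi_{Z_\A \circ \rsh}$ from Definition \ref{def:phiza} together with the duality between the two Hopf algebra structures on the word algebra, so that the two coproduct conditions on $\ALL$ correspond term-by-term to the two product conditions on $\h^0_G$.
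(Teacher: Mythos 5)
Your proposal follows essentially the same route as the paper: injectivity by reading off coefficients on the words of $\h^0_G$, and surjectivity by defining $Z_\A$ from the coefficients of a given $\Phi \in \DMR(G)(\A)$ and reducing the verification to the equivalence $Z_\A \in \EDS(G)(\A) \Leftrightarrow \Phi_{Z_\A \circ \rsh} \in \DMR(G)(\A)$, whose technical core (matching $\Phi_{\mathrm{corr}}$ with $\rho_{Z_\A}$) you correctly identify. The one step to make explicit is that the coefficients of $\Phi$ on non-admissible words are forced, by shuffle group-likeness together with $(\Phi \mid x_0) = (\Phi \mid x_1) = 0$, to coincide with the regularized extension $Z_\A \circ \rsh$ — the paper isolates this as the uniqueness statement of Lemma \ref{h0h_extension}.
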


Racinet's viewpoint introduces a more general setup then the one of Ihara, Kaneko, and Zagier, accommodating also the so-called distribution relations. For multiple polylogarithms these were explicited by Goncharov \cite{Gon98, Gon01}.

Distribution relations are not a consequence of the extended double shuffle relations and they are trivial in the classical case if $G$ is trivial. Both are part of the so-called \emph{standard relations} among the cyclotomic multiple zeta values (\cite{Zha10}) in the case if $G$ is the group of roots of unity.
The distribution relations were incorporated by Racinet by introducing a subset $\DMRD(G)(\A) \subset \DMR(G)(\A)$, which gives the subset of elements that additionally satisfy these relations. As a counterpart to this, we introduce the subset $ \EDSD(G)(\A) \subset \EDS(G)(\A)$ (Definition \ref{def:edsd}) and show the following. 
\begin{introtheorem}[{Theorem 
    \ref{main_bijection_2}}] \label{main_bijection_2_intro}
    The bijection \eqref{eq:edsdmrintrobij} restricts to a bijection
    \[
        \EDSD(G)(\A) \longrightarrow \DMRD(G)(\A).
    \]
\end{introtheorem}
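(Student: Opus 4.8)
The plan is to lean on Theorem \ref{main_bijection}: since the assignment $Z_\A \mapsto \Phi_{Z_\A \circ \rsh}$ is already known to be a bijection $\EDS(G)(\A) \to \DMR(G)(\A)$, the only thing left to prove is that it carries the extra condition defining $\EDSD(G)(\A)$ exactly onto the extra condition defining $\DMRD(G)(\A)$. Concretely, it suffices to establish, for every $Z_\A \in \EDS(G)(\A)$, the equivalence
\[
    Z_\A \in \EDSD(G)(\A) \iff \Phi_{Z_\A \circ \rsh} \in \DMRD(G)(\A).
\]
Both inclusions $\EDSD(G)(\A) \subset \EDS(G)(\A)$ and $\DMRD(G)(\A) \subset \DMR(G)(\A)$ are cut out by a family of distribution relations indexed by the $d$-th power maps $G \to G^d$. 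My first step is therefore to unwind Definition \ref{def:edsd} and Racinet's definition of $\DMRD(G)(\A)$, and to record, for each admissible $d$, the distribution operator on $\h^0_G$ whose kernel $Z_\A$ must contain together with the corresponding distribution morphism $\AXX \longrightarrow \AXXGd$ appearing on the Racinet side.

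The engine of the argument is a naturality statement relating the comparison construction to these distribution maps. For each admissible $d$ there are pushforward maps $\delta_d$ on the $\EDS$/$\h^0_G$ side and $\partial_d$ on the $\DMR$/$\AXX$ side induced by $G \to G^d$, and I would prove that the comparison construction intertwines them, i.e.
\[
    \partial_d\bigl(\Phi_{Z_\A \circ \rsh}\bigr) = \Phi_{(\delta_d Z_\A)\circ\rsh}.
\]
The distribution relation at level $d$ on each side asserts that the relevant object is fixed by the endomorphism built from $\delta_d$ (resp.\ $\partial_d$) and the canonical inclusion $G^d \hookrightarrow G$, so once the square commutes, one condition transports to the other and the claimed equivalence follows for all $d$ simultaneously. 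Establishing the identity decomposes into two checks: first, that the generating-series assignment $Z \mapsto \Phi_{Z}$ of Definition \ref{def:phiza} is natural in $G$ with respect to the word-algebra morphisms $\AXX \to \AXXGd$, which is formal since $\Phi_Z$ is assembled from the values of $Z$ on words and the distribution morphism acts compatibly on the indexing variables $X_G$; and second, that the shuffle-regularization $\rsh$ is compatible with the distribution map at the level of $\h^1_G$.

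The delicate point, and the step I expect to be the main obstacle, is precisely this second compatibility, namely the interaction of the distribution maps with regularization. The \emph{finite} distribution relations live entirely in $\h^0_G$ and transport without incident, but the \emph{regularized} distribution relations involve the divergent weight-one part, and Racinet's condition defining $\DMRD(G)(\A)$ carries an explicit correction term concentrated in depth one. The heart of the matter is to show that pushing a shuffle-regularized element through $\delta_d$ produces exactly this correction: that $\rsh$ fails to commute with $\delta_d$ only by a controlled amount supported in weight one, and that this discrepancy matches the depth-one correction built into $\partial_d$. This is where the weight-one finite distribution relation enters, and tracking the $\log$-type contribution through the comparison map to verify the exact form of the correction term is the computation on which the equivalence rests. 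Once it is settled, both inclusions of the displayed equivalence follow from the intertwining identity, and the bijection of Theorem \ref{main_bijection} restricts to the asserted bijection $\EDSD(G)(\A) \to \DMRD(G)(\A)$.
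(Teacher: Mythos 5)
Your overall strategy is the one the paper follows: reduce via Theorem \ref{main_bijection} to the pointwise equivalence $Z_\A \in \EDSD(G)(\A) \iff \Phi_{Z_\A \circ \rsh} \in \DMRD(G)(\A)$, transport each distribution condition through the comparison map using the duality between the series-side maps $p^d_\ast, i_d^\ast$ and the polynomial-side maps $p^d_\sharp, i_d^\sharp$ (Lemma \ref{lem:astsharp}), and match the depth-one correction factor $\exp\bigl(\sum_{g^d=1}(\Phi\mid x_g)x_1\bigr)$ in Racinet's condition against the operator $\sigma_{Z_\A}$ on the $\EDS$ side. You also correctly locate the weight-one discrepancy: the intertwining identity $\partial_d(\Phi_{Z_\A\circ\rsh}) = \Phi_{(\delta_d Z_\A)\circ\rsh}$ cannot hold on the nose, precisely because $\rsh$ kills $x_1$ while $p^d_\sharp(x_1)=\sum_{g^d=1}x_g$ survives in the components $g\neq 1$; so the square commutes only up to the exponential correction, which is what must be verified.

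There is, however, a genuine gap in the transport argument, in the direction $\DMRD \Rightarrow \EDSD$. The condition defining $\EDSD(G)(\A)$ is the identity $\overline{Z}_\A^\sh\circ p^d_\sharp = \sigma_{Z_\A}\circ\overline{Z}_\A^\sh\circ i_d^\sharp$ of maps $\h_{G^d}\to\A[T]$, whereas the coefficients of $p^d_\ast(\Phi_{Z_\A\circ\rsh})$ and of the corrected $i_d^\ast(\Phi_{Z_\A\circ\rsh})$ only detect the composition with $\ev_0^\A$, i.e.\ the specialization $T=0$. Your phrase ``once the square commutes, one condition transports to the other'' silently assumes that the $T=0$ instance of the regularized distribution relation already implies the full $\A[T]$-valued identity. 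This is true but requires proof: it is Proposition \ref{prop:dist_iff_noeval} in the paper, whose argument decomposes an arbitrary word of $\h_{G^d}$ as a shuffle $x_1^m \,\sh\, w$ with $w$ not beginning in $x_1$, computes $\overline{Z}_\A^\sh\circ p^d_\sharp(x_1^m)$ and $\sigma_{Z_\A}\circ\overline{Z}_\A^\sh\circ i_d^\sharp(x_1^m)$ explicitly as polynomials in $T$ (this is exactly where the correction $\sigma_{Z_\A}$ is seen to match the binomial expansion of $(T+\delta_1)^m/m!$), and reduces the words not starting in $x_1$ to the evaluated case. Without this step your equivalence only yields $\ev_0^\A\circ\overline{Z}_\A^\sh\circ p^d_\sharp = \ev_0^\A\circ\sigma_{Z_\A}\circ\overline{Z}_\A^\sh\circ i_d^\sharp$, which is a priori weaker than membership in $\EDSD(G)(\A)$. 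You should either prove this evaluation lemma or redefine nothing and supply the coefficient computation on words $x_1^m w$ directly, as the paper does in the two cases of its proof of Theorem \ref{main_bijection_2}.
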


While the result of Theorem \ref{main_bijection_intro} may be regarded as a well-known fact among experts, it has not yet been explicitly documented in the literature in the general form presented here. Nevertheless, we emphasize that the result in Theorem \ref{main_bijection_2_intro} is novel, as the concept of the space $\EDSD(G)(\A)$ is entirely new and has not been discussed or introduced elsewhere.\\

    \paragraph{\textbf{Acknowledgments}}
    This project was partially supported by JSPS KAKENHI Grants 23K03030 and 23KF0230. The authors would like to thank Hidekazu Furusho and Benjamin Enriquez for fruitful comments on this project. \\

    \paragraph{\textbf{Notation}}
    Throughout this paper, we will use the following notations
    \begin{enumerate}[label=(\roman*)]
        \item $G$ is a finite abelian (multiplicative) group.
        \item $X_G \df \{x_0\} \sqcup \{x_g | g \in G\}$.
        \item $Y_G \df \{ y_{n,g} | (n,g) \in \Z_{>0} \times G\}$.
        \item $\A$ is a commutative $\Q$-algebra with unit.
        \item $\LL^\ast$ is the monoid generated by a set $\LL$.
    \end{enumerate}

    \section{Double shuffle relations}

In this section, we review two fundamental formalisms used in the algebraic setup of double shuffle relations. We recall Racinet's formalism as presented in \cite{Rac02} encoding double shuffle relations into a scheme $\mathsf{DMR}(G)$ attached to a group $G$. Another formalism was developed by Ihara, Kaneko and Zagier \cite{IKZ} for trivial $G$ and Arakawa, Kaneko \cite{AK} for general $G$. Following Racinet's approach, we encode this formalism into a scheme denoted as $\mathsf{EDS}(G)$. The section concludes by proving that the schemes $\mathsf{DMR}(G)$ and $\mathsf{EDS}(G)$ are isomorphic, thus establishing formally the equivalence between the two formalisms.      

\subsection{The scheme \texorpdfstring{$\EDS$}{EDS} of extended double shuffle relations}

Denote by $\h_G \df \mathbb{Q}\langle X_G \rangle$ the free non-commutative associative polynomial $\Q$-algebra with unit over the alphabet $X_G$. Set
\[
    \h^1_G \df \Q + \sum_{g \in G} \h_G x_g \quad \text{ and }\quad \h^0_G \df \Q + \sum_{g \in G} x_0 \h_G x_g + \sum_{g \in G \neq \{1\}} x_g \h^1_G.  
\]
One checks that we have the following inclusion of $\Q$-algebras
\[
    \h^0_G \subset \h^1_G \subset \h_G.
\]

\noindent For $(n,g) \in \Z_{>0} \times G$, the assignment $y_{n,g} \mapsto x_{0}^{n-1} x_{g}$ establishes a $\Q$-algebra isomorphism $\mathbb{Q}\langle Y_G \rangle \simeq \h^1_G$. We shall identify these two algebras thanks to this isomorphism.

\begin{definition}
    We define the \emph{shuffle product} $\sh$ on $\h_G$ inductively by
    \begin{equation*}
        \begin{cases}
            1 \ \sh \ w = w \ \sh \ 1 = w & \text{for } w \text{ a word in } \h_G \\
            u w_{1} \ \sh \ v w_{2} = u(w_{1} \ \sh \ v w_{2}) + v(u w_{1} \ \sh \ w_{2}) & \text{for } w_1, w_2 \text{ words in } \h_G \text{ and } u, v \in X_G,
        \end{cases}
    \end{equation*}
    and then extending by $\mathbb{Q}$-bilinearity.
\end{definition}
\noindent The product $\sh$ gives $\h_G$ the structure of a commutative $\Q$-algebra, which we denote by $\h_{G, \sh}$. The subspaces $\h^1_G$ and $\h^0_G$ then become subalgebras of $\h_{G, \sh}$ denoted $\h_{G, \sh}^{1}$ and $\h_{G, \sh}^{0}$ respectively.

\begin{definition}
    We define the \emph{harmonic product} $\st$ on $\h^1_G$ inductively by
\begin{equation*}
    \begin{cases}
        1\st w=w\st 1=w \\
        y_{n_1, g_1} w_{1} \st y_{n_2, g_2} w_{2} = 
        y_{n_1, g_1}( w_{1} \st y_{n_2 ,g_2} w_{2}) +y_{n_2, g_2}( y_{n_1, g_1} w_{1} \st w_{2}) +y_{n_1 +n_2 ,\ g_1 g_2}(w_{1} \st w_{2}),
    \end{cases}
\end{equation*}
for words $w, w_1, w_2$ in $\h^1_G$, $(n_1 ,g_1), (n_2 ,g_2) \in \Z_{>0} \times G$; and then extending by $\Q$-bilinearity.
\end{definition}
\noindent One checks that $\h_{G, \st}^{1} \df \left(\h^1_G, \st \right)$ is a commutative $\Q$-algebra and that $\h_{G, \st}^{0} \df \left(\h^0_G, \st \right)$ is a subalgebra of $\h_{G, \st}^{1}$.

Let $\q_G$ be the $\Q$-linear automorphism of $\h_G$ given by (\cite[§2.2.7]{Rac02})
\[
    \mbox{\small $\q_G(x_0^{n_1-1}x_{g_1}x_0^{n_2-1}x_{g_2} \cdots x_0^{n_r-1}x_{g_r} x_0^{n_{r+1}-1}) = x_0^{n_1-1}x_{g_1}x_0^{n_2-1}x_{g_2g_1^{-1}} \cdots x_0^{n_r-1}x_{g_rg_{r-1}^{-1}}x_0^{n_{r+1}-1}$},
\]
for $r, n_1, \dots, n_{r+1} \in \Z_{>0}$ and $g_1, \dots, g_r \in G$. Its reciprocal is given by
\[
    \mbox{\small $x_0^{n_1-1}x_{g_1}x_0^{n_2-1}x_{g_2} \cdots x_0^{n_r-1}x_{g_r} x_0^{n_{r+1}-1} \mapsto x_0^{n_1-1}x_{g_1}x_0^{n_2-1}x_{g_1g_2} \cdots x_0^{n_r-1}x_{g_1 \cdots g_r}x_0^{n_{r+1}-1}$}.
\]
One checks that $\q_G$ restricts to a $\Q$-linear automorphism of $\h^1_G$ which we denote $\q_G$ as well and is given by
\[
    \q_G( y_{n_1,g_1} y_{n_2,g_2}  \cdots y_{n_r,g_r})  = y_{n_1,g_1} y_{n_2,g_2 g_1^{-1}}  \cdots y_{n_r,g_rg_{r-1}^{-1}}. 
\]
Moreover, $\q_G$ also restricts to an $\A$-module automorphism of $\h^0_G$ also denoted $\q_G$.

\begin{definition}
    A $\Q$-linear map $Z_{\A} :\h^{0} \to \A$ is said to have the \emph{finite double shuffle} property if it is an algebra homomorphism for $\sh$ and if $Z_{\A} \circ \q_G^{-1}$  is an algebra homomorphism for $\st$. i.e.
    \begin{align*}
        Z_{\A}(w_{1}) Z_{\A}(w_{2}) & = Z_{\A}(w_{1} \ \sh \ w_{2}), \\
        Z_{\A}(\q_G^{-1}(w_{1})) Z_{\A}(\q_G^{-1}(w_{2})) & = Z_{\A}(\q_G^{-1}(w_{1} \st w_{2})),
    \end{align*}
    for all $ w_{1} ,w_{2} \in \h^{0}$.
\end{definition}

\begin{example}\label{ex:polylog}
    In the classical case (\cite{AK},\cite{IKZ}), one takes $\A=\C$ and for a fixed $N\geq 1$ one considers the group of $N$-roots of unity $G=\mu_N$. Let us consider the $\Q$-linear map $Z_\C : \h^0_{\mu_N} \to \C$ given by $1 \mapsto 1$ and for $k>0$,
    \[
        u_1 \cdots u_k \longmapsto \int_0^1 \Omega_{u_1} \cdots \Omega_{u_k},
    \]
    where $\Omega_{x_0} = \frac{dt}{t}$ and $\Omega_{x_z} = \frac{dt}{z^{-1} - t}$. Here, the iterated integrals are recursively defined by
    \[
        \begin{cases}
            \displaystyle \int_a^b \omega_1 = \int_a^b f_1(t) dt & \\
            \displaystyle \int_a^b \omega_1 \cdots \omega_n = \int_a^b \left(\int_a^t \omega_2 \cdots \omega_n\right) f_1(t) dt & \text{if } n > 1,
        \end{cases}
    \]
    where $\omega_1, \dots, \omega_n$ are complex valued differential $1$-forms defined on a real interval $[a,b]$ such that $\omega_i = f_i(t) dt$ with complex functions $f_1, \dots, f_n$. It is known that (see, for example, \cite[Theorem 2.1]{Gon98}) 
    \begin{align}
        \Li_{(k_1,\dots,k_r)}(z_1,\dots,z_r) & = \int_0^1 \Omega_{x_0}^{k_1-1} \Omega_{x_{z_1}} \Omega_{x_0}^{k_2-1} \Omega_{x_{z_1 z_2}} \cdots \Omega_{x_0}^{k_r-1} \Omega_{x_{z_1 \cdots z_r}} \notag \\
        & = Z_\C \circ \q_{\mu_N}^{-1}(x_0^{k_1-1}x_{z_1}x_0^{k_2-1}x_{z_2} \cdots x_0^{k_r-1}x_{z_r}). \label{polylog_Zcircq}
    \end{align}
    Therefore, for instance, the harmonic product
    \[
        y_{1,z_1} \st y_{1,z_2} =  y_{1,z_1} y_{1,z_2} +  y_{1,z_2} y_{1,z_1} +  y_{2,z_1 z_2}
    \]
    corresponds to the identity
    \[
        \Li_1(z_1) \Li_1(z_2) = \Li_{(1,1)}(z_1,z_2) + \Li_{(1,1)}(z_2,z_1) + \Li_2(z_1 z_2).   
    \]
    The shuffle product
    \[
        x_{z_1} \, \sh \, x_{z_2} = x_{z_1} x_{z_2} + x_{z_2} \sh x_{z_1}  
    \]
    corresponds to the identity
    \[
        \Li_1(z_1) \Li_1(z_2) = \Li_{(1, 1)}(z_1, z_1^{-1} z_2) + \Li_{(1, 1)}(z_2, z_1z_2^{-1}).   
    \]
    Finite double shuffle relations is then expressed by the identity
    \[
        \Li_{(1,1)}(z_1,z_2) + \Li_{(1,1)}(z_2,z_1) + \Li_2(z_1 z_2) = \Li_{(1, 1)}(z_1, z_1^{-1} z_2) + \Li_{(1, 1)}(z_2, z_1z_2^{-1}).
    \]
\end{example}

\begin{definition}
    Define the $\Q$-algebra homomorphism $\rsh^T : \h_{G, \sh} \to \h^0_{G, \sh}[T]$ such that it is the identity on $\h^0$ and maps $x_0$ to $0$ and $x_1$ to $T$. \newline
    Define the $\Q$-algebra homomorphism $\reg_\sh^T : \h^1_{G, \sh} \to \h^0_{G, \sh}[T]$ to be the restriction of $\rsh^T$ to $\h^1_G$ (see \cite[§3]{Zha10} and \cite[§3]{IKZ} for $G=\{1\}$).  
\end{definition}

\begin{lemma} \label{lem:barregformula}
    For $w = x_g w^\prime \in \h_G$, with $g \in \{0\} \sqcup G \smallsetminus \{1\}$, we have the following equality in $\h^0_{G, \sh}[T]$
    \[
        \overline{\reg}_{\sh}^T\left(\frac{1}{1-x_1u} w\right) = x_g \left(\frac{1}{1+x_1u}  \sh \,\widetilde{\reg}_{\sh}(w^\prime)\right) \exp(Tu),
    \]
    where $u$ is a formal parameter and $\widetilde{\reg}_\sh: \h_{G,\sh} \rightarrow \h^1_{G,\sh}$ is the $\Q$-algebra homomorphism such that it is the identity on $\h^1_G$ and maps $x_0$ to $0$.
\end{lemma}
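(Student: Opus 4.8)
The plan is to expand both sides as formal power series in $u$ and reduce the statement to a combinatorial identity in $\h_G$ combined with the homomorphism property of $\overline{\reg}_\sh^T$. The key preliminary observation is that the two geometric series are shuffle exponentials: since $x_1^{\sh n} = n!\, x_1^n$, one has $\frac{1}{1-x_1u} = \sum_{n\ge 0}\frac{(x_1u)^{\sh n}}{n!}$ and likewise $\frac{1}{1+x_1u} = \sum_{n\ge 0}\frac{(-x_1u)^{\sh n}}{n!}$, so these two series are mutually inverse for $\sh$. As $\overline{\reg}_\sh^T$ is a homomorphism for $\sh$ with $\overline{\reg}_\sh^T(x_1) = T$, it sends $\frac{1}{1-x_1u}$ to the ordinary exponential $\exp(Tu)$; this accounts for the factor $\exp(Tu)$ on the right-hand side, and since the target $\h^0_{G,\sh}[T]$ is commutative, shuffling by this central series is ordinary multiplication.

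The heart of the argument is a concatenation-to-shuffle identity, valid in $\h_G[[u]]$ for $g \neq 1$:
\[
    \frac{1}{1-x_1u}\, w \;=\; \frac{1}{1-x_1u}\ \sh\ \Bigl( x_g\bigl( \tfrac{1}{1+x_1u}\ \sh\ w' \bigr)\Bigr),
\]
where the product on the left is concatenation in $\h_G$. Shuffling both sides by the inverse series $\frac{1}{1+x_1u}$ turns this into the equivalent form $\frac{1}{1+x_1u}\sh\bigl(\frac{1}{1-x_1u}\,w\bigr) = x_g\bigl(\frac{1}{1+x_1u}\sh w'\bigr)$, which I would prove by induction on the length of $w'$, using the recursive definition of $\sh$ together with the functional equations $\frac{1}{1-x_1u} = 1 + x_1 u\cdot\frac{1}{1-x_1u}$ and $\frac{1}{1+x_1u} = 1 - x_1 u\cdot\frac{1}{1+x_1u}$ (left concatenation by $x_1$); the point is that the leading letter $x_g \neq x_1$ makes the interior insertions of $x_1$ cancel against those produced by the inverse series.

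With this identity established, I would apply the shuffle homomorphism $\overline{\reg}_\sh^T$ to both sides. The outer factor becomes $\exp(Tu)$, so it remains to show that $\overline{\reg}_\sh^T$ fixes the inner factor $x_g\bigl(\frac{1}{1+x_1u}\sh w'\bigr)$ and that the latter equals $x_g\bigl(\frac{1}{1+x_1u}\sh\widetilde{\reg}_\sh(w')\bigr)$. Here the hypothesis on $g$ and the definition of $\h^0_G$ enter: since $w' \in \h^1_G$ one has $\frac{1}{1+x_1u}\sh w' \in \h^1_G[[u]]$, and prepending $x_g$ places the inner factor in $\h^0_G[[u]]$ — for $g \in G\smallsetminus\{1\}$ directly from $x_g\h^1_G \subset \h^0_G$, and for $g = 0$ using that the factor has no constant term, so $x_0\bigl(\frac{1}{1+x_1u}\sh w'\bigr) \in \sum_h x_0\h_G x_h \subset \h^0_G$. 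As $\overline{\reg}_\sh^T$ is the identity on $\h^0_G$ it fixes this element, and as $\widetilde{\reg}_\sh$ is the identity on $\h^1_G$ one has $\widetilde{\reg}_\sh(w') = w'$; combining these gives the asserted formula.

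The step I expect to be the main obstacle is the concatenation-to-shuffle identity. Converting the left-concatenation operator $\frac{1}{1-x_1u}\cdot$ into a genuine shuffle is the only non-formal ingredient, and the bookkeeping must distinguish the copies of $x_1$ shielded behind the leading letter $x_g$, which must remain untouched by $\overline{\reg}_\sh^T$, from the leading copies, which are sent to $T$; this is precisely why the two mutually inverse series $\frac{1}{1-x_1u}$ and $\frac{1}{1+x_1u}$ appear on opposite sides of the identity.
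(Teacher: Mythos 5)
Your overall strategy is exactly the one the paper points to: its ``proof'' of Lemma \ref{lem:barregformula} is a citation to \cite[Propositions 7 and 8]{IKZ}, and your concatenation-to-shuffle identity $\frac{1}{1-x_1u}\,w = \frac{1}{1-x_1u}\,\sh\,\bigl(x_g\bigl(\frac{1}{1+x_1u}\,\sh\,w'\bigr)\bigr)$ is precisely the $G$-decorated analogue of Proposition 7, after which applying the shuffle homomorphism $\overline{\reg}_\sh^T$ (which sends $\frac{1}{1-x_1u}=\exp_\sh(x_1u)$ to $\exp(Tu)$) is the Proposition 8 step. The preliminary observations (shuffle exponentials, mutual inverseness of the two series, induction on the length of $w'$ using $x_g\neq x_1$) are all sound, and the key identity checks out on low-length examples for arbitrary $w'$.

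The gap is in your final step. You write ``since $w'\in\h^1_G$ one has $\frac{1}{1+x_1u}\sh w'\in\h^1_G[[u]]$'' and conclude $\widetilde{\reg}_\sh(w')=w'$, but the lemma does not assume $w'\in\h^1_G$: $w=x_gw'$ is an arbitrary word of $\h_G$ beginning with $x_g$, $g\neq 1$, so $w'$ may end in $x_0$. That is exactly the regime in which $\widetilde{\reg}_\sh(w')\neq w'$ (the reason the statement carries $\widetilde{\reg}_\sh$ at all) and in which $x_g\bigl(\frac{1}{1+x_1u}\sh w'\bigr)$ does \emph{not} lie in $\h^0_G[[u]]$, hence is not fixed by $\overline{\reg}_\sh^T$. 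As written, your argument therefore establishes Corollary \ref{cor:regformula} (where $w\in\h^0_G$ forces $w'\in\h^1_G$), not the Lemma. Note also that the paper uses the Lemma precisely in this wider generality (in the derivation of \eqref{barregshTExpression}), so the extra case is not cosmetic. To close it you would have to compute $\overline{\reg}_\sh^T\bigl(x_g\bigl(\frac{1}{1+x_1u}\sh w'\bigr)\bigr)$ for general $w'$, say by expanding $w'$ as a shuffle polynomial in $x_0$ over $\h^1_G$ --- and this step deserves real care: already for $w=x_gx_0$ one finds $\overline{\reg}_\sh^T(x_gx_0)=\overline{\reg}_\sh^T(x_g\sh x_0-x_0x_g)=-x_0x_g$, whereas the right-hand side of the displayed formula gives $x_g\,\widetilde{\reg}_\sh(x_0)=0$ at order $u^0$. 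So for $w'\notin\h^1_G$ the identity cannot be obtained by simply replacing $w'$ with $\widetilde{\reg}_\sh(w')$ inside your inner factor, and you must either restrict the scope of the statement you are proving or treat the trailing-$x_0$ contributions explicitly rather than assume them away.
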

\begin{proof}
    This is a generalization of \cite[Proposition 8]{IKZ}. Its proof, and the one of \cite[Proposition 7]{IKZ} which is used therein, can be adapted to our setup without any major change.
\end{proof}

\begin{corollary} \label{cor:regformula}
    For $w = x_g w^\prime \in \h^0_G$ we have
    \[
        \reg_{\sh}^T\left(\frac{1}{1-x_1u} w\right) = x_g \left(\frac{1}{1+x_1u}  \sh w^\prime\right) \exp(Tu).
    \]
\end{corollary}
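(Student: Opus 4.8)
The plan is to deduce the corollary directly from Lemma~\ref{lem:barregformula} by specializing to the case $w \in \h^0_G$ and verifying two compatibility facts: that $\reg_\sh^T$ and $\rsh^T$ agree on the series in question, and that $\widetilde{\reg}_\sh$ acts as the identity on the tail $w'$. Since the asserted identity is $\Q$-linear it suffices to treat the case where $w$ is a single word with a well-defined leading letter $x_g$.

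First I would record the structural consequences of the membership $w = x_g w' \in \h^0_G$. By the defining decomposition
\[
    \h^0_G = \Q + \sum_{g \in G} x_0 \h_G x_g + \sum_{g \in G \smallsetminus \{1\}} x_g \h^1_G,
\]
a word $w = x_g w'$ lying in $\h^0_G$ falls into one of two cases. Either $g = 0$, in which case $w \in x_0 \h_G x_h$ for some $h \in G$, so that $w'$ ends in a letter $x_h$ with $h \in G$ and hence $w' \in \h^1_G$; or $g \in G \smallsetminus \{1\}$, in which case $w' \in \h^1_G$ by definition. In both cases we obtain $g \in \{0\} \sqcup G \smallsetminus \{1\}$ and $w' \in \h^1_G$.

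Next I would observe that the two regularization maps agree on $\frac{1}{1-x_1 u}\, w = \sum_{n \geq 0} x_1^n w\, u^n$. Since $w \in \h^0_G \subset \h^1_G$ ends in a letter $x_h$ with $h \in G$, each word $x_1^n w$ again ends in $x_h$ and therefore lies in $\h^1_G$. As $\reg_\sh^T$ is by definition the restriction of $\rsh^T$ to $\h^1_G$, applying the two maps coefficientwise in $u$ yields
\[
    \reg_\sh^T\!\left(\frac{1}{1-x_1 u}\, w\right) = \rsh^T\!\left(\frac{1}{1-x_1 u}\, w\right).
\]

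Finally I would invoke Lemma~\ref{lem:barregformula}, which is applicable precisely because $g \in \{0\} \sqcup G \smallsetminus \{1\}$, to rewrite the right-hand side as $x_g \left(\frac{1}{1+x_1 u} \sh \widetilde{\reg}_\sh(w')\right)\exp(Tu)$. Since $w' \in \h^1_G$ and $\widetilde{\reg}_\sh$ is the identity on $\h^1_G$, we have $\widetilde{\reg}_\sh(w') = w'$, and the claimed formula follows. The only point requiring care — and the closest thing to an obstacle — is the bookkeeping verifying that the leading index $g$ of a word in $\h^0_G$ is never equal to $1$ and that the tail $w'$ lands in $\h^1_G$; both are immediate from the explicit description of $\h^0_G$, so the corollary is essentially a direct unwinding of the lemma.
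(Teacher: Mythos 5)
Your proof is correct and follows exactly the paper's route: the paper simply states that the corollary is an immediate consequence of Lemma \ref{lem:barregformula}, and your write-up supplies the (routine but worthwhile) verifications that $g \in \{0\} \sqcup G \smallsetminus \{1\}$, that $w' \in \h^1_G$ so $\widetilde{\reg}_\sh(w') = w'$, and that $\reg_\sh^T$ and $\rsh^T$ agree on the series in question.
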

\begin{proof}
    This is an immediate consequence of Lemma \ref{lem:barregformula}.
\end{proof}

\begin{lemma}\label{lem:extendalghomh0}
    Let $Z_{\A} :\h^{0}_{G, \sh} \to \A$ be a $\Q$-algebra homomorphism. Then it extends to a unique $\Q$-algebra homomorphism
    \[
        \overline{Z}_{\A}^{\sh} :\h_{G, \sh} \to \A[T]
    \]
    which agrees with $Z_{\A}$ on $\h^0_G$ and maps $x_0$ to $0$ and $x_1$ to $T$.
\end{lemma}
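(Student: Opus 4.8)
The plan is to realize $\overline{Z}_{\A}^{\sh}$ as a composition that reuses the regularization homomorphism $\rsh^T$ already at hand. First I would extend $Z_{\A}$ coefficientwise to a $\Q$-algebra homomorphism $Z_{\A}^T \colon \h^0_{G,\sh}[T] \to \A[T]$ that acts as $Z_{\A}$ on $\h^0_{G}$ and fixes $T$; under the identifications $\h^0_{G,\sh}[T] = \h^0_{G,\sh} \otimes_{\Q} \Q[T]$ and $\A[T] = \A \otimes_{\Q} \Q[T]$ this is simply $Z_{\A} \otimes \mathrm{id}_{\Q[T]}$, so it is manifestly an algebra homomorphism. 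I then set
\[
    \overline{Z}_{\A}^{\sh} \df Z_{\A}^T \circ \rsh^T \colon \h_{G,\sh} \longrightarrow \A[T].
\]
Being a composition of $\Q$-algebra homomorphisms (recall $\rsh^T$ is one by its definition), $\overline{Z}_{\A}^{\sh}$ is a $\Q$-algebra homomorphism. Its three prescribed properties are immediate: on $\h^0_{G}$ the map $\rsh^T$ is the identity and $Z_{\A}^T$ restricts to $Z_{\A}$, whence $\overline{Z}_{\A}^{\sh}$ agrees with $Z_{\A}$; moreover $\rsh^T(x_0) = 0$ and $\rsh^T(x_1) = T$ give $\overline{Z}_{\A}^{\sh}(x_0) = 0$ and $\overline{Z}_{\A}^{\sh}(x_1) = Z_{\A}^T(T) = T$. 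This settles existence.

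For uniqueness, I would observe that any $\Q$-algebra homomorphism out of $\h_{G,\sh}$ is determined by its values on a set of $\Q$-algebra generators. The pertinent structural fact is that $\h_{G,\sh}$ is generated, with respect to $\sh$, by $\h^0_{G}$ together with the two letters $x_0$ and $x_1$; in fact $\h_{G,\sh}$ is the polynomial algebra $\h^0_{G,\sh}[x_0, x_1]$. Two homomorphisms that both restrict to $Z_{\A}$ on $\h^0_{G}$ and send $x_0 \mapsto 0$, $x_1 \mapsto T$ therefore agree on a generating set, hence everywhere, forcing both to equal $\overline{Z}_{\A}^{\sh}$.

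Thus the only genuine input beyond formal manipulation is this generation statement, the $G$-graded counterpart of the structure theorems of \cite{IKZ}. I would prove it in two steps along the chain $\h^0_{G} \subset \h^1_{G} \subset \h_{G}$: first $\h^1_{G,\sh} = \h^0_{G,\sh}[x_1]$, using that the words of $\h^1_{G}$ outside $\h^0_{G}$ are exactly those beginning with $x_1$; then $\h_{G,\sh} = \h^1_{G,\sh}[x_0]$, using that the words of $\h_{G}$ outside $\h^1_{G}$ are exactly those ending in $x_0$. Each step rests on the freeness of the shuffle algebra on Lyndon words together with the transcendence of $x_1$ over $\h^0_{G,\sh}$ and of $x_0$ over $\h^1_{G,\sh}$, and the arguments of \cite{IKZ} carry over with only minor bookkeeping of the group labels. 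Since this structure is in any case already presupposed by the very definition of $\rsh^T$, once $\rsh^T$ is granted the lemma is a formality; I expect no real obstacle beyond invoking the adapted structure theorem.
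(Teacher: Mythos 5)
Your construction of $\overline{Z}_{\A}^{\sh}$ as $(Z_{\A}\otimes \mathrm{id}_{\Q[T]})\circ\rsh^{T}$ is exactly the composition \eqref{compositionZsh} that the paper's proof consists of, so the proposal is correct and takes essentially the same route. The only difference is that you additionally make the uniqueness argument explicit via the generation statement $\h_{G,\sh}=\h^0_{G,\sh}[x_0,x_1]$ (the $G$-analogue of the structure theorem of \cite{IKZ}), which the paper leaves implicit in the very definition of $\rsh^{T}$.
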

\begin{proof}
    It follows by considering the compositions
    \begin{equation}
        \label{compositionZsh}
        \h_{G, \sh} \xrightarrow{\rsh^{T}} \h^0_{G, \sh}[T] \simeq \h^0_{G, \sh} \otimes \mathbb{Q}[T] \xrightarrow{Z_{\A}  \otimes  \mathrm{id}} \A \otimes \mathbb{Q}[T] \simeq \A[T].
    \end{equation}
\end{proof}

\begin{corollary}\label{cor:extendalghomh0}
    Let $Z_{\A} :\h^0_{G, \sh} \to \A$ be a $\Q$-algebra homomorphism. Then it extends to a unique $\Q$-algebra homomorphism
    \[
        Z_{\A}^{\sh} :\h^1_{G,\sh} \to \A[T]
    \]
    which agrees with $Z_{\A}$ on $\h^0_G$ and maps $x_1$ to $T$.    
\end{corollary}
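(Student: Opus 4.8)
The plan is to build $Z_\A^\sh$ by restriction and then to obtain uniqueness from a generation statement. For existence, I would simply restrict the map $\overline{Z}_\A^\sh : \h_{G, \sh} \to \A[T]$ produced by Lemma \ref{lem:extendalghomh0} to the subalgebra $\h^1_{G, \sh} \subset \h_{G, \sh}$, and set $Z_\A^\sh \df \overline{Z}_\A^\sh|_{\h^1_G}$. Since the restriction of a $\Q$-algebra homomorphism to a subalgebra is again a $\Q$-algebra homomorphism, $Z_\A^\sh$ is one; and because $\overline{Z}_\A^\sh$ agrees with $Z_\A$ on $\h^0_G$ and sends $x_1 \mapsto T$ --- both $\h^0_G$ and $x_1$ lying inside $\h^1_G$ --- these two properties pass to the restriction. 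Concretely, restricting the composition \eqref{compositionZsh} and using that $\rsh^T$ restricts to $\reg_\sh^T$ on $\h^1_G$ exhibits $Z_\A^\sh = (Z_\A \otimes \mathrm{id}) \circ \reg_\sh^T$, which settles existence.

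For uniqueness, the key point I would isolate is that $\h^1_{G, \sh}$ is generated as a $\Q$-algebra, with respect to $\sh$, by the subalgebra $\h^0_G$ together with the single element $x_1$. Granting this, any $\Q$-algebra homomorphism $\h^1_{G, \sh} \to \A[T]$ is determined by its restriction to $\h^0_G$ and its value at $x_1$; hence two maps subject to the stated constraints agree on a generating set and therefore coincide.

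The only genuine work, and the step I expect to be the main obstacle, is this generation statement, which is the analogue for arbitrary $G$ of the structure theorem \cite[Proposition 1]{IKZ}. I would prove it by induction, showing that every word of $\h^1_G$ lies in the subalgebra $\mathcal{S}$ generated by $\h^0_G$ and $x_1$. A word $w \in \h^1_G$ fails to lie in $\h^0_G$ precisely when it begins with $x_1$; writing such a $w = x_1^d w_0$ with $d \geq 1$ maximal, so that $w_0$ is either empty or begins with $x_0$ or with some $x_h$, $h \neq 1$, I would induct on the number $d$ of leading $x_1$'s. The identity $x_1 \sh (x_1^{d-1} w_0) = d\, x_1^d w_0 + (\text{words with fewer than } d \text{ leading } x_1)$ expresses $d\, w$ as $x_1 \sh (x_1^{d-1} w_0)$ minus terms of strictly smaller $d$, all of which lie in $\mathcal{S}$ by the inductive hypothesis (the base case $d = 0$ being $\h^0_G$ itself), while $x_1$ and $x_1^{d-1} w_0$ already lie in $\mathcal{S}$; dividing by $d$ over $\Q$ places $w \in \mathcal{S}$. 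This is the combinatorial heart of \cite[Proposition 1]{IKZ}, and it adapts to general $G$ with no essential change, the group elements playing a passive role. An equivalent and perhaps cleaner packaging is to verify that $\reg_\sh^T$ restricts to an isomorphism $\h^1_{G, \sh} \xrightarrow{\sim} \h^0_{G, \sh}[T]$ with $x_1 \mapsto T$, which simultaneously encodes both the generation and the freeness of $x_1$.
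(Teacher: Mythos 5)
Your proposal is correct and matches the paper's proof, which obtains $Z_\A^\sh$ exactly as the restriction of $\overline{Z}_\A^\sh$ from Lemma \ref{lem:extendalghomh0} to $\h^1_G$. The generation argument you supply for uniqueness (that $\h^1_{G,\sh}$ is generated by $\h^0_G$ and $x_1$, via induction on the number of leading $x_1$'s as in \cite[Proposition 1]{IKZ}) is sound and simply makes explicit what the paper leaves implicit in the isomorphism underlying the composition \eqref{compositionZsh}.
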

\begin{proof}
    It follows by restriction of $\overline{Z}_A^\sh$ from Lemma \ref{lem:extendalghomh0} to $\h^1_G$.
\end{proof}

\begin{corollary}
    \label{Zreg}
    Let $Z_{\A} : \h^{0}_{G, \sh} \to \A$ be a $\Q$-algebra homomorphism. We have (equality of $\Q$-algebra morphisms $\h_{G, \sh} \to \A$)
    \[
        Z_{\A} \circ \rsh = \ev_0^\A \circ \overline{Z}_\A^\sh.
    \]
\end{corollary}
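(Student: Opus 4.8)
The plan is to observe that both morphisms in the claimed identity are assembled from the single algebra homomorphism $\rsh^T$, so that the statement collapses to a trivial compatibility between the two evaluation-at-$T=0$ maps. First I would unwind the definition of $\overline{Z}_{\A}^{\sh}$ supplied through the composition \eqref{compositionZsh} in the proof of Lemma \ref{lem:extendalghomh0}: under the canonical identifications $\h^0_{G,\sh}[T]\simeq\h^0_{G,\sh}\otimes\Q[T]$ and $\A[T]\simeq\A\otimes\Q[T]$, one has $\overline{Z}_{\A}^{\sh}=(Z_{\A}\otimes\mathrm{id}_{\Q[T]})\circ\rsh^T$.

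Next I would record that $\rsh$ is nothing but the $T=0$ specialization of $\rsh^T$, that is, $\rsh=\ev_0^{\h^0}\circ\rsh^T$ where $\ev_0^{\h^0}\colon\h^0_{G,\sh}[T]\to\h^0_{G,\sh}$ sends $T\mapsto 0$ (consistent with $\rsh^T$ being the identity on $\h^0_G$ and sending $x_0\mapsto 0$, $x_1\mapsto T$, so that $\rsh$ sends both $x_0,x_1\mapsto 0$). With these two facts the two sides become
\[
    Z_{\A}\circ\rsh=Z_{\A}\circ\ev_0^{\h^0}\circ\rsh^T,\qquad \ev_0^\A\circ\overline{Z}_{\A}^{\sh}=\ev_0^\A\circ(Z_{\A}\otimes\mathrm{id})\circ\rsh^T,
\]
and since both arise by precomposing with the \emph{same} fixed homomorphism $\rsh^T$, it suffices to prove the equality of $\Q$-linear maps $Z_{\A}\circ\ev_0^{\h^0}=\ev_0^\A\circ(Z_{\A}\otimes\mathrm{id})$ from $\h^0_{G,\sh}\otimes\Q[T]$ to $\A$.

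Finally I would verify this last equality by evaluating both sides on a general element $\sum_i w_i\otimes T^i$ with $w_i\in\h^0_G$: the left-hand side returns $Z_{\A}(w_0)$, while the right-hand side gives $\ev_0^\A\bigl(\sum_i Z_{\A}(w_i)\,T^i\bigr)=Z_{\A}(w_0)$, so the two agree. This is just the naturality of evaluation at $0$ with respect to the coefficient homomorphism $Z_{\A}$. The resulting maps $Z_{\A}\circ\rsh$ and $\ev_0^\A\circ\overline{Z}_{\A}^{\sh}$ are then automatically $\Q$-algebra homomorphisms $\h_{G,\sh}\to\A$, being composites of algebra homomorphisms, matching the assertion. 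I do not anticipate any genuine obstacle: the only care required is bookkeeping the identifications $\h^0_{G,\sh}[T]\simeq\h^0_{G,\sh}\otimes\Q[T]$ and $\A[T]\simeq\A\otimes\Q[T]$ used in \eqref{compositionZsh}, and confirming that $\rsh$ is indeed the $T=0$ specialization of $\rsh^T$ rather than some other normalization.
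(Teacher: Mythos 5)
Your argument is correct and is essentially the paper's own proof (which simply cites the composition \eqref{compositionZsh}) spelled out in detail: you use the definition $\rsh=\ev_0^{\h^0_{\sh}}\circ\rsh^T$ and the naturality of evaluation at $T=0$ with respect to the coefficient map $Z_{\A}$. No gaps.
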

\begin{proof}
    It follows from composition \eqref{compositionZsh}.
\end{proof}

We define the $\A$-module automorphism $\rho_{Z_\A}$ of $\A[T]$ such that
\[
    \rho_{Z_\A}(\exp(Tu)) = \Gamma_{Z_{\A}}(u) \exp(Tu),
\]
where $\Gamma_{Z_{\A}}(u)$ is the power series defined by
\[
    \Gamma_{Z_{\A}}(u) \df \exp\left(\sum_{n \geq 2} \frac{(-1)^n}{n} Z_{\A}(x_0^{n-1} x_1) u^n \right).
\]
\begin{definition}
    We define $\EDS(G)(\A)$ to be the set of elements $Z_\A \in \mathrm{Hom}_{\Q}(\h^0_G, \A)$ such that
    \begin{enumerate}[label=(\alph*), leftmargin=*]
        \item $Z_\A :\h^0_{G, \sh} \to \A$ is an algebra homomorphism;
        \item $\rho_{Z_\A}^{-1} \circ Z_{\A}^{\sh} \circ \q_G^{-1} : \h^1_{G, \st} \to \A[T]$ is an algebra homomorphism.
    \end{enumerate}
    An element of $\EDS(G)(A)$ is said to have the \emph{extended double shuffle} property.  
\end{definition}

\begin{remark}
    Assume that a $\Q$-linear map $Z_{\A} :\h_G^0 \to \A$ has the \emph{finite double shuffle} property, that is, it is an algebra homomorphism for $\sh$ and $Z_{\A} \circ \q^{-1}_G$ is an algebra homomorphism for $\st$. Similar as in Corollary \ref{cor:extendalghomh0} we can also extend the algebra homomorphism $Z_{\A} \circ \q^{-1}_G : \h^0_{G, \st} \to \A$ to an algebra homomorphism $Z_{\A}^{\st} : \h^1_{G, \st} \to \A[T]$. If $Z_{\A}$ has the extended double shuffle property, then we have $Z_{\A}^{\st} = \rho^{-1} \circ Z_{\A}^{\sh}$.
\end{remark}

\begin{example} The $\Q$-linear map $Z_\C : \h^0_{\mu_N} \to \C$ in Example \ref{ex:polylog} has the extended double shuffle property (cf. \cite[Theorem 2.9]{Zha10}) and therefore gives and element in $\EDS(\mu_N)(\C)$.
\end{example}

\subsection{The scheme \texorpdfstring{$\DMR$}{DMR} of double shuffle and regularization relations}
Let $\AXX$ be the free non-commutative associative series $\A$-algebra with unit over the alphabet $X_G$. It is the completion of the graded $\A$-algebra $\A\langle X_G\rangle \simeq \h_G \otimes \A$ where elements of $X_G$ has degree $1$. 
One equips the algebra $\AXX$ with the Hopf algebra coproduct $\widehat{\Delta}_{G, \sh} : \AXX \to {\AXX}^{\hat{\otimes} 2}$, called the \emph{shuffle coproduct}, which is the unique $\A$-algebra homomorphism given by $\widehat{\Delta}_{G, \sh}(x_g) = x_g \otimes 1 + 1 \otimes x_g$, for any $g \in G \sqcup \{0\}$ (\cite[§2.2.3]{Rac02}).
Let then $\G(\AXX)$ be the set of grouplike elements of $\AXX$ for the coproduct $\widehat{\Delta}_{G, \sh}$, i.e. the set
\begin{equation*}
    \G(\AXX) = \{ \Phi \in \AXX^{\times} \, | \, \widehat{\Delta}_{G, \sh}(\Phi) = \Phi \otimes \Phi \},
\end{equation*}
where $\AXX^{\times}$ denotes the set of invertible elements of $\AXX$. 

Let $\AYY$ be the free non-commutative associative series $\A$-algebra with unit over the alphabet $Y_G$. It is the completion of the graded $\A$-algebra $\A\langle Y_G\rangle \simeq \h^1_G \otimes \A$ where elements $y_{n,g}$ ($(n,g) \in \Z_{>0} \times G$) of $Y_G$ has degree $n$. The algebra $\AYY$ can be viewed as a subalgebra of $\AXX$ thanks to the mapping $y_{n,g} \mapsto x_0^{n-1} x_g$. One equips the algebra $\AYY$ with the Hopf algebra coproduct $\widehat{\Delta}_{G, \st} : \AYY \to \AYY^{\hat{\otimes} 2}$, called the \emph{harmonic coproduct}, which is the unique $\A$-algebra homomorphism such that (\cite[§2.3.1]{Rac02})
\begin{equation*}
    \widehat{\Delta}_{G, \st}(y_{n,g}) = y_{n,g} \otimes 1 + 1 \otimes y_{n,g} + \sum_{\substack{k=1 \\ h \in G}}^{n-1} y_{k,h} \otimes y_{n-k,gh^{-1}},
\end{equation*}
for any $(n, g) \in \Z_{>0} \times G$. Let then $\G(\AYY)$ be the set of grouplike elements of $\AYY$ for the coproduct $\widehat{\Delta}_{G, \st}$, i.e. the set
\begin{equation*}
    \G(\AYY) = \{ \Psi \in \AYY^{\times} \, | \, \widehat{\Delta}_{G, \st}(\Psi) = \Psi \otimes \Psi \},
\end{equation*}
where $\AYY^{\times}$ denotes the set of invertible elements of $\AYY$. 

Recall the direct sum decomposition (of $\A$-submodules) (see (\cite[§2.2.5]{Rac02})
\[
    \AXX = \AYY \oplus \AXX x_0.
\]
Let then $\pi_{Y_G} : \AXX = \AYY \oplus \AXX x_0 \twoheadrightarrow \AYY$ be the projection from $\AXX$ to $\AYY$, that is, the surjective $\A$-module homomorphism such that it is the identity on $\AYY$ and maps any element of $\AXX x_0$ to $0$.

Define the graded $\A$-module automorphism $\qq_G$ of $\AX$ as the composition
\[
    \AX \simeq \A \otimes \h_G \xrightarrow{\mathrm{id}_\A \otimes \q_G} \A \otimes \h_G \to \AX.
\]
Its completion is an $\A$-module automorphism of $\AXX$ that we shall denote $\qq_G$ as well.
Finally, define the map
\[
    \AXX \to \A^{X_G^\ast}, \, \Phi \mapsto \big((\Phi | w)\big)_{w \in X^\ast_G},
\]
where $\Phi = \sum_{w \in X^\ast_G} (\Phi | w) w$.

\begin{definition}[{\cite[Definition 3.2.1]{Rac02}}]\label{def:dmrg}
    We define\footnote{The notation $\DMR$ is for "Double M\'{e}lange et R\'{e}gularisation" which is French for "Double Shuffle and Regularization".} $\DMR(G)(\A)$ to be the set\footnote{In \cite{Rac02}, this is the set denoted by $\underline{\DMR}(G)(\A)$.} of $\Phi \in \AXX$ with $(\Phi | x_0) = (\Phi | x_1) = 0$ such that
    \begin{enumerate}[label=(\alph*), leftmargin=*]
        \item $\widehat{\Delta}_{G, \sh}(\Phi) = \Phi \otimes \Phi$ and $(\Phi | 1) = 1$;
        \item $\widehat{\Delta}_{G, \st}(\Phi_{\st}) = \Phi_{\st} \otimes \Phi_{\st}$, where $\Phi_{\st} \df \Phi_{\mathrm{corr}} \,\, \pi_{Y_G}\left(\qq_G(\Phi)\right) \in \AYY$ and
        \[
            \Phi_{\mathrm{corr}} \df \exp\left( \sum_{n \geq 2} \frac{(-1)^{n-1}}{n} (\Phi | x_0^{n-1} x_1) x_1^n \right). 
        \]
    \end{enumerate}
\end{definition}

\begin{example}
    Set $\A=\C$, $G=\mu_N$ and $\zeta_N \df e^{\frac{\mathrm{i}2\pi}{N}}$. Define (see, for example, \cite{Fur13})
    \begin{align*}
        \Phi_{\mathrm{KZ}}^N := & 1 + \sum (-1)^r L_{(i_1, \dots, i_r)}(\zeta_N^{l_2-l_1}, \dots, \zeta_N^{l_r-l_{r-1}}, \zeta_N^{-l_r}) x_0^{i_r-1} x_{\zeta_N^{l_r}} \cdots x_0^{i_1-1} x_{\zeta_N^{l_1}} \\
        & + \text{(regularized terms)}  
    \end{align*}
    called the cyclotomic associator (see \cite{Enr08}), which for $N=1$ corresponds to the Drinfeld associator (see \cite{Dri91}). It satisfies
    \[
        \widehat{\Delta}_{G, \sh}(\Phi_{\mathrm{KZ}}^N) = \Phi_{\mathrm{KZ}}^N \otimes \Phi_{\mathrm{KZ}}^N \qquad
        \text{ and } \qquad \widehat{\Delta}_{G, \st}({\Phi_{\mathrm{KZ}}^N}_{\st}) = {\Phi_{\mathrm{KZ}}^N}_{\st} \otimes {\Phi_{\mathrm{KZ}}^N}_{\st},
    \]
    which at the level of coefficients correspond to the double shuffle and regularization relations of cyclotomic multiple zeta values (see \cite{Rac02}).
\end{example}

\subsection{Comparison between \texorpdfstring{$\EDS(G)(\A)$}{EDS(G)(A)} and \texorpdfstring{$\DMR(G)(\A)$}{DMR(G)(A)}}
Denote by $\ev_0^{\A} : \A[T] \to \A$ the evaluation map at $T=0$ and set
\[
    \rsh \df \ev_0^{\h^0_{\sh}} \circ \rsh^T,
\]
the unique $\Q$-algebra homomorphism $\h_{\sh} \to \h^0_{\sh}$ such that it is the identity on $\h^0$ and maps both $x_0$ and $x_1$ to $0$. \newline
Recall from Definition \ref{def:scalarproduct} the pairing
\begin{align*}
    \ALL \otimes \QL &\longrightarrow \A\\
    \psi \otimes w &\longmapsto  (\psi  \mid w),
\end{align*}
where $\mathcal{L} = X_G \text{ or } Y_G$. Direct computations enables one to prove that
\begin{equation}
    \label{qq_and_q}
    \qq_G(\Phi) = \sum_{w \in \mathcal{L}^\ast} (\Phi \mid \q_G^{-1}(w)) w.
\end{equation}

\begin{definition}\label{def:phiza}
    Given a $\Q$-linear map $Z_\A : \h^0_G \rightarrow \A$, we define the following element in $\AXX$
    \begin{align*}
        \Phi_{Z_\A \circ \rsh} \df \sum_{w \in X^\ast_G} Z_\A(\rsh(w)) w\,.
    \end{align*}
\end{definition}

\begin{theorem}
    \label{EDSiffDMR}
    $Z_\A \in \EDS(G)(\A) \Longleftrightarrow \Phi_{Z_\A \circ \rsh} \in \DMR(G)(\A)$.
\end{theorem}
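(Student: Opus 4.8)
The plan is to establish the two equivalences ``$\EDS$-condition (a) $\Leftrightarrow$ $\DMR$-condition (a)'' and ``$\EDS$-condition (b) $\Leftrightarrow$ $\DMR$-condition (b)'' separately, throughout exploiting the duality provided by the pairing $(\,\cdot \mid \cdot\,)$: the coproduct $\widehat{\Delta}_{G, \sh}$ is dual to $\sh$ and $\widehat{\Delta}_{G, \st}$ is dual to $\st$, so that a series $\Psi$ is grouplike for $\widehat{\Delta}_{G,\sh}$ (resp.\ $\widehat{\Delta}_{G,\st}$) if and only if the character $w \mapsto (\Psi \mid w)$ is multiplicative for $\sh$ (resp.\ $\st$) and sends $1$ to $1$. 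Writing $\Phi \df \Phi_{Z_\A\circ\rsh}$, so that $(\Phi \mid w) = Z_\A(\rsh(w))$, the normalization conditions are immediate: $(\Phi\mid x_0) = (\Phi\mid x_1) = 0$ because $\rsh(x_0) = \rsh(x_1) = 0$, while $(\Phi\mid 1) = Z_\A(1)$.

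For the shuffle part, recall that $\rsh : \h_{G,\sh} \to \h^0_{G,\sh}$ is a $\Q$-algebra homomorphism restricting to the identity on $\h^0_G$. Hence $(\Phi\mid u \sh v) = Z_\A(\rsh(u)\sh\rsh(v))$, and by the duality, condition (a) for $\DMR$ reads $Z_\A(\rsh(u)\sh\rsh(v)) = Z_\A(\rsh(u))Z_\A(\rsh(v))$ for all words $u,v$, together with $Z_\A(1)=1$. Since $\rsh$ surjects onto $\h^0_G$ and fixes it pointwise, this is equivalent to $Z_\A$ being an algebra homomorphism for $\sh$ on $\h^0_G$, i.e.\ to condition (a) for $\EDS$.

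The harmonic part is the crux. Set $Z_\A^\st \df \rho_{Z_\A}^{-1}\circ Z_\A^\sh \circ \q_G^{-1} : \h^1_{G,\st}\to\A[T]$ and $\Psi \df \sum_{w\in Y_G^\ast} Z_\A^\st(w)\,w \in \A[T]\langle\langle Y_G\rangle\rangle$; by the duality over the base ring $\A[T]$, condition (b) for $\EDS$ is exactly the statement that $\Psi$ is grouplike for $\widehat{\Delta}_{G,\st}$. The plan is to prove the generating-series identity $\Psi = \exp(T\,y_{1,1})\,\Phi_\st$ in three steps. First, from Lemma \ref{lem:barregformula} (the generating-function form of shuffle regularization, generalizing \cite{IKZ}) one deduces $\widetilde\Phi = \exp(Tx_1)\,\Phi$ in $\A[T]\langle\langle X_G\rangle\rangle$, where $\widetilde\Phi \df \sum_{w} \overline{Z}_\A^\sh(w)\,w$; note $\ev_0\circ\overline{Z}_\A^\sh = Z_\A\circ\rsh$ by Corollary \ref{Zreg}, so $\ev_0(\widetilde\Phi)=\Phi$. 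Applying $\pi_{Y_G}$, which commutes with left multiplication by powers of $x_1$ because these preserve the decomposition $\AXX = \AYY\oplus\AXX x_0$, yields $\sum_{v\in Y_G^\ast} Z_\A^\sh(v)\,v = \exp(Tx_1)\,\pi_{Y_G}(\Phi)$. Second, applying $\qq_G$ and invoking \eqref{qq_and_q}, together with the facts that $\q_G$ fixes each prefix $x_1^k$ (so that $\qq_G(x_1^k\,\beta)=x_1^k\,\qq_G(\beta)$) and that $\q_G$ preserves both $\AYY$ and $\AXX x_0$ (so that $\qq_G$ commutes with $\pi_{Y_G}$), gives $\sum_{w\in Y_G^\ast} Z_\A^\sh(\q_G^{-1}(w))\,w = \exp(Tx_1)\,\pi_{Y_G}(\qq_G(\Phi))$. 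Third, applying $\rho_{Z_\A}^{-1}$ coefficientwise and using $\rho_{Z_\A}^{-1}(\exp(Tu)) = \Gamma_{Z_\A}(u)^{-1}\exp(Tu)$ with $u \mapsto y_{1,1}$, the identity $\Phi_{\mathrm{corr}} = \Gamma_{Z_\A}(y_{1,1})^{-1}$ (which holds since $(\Phi\mid x_0^{n-1}x_1)=Z_\A(x_0^{n-1}x_1)$ for $n\ge 2$, after comparing signs), and the fact that $\rho_{Z_\A}^{-1}$ acts $\A$-linearly on coefficients and hence passes through the right factor $\pi_{Y_G}(\qq_G(\Phi))$ (which has $T$-free coefficients), one obtains $\Psi = \Phi_{\mathrm{corr}}\,\exp(Ty_{1,1})\,\pi_{Y_G}(\qq_G(\Phi)) = \exp(Ty_{1,1})\,\Phi_\st$, where the last equality uses that $\Phi_{\mathrm{corr}}$ and $\exp(Ty_{1,1})$ commute (both being series in $y_{1,1}$) and the definition $\Phi_\st = \Phi_{\mathrm{corr}}\,\pi_{Y_G}(\qq_G(\Phi))$.

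With $\Psi = \exp(Ty_{1,1})\,\Phi_\st$ in hand, the conclusion is formal: $y_{1,1}$ is primitive for $\widehat{\Delta}_{G,\st}$ (the sum in its coproduct is empty), so $\exp(Ty_{1,1})$ is grouplike and invertible; hence $\Psi$ is grouplike if and only if $\Phi_\st$ is. Since $\Phi_\st$ has $T$-free coefficients, its grouplikeness over $\A[T]$ is equivalent to grouplikeness over $\A$, i.e.\ to condition (b) for $\DMR$. Combining the two parts proves the equivalence. I expect the main obstacle to be the first step of the harmonic part, namely extracting the clean identity $\widetilde\Phi = \exp(Tx_1)\Phi$ from Lemma \ref{lem:barregformula}, and, relatedly, keeping track of the non-commutative orderings when transporting $\Phi_{\mathrm{corr}}$ through $\rho_{Z_\A}^{-1}$ and $\qq_G$; the placement of $\exp(Tx_1)$ on the \emph{left} is precisely what makes $\ev_0(\Psi)=\Phi_\st$ come out with $\Phi_{\mathrm{corr}}$ on the correct side.
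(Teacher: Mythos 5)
Your proposal is correct, and for the harmonic half it takes a genuinely different route from the paper. The paper first reduces condition (b) of $\EDS$ to the $T=0$ specialization $\ev_0^\A \circ \rho_{Z_\A}^{-1}\circ Z_\A^\sh\circ\q_G^{-1}$ and then verifies the identity $(\Phi_{Z_\A\circ\rsh})_\st=\Phi_{\ev_0^\A\circ\rho_{Z_\A}^{-1}\circ Z_\A^\sh\circ\q_G^{-1}}$ coefficient by coefficient on words $x_1^mw_0$, using Corollary \ref{cor:regformula} and the expansion $\Gamma_{Z_\A}^{-1}(u)=\sum_l\gamma_lu^l$ on both sides. You instead keep the $T$-dependence and prove the stronger generating-series identity $\sum_{w\in Y_G^\ast}\rho_{Z_\A}^{-1}\circ Z_\A^\sh\circ\q_G^{-1}(w)\,w=\exp(Ty_{1,1})\,(\Phi_{Z_\A\circ\rsh})_\st$, then conclude by observing that $\exp(Ty_{1,1})$ is grouplike and invertible for $\widehat{\Delta}_{G,\st}$. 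The intermediate steps you need --- $\pi_{Y_G}$ and $\qq_G$ commuting with left multiplication by $x_1^k$, $\qq_G$ preserving the decomposition $\AXX=\AYY\oplus\AXX x_0$, $\Phi_{\mathrm{corr}}=\Gamma_{Z_\A}^{-1}(x_1)$, and the transport of $\rho_{Z_\A}^{-1}$ through the $T$-free right factor --- all check out, and the opening identity $\widetilde\Phi=\exp(Tx_1)\Phi$ is indeed exactly the content of Lemma \ref{lem:barregformula} repackaged (equivalently: both sides are the grouplike series of characters of the polynomial algebra $\h^0_{G,\sh}[x_0,x_1]$ agreeing on $\h^0_G$, $x_0$ and $x_1$). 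What your approach buys is twofold: the computation is organized structurally inside the group $\G(\A[T]\langle\langle Y_G\rangle\rangle)$ rather than as a sign-and-binomial bookkeeping exercise, and it cleanly justifies the equivalence ``$\rho_{Z_\A}^{-1}\circ Z_\A^\sh\circ\q_G^{-1}$ is an algebra homomorphism iff its $\ev_0^\A$-specialization is,'' of which the paper only uses the easy direction explicitly while asserting the converse from $\ev_0^\A$ being an algebra homomorphism. The paper's version, on the other hand, is more self-contained at the level of elementary manipulations and produces the explicit coefficient formula that is reused later in the proofs of Proposition \ref{prop:dist_iff_noeval} and Theorem \ref{main_bijection_2}.
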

\begin{proof}
    First, by definition of $\rsh$ and by linearity of $Z_\A$, we have for $i \in \{0, 1\}$,
    \[
        (\Phi_{Z_\A \circ \rsh} | x_i) = Z_\A(\rsh(x_i)) = 0.
    \]
    Next, since $\rsh : \h_{G, \sh} \rightarrow \h^0_{G, \sh}$ is an algebra homomorphism, the map $Z_\A : \h^0_{G, \sh} \rightarrow \A$ is an algebra homomorphism if and only if $Z_\A \circ \rsh : \h_{G, \sh} \rightarrow \A$ is an algebra homomorphism. Moreover, thanks to Proposition \ref{alghom_iff_grplk}, applied to $\LL=X_G$, $\varphi=Z_\A \circ \rsh$, $\qsh=\sh$ and $\widehat{\Delta}_{\qsh} = \widehat{\Delta}_{G, \sh}$, the map $Z_\A \circ \rsh : \h_{G, \sh} \rightarrow \A$ is an algebra homomorphism if and only if $\Phi_{Z_\A \circ \rsh} \in \G(\AXX)$. \newline
    Finally, since $\ev_0^\A$ is an algebra homomorphism, the map $\rho_{Z_\A}^{-1} \circ Z_\A^\sh \circ \q^{-1}_G : \h^1_{G, \st} \to \A[T]$ is an algebra homomorphism if and only if $\ev_0^\A \circ \rho_{Z_\A}^{-1} \circ Z_\A^\sh \circ \q^{-1}_G : \h^1_{G, \st} \to \A$ is an algebra homomorphism. Moreover, thanks to Proposition \ref{alghom_iff_grplk}, applied to $\LL=Y_G$, $\varphi = \ev_0^\A \circ \rho_{Z_\A}^{-1} \circ Z_\A^\sh \circ \q^{-1}_G$, $\qsh = \st$ and $\widehat{\Delta}_{\qsh} = \widehat{\Delta}_{G, \st}$, the map $\ev_0^\A \circ \rho_{Z_\A}^{-1} \circ Z_\A^{\sh} \circ \q^{-1}_G : \h^1_{G, \ast} \to \A[T]$ is an algebra homomorphism if and only if $\Phi_{\ev_0^\A \circ \rho_{Z_\A}^{-1} \circ Z^\sh_\A \circ \q^{-1}_G} \in \G(\AYY)$.
    Let us show that$\Phi_{\ev_0^\A \circ \rho_{Z_\A}^{-1} \circ Z^\sh_\A \circ \q^{-1}_G} \in \G(\AYY)$ if and only if $(\Phi_{Z_\A \circ \rsh})_{\st} \in \G(\AYY)$. In order to do so, it is enough to show that\footnote{For the special case of $Z_\C$  in Example \ref{ex:polylog} this was shown in \cite[Theorem 3.13]{YZ}.}     
    \begin{equation}
        \label{mainequality}
        (\Phi_{Z_\A \circ \rsh})_{\st} = \Phi_{\ev_0^\A \circ \rho_{Z_\A}^{-1} \circ Z^\sh_\A \circ \q^{-1}_G}. 
    \end{equation}
    \begin{steplist}
        \item Evaluation of $\Phi_{\ev_0^\A \circ \rho_{Z_\A}^{-1} \circ Z^\sh_\A \circ \q^{-1}_G}$. We have
        \begin{equation}
            \label{Phi_morph}
            \Phi_{\ev_0^\A \circ \rho_{Z_\A}^{-1} \circ Z^\sh_\A \circ \q^{-1}_G} = \sum_{w \in Y^\ast} \ev_0^\A \circ \rho_{Z_\A}^{-1} \circ Z^\sh_\A \circ \q^{-1}_G(w) \, w\,.
        \end{equation}
        \item Evaluation of $(\Phi_{Z_\A \circ \rsh})_{\st} = (\Phi_{Z_\A \circ \rsh})_{\mathrm{corr}} \,\, \pi_{Y_G}\left(\qq_G(\Phi_{Z_\A \circ \rsh})\right)$. In particular,
        \begin{align*}
            (\Phi_{Z_\A \circ \rsh})_{\mathrm{corr}} & = \exp\left(\sum_{n \geq 2}\frac{(-1)^{n-1}}{n}\left(\Phi_{Z_\A \circ \rsh} | x_0^{n-1} x_1\right) x_1^n \right) \\
            & = \exp\left(- \sum_{n \geq 2}\frac{(-1)^n}{n} Z_\A\left(x_0^{n-1} x_1\right) x_1^n\right) = \Gamma_{Z_\A}^{-1}(x_1),
        \end{align*}
        and
        \begin{align*}
            \pi_{Y_G} \circ \qq_G(\Phi_{Z_\A \circ \rsh}) & = \pi_{Y_G} \circ \qq_G\left(\sum_{w \in X^\ast_G} Z_\A(\rsh(w)) w \right) = \sum_{w \in X^\ast_G} Z_\A(\rsh(w)) \,\, \pi_{Y_G}(\qq_G(w)) \\
            & = \sum_{w \in Y^\ast_G} Z_\A(\reg_{\sh}(w)) \, \qq_G(w) = \sum_{w \in Y^\ast_G} Z_\A \circ \reg_{\sh} \circ \q^{-1}_G(w) \, w,
        \end{align*}
        where the second equality follows by linearity of $\pi_{Y_G} \circ \qq_G$, the third one by definitions of $\pi_{Y_G}$, $\qq_G$ and $\rsh$ and the fourth one from identity \eqref{qq_and_q}. 
        Therefore,
        \begin{equation}
            \label{Phi_star}
            (\Phi_{Z_\A \circ \rsh})_{\st} = \sum_{w \in Y^\ast_G} Z_\A \circ \reg_{\sh} \circ \q^{-1}_G(w) \, \Gamma_{Z_\A}^{-1}(x_1) \, w.
        \end{equation}
    \end{steplist}
    
    \noindent In order to prove equality \eqref{mainequality}, it suffices to prove that
    \[
        \left((\overline{\Phi}_{Z_\A})_{\st} | x_1^m w_0\right) = \left(\Phi_{\ev_{0} \circ \rho_{Z_\A}^{-1} \circ Z_\A^\sh \circ \q^{-1}_G} | x_1^m w_0\right), 
    \]
    for any $m \in \Z_{\geq 0}$ and any $w_0 \in \h^0_G$.
    \begin{steplist}
        \setcounter{steplisti}{2}
        \item \label{rho_inverse_eval}Evaluation of $\rho_{Z_\A}^{-1}$. Let $u$ be a formal parameter. \newline
        Let us write
        \begin{equation}
            \label{DecompGamma}
            \Gamma^{-1}_{Z_\A}(u) = \sum_{l \geq 0} \gamma_l u^l,
        \end{equation}
        where $\gamma_a$ are elements in $\A$ expressed in terms of images by $Z_\A$. This implies that
        \[
            \rho_{Z_\A}^{-1}\left(\frac{T^l}{l!}\right) = \sum_{j=0}^l \gamma_j \frac{T^{l-j}}{(l-j)!},
        \]
        for any $l \in \Z_{\geq 0}$.
        \item \label{reg_sha_eval}Evaluation of $\reg_{\sh}(x_1^n\q^{-1}_G(w_0))$ for any $n \in \Z_{\geq 0}$ and $w_0 \in \h^0_G$. \newline
        Recall that $\q^{-1}_G(w_0) \in \h^0_G$, since $w_0 \in \h^0_G$. Therefore, one may write $\q^{-1}_G(w_0) = x_g w_0^\prime$ for some $g \in (G \sqcup \{0\}) \smallsetminus \{1\}$ and $w_0^\prime \in \h^1_G$. By Corollary \ref{cor:regformula} for the case $w=\q^{-1}_G(w_0)$ we obtain
        \[
            \reg_{\sh}^T\left(\sum_{n \geq 0} x_1^n u^n \q^{-1}_G(w_0)\right) = x_g \left(\sum_{k \geq 0} (-1)^k x_1^k u^k \sh w_0^\prime\right) \sum_{l\geq 0} \frac{T^l}{l!} u^l,
        \]
        for a formal parameter $u$. This implies that
        \begin{equation}
            \label{regshTExpression}
            \reg_{\sh}^T(x_1^n \q^{-1}_G(w_0)) = \sum_{\substack{k,l\geq 0 \\ k+l=n}} \frac{(-1)^k}{l!} x_g (x_1^k \sh w_0^\prime) T^l 
        \end{equation}
        for any $n \in \Z_{\geq 0}$, and therefore
        \begin{equation}
            \label{regshExpression}
            \reg_{\sh}(x_1^n\q^{-1}_G(w_0)) = (-1)^n x_g (x_1^n \sh w_0^\prime). 
        \end{equation}
        \item Evaluation of $\left((\Phi_{Z_\A \circ \rsh})_{\st} | x_1^m w_0\right)$. \newline
        Injecting in \eqref{Phi_star} the expressions \eqref{DecompGamma} and \eqref{regshExpression} from \ref{rho_inverse_eval} and \ref{reg_sha_eval} respectively, we obtain
        \begin{align*}
            \left((\Phi_{Z_\A \circ \rsh})_{\st} | x_1^m w_0\right) & = \sum_{\substack{k,l \geq 0 \\ k+l=m}} Z_\A \circ \reg_{\sh} \circ \q^{-1}_G(x_1^k w_0) \gamma_l = \sum_{\substack{k,l \geq 0 \\ k+l=m}} Z_\A \circ \reg_{\sh}(x_1^k\q^{-1}_G(w_0)) \gamma_l \\
            & = \sum_{\substack{k,l \geq 0 \\ k+l=m}} (-1)^k Z_\A \left(x_g (x_1^k \sh w_0^\prime)\right) \gamma_l.
        \end{align*}
        \item Evaluation of $\left(\Phi_{\ev_0^\A \circ \rho_{Z_\A}^{-1} \circ Z^\sh_\A \circ \q^{-1}_G} | x_1^m w_0\right)$.
        We have
        \begin{align*}
            \left(\Phi_{\ev_0^\A \circ \rho_{Z_\A}^{-1} \circ Z^\sh_\A \circ \q^{-1}_G} | x_1^m w_0\right) & = \ev_0^\A \circ \rho_{Z_\A}^{-1} \circ Z^\sh_\A \circ \q^{-1}_G(x_1^m w_0) = \ev_0^\A \circ \rho_{Z_\A}^{-1} \circ Z^\sh_\A (x_1^m \q^{-1}_G(w_0)) \\
            & = \ev_0^\A \circ \rho_{Z_\A}^{-1} \left( \sum_{\substack{k,l\geq 0 \\ k+l=m}} (-1)^k Z_\A\left(x_g (x_1^k \sh w_0^\prime)\right) \frac{T^l}{l!} \right) \\
            & = \ev_0^\A \left( \sum_{\substack{k,l\geq 0 \\ k+l=m}} (-1)^k Z_\A\left(x_g (x_1^k \sh w_0^\prime)\right) \sum_{j=0}^l \gamma_j \frac{T^{l-j}}{(l-j)!} \right) \\
            & = \sum_{\substack{k,l\geq 0 \\ k+l=m}} (-1)^k Z_\A\left(x_g (x_1^k \sh w_0^\prime)\right) \gamma_l,
        \end{align*}
        where the first equality comes from expression \eqref{Phi_morph} and the third one from the Composition \eqref{compositionZsh} which interprets $Z_\A^\sh$ as a composition involving $\reg_{\sh}^T$ which enables us to use expression \eqref{regshTExpression}.
    \end{steplist}
    Therefore, we have
    \[
        \left((\Phi_{Z_\A \circ \rsh})_{\st} | x_1^m w_0\right) = \sum_{\substack{k,l \geq 0 \\ k+l=m}} (-1)^k Z_\A \left(x_g (x_1^k \sh w_0^\prime)\right) \gamma_l = \left(\Phi_{\ev_0^\A \circ \rho_{Z_\A}^{-1} \circ Z^\sh_\A \circ \q^{-1}_G} | x_1^m w_0\right),
    \]
    which establishes equality \eqref{mainequality}.  
\end{proof}

\begin{theorem}
    \label{main_bijection}
    The map
    \begin{equation}\label{eq:bijmap}
        \begin{array}{ccc}
             \EDS(G)(\A) & \longrightarrow & \DMR(G)(\A) \\
             Z_\A & \longmapsto & \Phi_{Z_\A \circ \rsh}
        \end{array}
    \end{equation}
    is bijective.
\end{theorem}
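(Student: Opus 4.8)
The plan is to build directly on Theorem \ref{EDSiffDMR}. That theorem already supplies two of the three things one needs: the assignment $Z_\A \mapsto \Phi_{Z_\A\circ\rsh}$ really does land in $\DMR(G)(\A)$, and conversely any $Z_\A \in \mathrm{Hom}_\Q(\h^0_G,\A)$ whose image lies in $\DMR(G)(\A)$ must belong to $\EDS(G)(\A)$. To upgrade this to a bijection I would exhibit an explicit two-sided inverse, namely
\[
    \Theta \colon \DMR(G)(\A) \longrightarrow \mathrm{Hom}_\Q(\h^0_G, \A), \qquad \Theta(\Phi) \df (\Phi \mid \cdot)\big|_{\h^0_G}.
\]
The entire argument then collapses onto the single identity
\begin{equation}\label{eq:plankeyid}
    (\Phi \mid w) = (\Phi \mid \rsh(w)) \qquad \text{for all } \Phi \in \DMR(G)(\A) \text{ and all } w \in X_G^\ast.
\end{equation}
Indeed, granting \eqref{eq:plankeyid}, for $Z_\A \df \Theta(\Phi)$ one has $Z_\A(\rsh(w)) = (\Phi\mid\rsh(w)) = (\Phi\mid w)$, since $\rsh(w)\in\h^0_G$ and $Z_\A$ agrees with $(\Phi\mid\cdot)$ there; hence $\Phi_{Z_\A\circ\rsh}=\Phi$. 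In particular $\Phi_{\Theta(\Phi)\circ\rsh}\in\DMR(G)(\A)$, so $\Theta$ does take values in $\EDS(G)(\A)$ by Theorem \ref{EDSiffDMR}, and the composite of \eqref{eq:bijmap} after $\Theta$ is the identity of $\DMR(G)(\A)$.

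The crux is therefore \eqref{eq:plankeyid}, and this is where I expect the real work to sit. Since $\Phi$ is grouplike for $\widehat{\Delta}_{G,\sh}$, Proposition \ref{alghom_iff_grplk} (applied to $\LL = X_G$ and the series $\Phi = \sum_w (\Phi\mid w)\,w$) shows that $(\Phi\mid\cdot)\colon\h_{G,\sh}\to\A$ is an algebra homomorphism; as $\rsh\colon\h_{G,\sh}\to\h^0_{G,\sh}$ is also an algebra homomorphism, both $(\Phi\mid\cdot)$ and $(\Phi\mid\cdot)\circ\rsh$ are algebra homomorphisms $\h_{G,\sh}\to\A$. To see that they coincide it suffices to check equality on a generating set of the commutative algebra $\h_{G,\sh}$. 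Here I would invoke the structural fact — the analogue for general $G$ of the polynomial decomposition of Ihara–Kaneko–Zagier, which is precisely what makes $\rsh^T$ a well-defined algebra homomorphism — that $\h_{G,\sh}$ is generated as a $\Q$-algebra by $x_0$, $x_1$ and $\h^0_G$, i.e. $\h_{G,\sh}=\h^0_{G,\sh}[x_0,x_1]$ with respect to $\sh$. On $\h^0_G$ the map $\rsh$ is the identity, so the two homomorphisms agree there; on the remaining generators $(\Phi\mid\rsh(x_0))=(\Phi\mid 0)=0=(\Phi\mid x_0)$ and likewise $(\Phi\mid\rsh(x_1))=0=(\Phi\mid x_1)$, using the defining vanishing conditions $(\Phi\mid x_0)=(\Phi\mid x_1)=0$ of $\DMR(G)(\A)$. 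Agreement on generators forces agreement everywhere, which is exactly \eqref{eq:plankeyid}.

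It then remains to assemble the pieces. For the composite of $\Theta$ after \eqref{eq:bijmap}: given $Z_\A\in\EDS(G)(\A)$ and a word $w_0\in X_G^\ast$ lying in $\h^0_G$ (so that $\rsh(w_0)=w_0$), coefficient extraction gives $\big(\Phi_{Z_\A\circ\rsh}\mid w_0\big)=Z_\A(\rsh(w_0))=Z_\A(w_0)$; since such words form a $\Q$-basis of $\h^0_G$, this yields $\Theta(\Phi_{Z_\A\circ\rsh})=Z_\A$, and the same computation shows directly that \eqref{eq:bijmap} is injective. Combined with the previous paragraph, $\Theta$ is a genuine two-sided inverse, so \eqref{eq:bijmap} is bijective. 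I expect the only nontrivial ingredients to be the structural generation statement $\h_{G,\sh}=\h^0_{G,\sh}[x_0,x_1]$ and the reformulation of grouplikeness as the homomorphism property via Proposition \ref{alghom_iff_grplk}; once these are in hand, identity \eqref{eq:plankeyid} and hence the whole theorem follow formally, with no further computation required.
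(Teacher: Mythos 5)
Your proposal is correct and follows essentially the same route as the paper: the paper likewise reduces everything to the identity $(\Phi \mid w) = Z_\A\circ\rsh(w)$ via Theorem \ref{EDSiffDMR}, proving it by observing that $(\Phi\mid\cdot)$ and $Z_\A\circ\rsh$ are both shuffle-algebra homomorphisms agreeing on $\h^0_G$ and killing $x_0,x_1$, with the uniqueness of such an extension (your generation statement $\h_{G,\sh}=\h^0_{G,\sh}[x_0,x_1]$) packaged as Lemma \ref{h0h_extension}. The only difference is presentational: you exhibit the inverse $\Theta$ explicitly, while the paper argues injectivity and surjectivity separately.
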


\noindent In order to prove this, we will need the following result:
\begin{lemma}\label{h0h_extension}
    Let $Z_{\A} :\h^0_{G, \sh} \to \A$ be a $\Q$-algebra homomorphism. Then $Z_{\A} \circ \rsh :\h_{G, \sh} \to \A$ is the unique $\Q$-algebra homomorphism such that it agrees with $Z_{\A}$ on $\h^0_G$ and maps $x_0$ to $0$ and $x_1$ to $0$.    
\end{lemma}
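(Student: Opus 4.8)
The plan is to treat existence and uniqueness separately. Existence is immediate and purely formal: the map $\rsh : \h_{G, \sh} \to \h^0_{G, \sh}$ is a $\Q$-algebra homomorphism by its very definition as $\ev_0^{\h^0_\sh} \circ \rsh^T$, and $Z_\A : \h^0_{G, \sh} \to \A$ is one by hypothesis, so the composite $Z_\A \circ \rsh : \h_{G, \sh} \to \A$ is again a $\Q$-algebra homomorphism. The three required properties are then read off from the defining properties of $\rsh$: it restricts to the identity on $\h^0_G$, whence $Z_\A \circ \rsh$ agrees with $Z_\A$ there; and $\rsh(x_0) = \rsh(x_1) = 0$, whence $Z_\A \circ \rsh$ sends $x_0$ and $x_1$ to $Z_\A(0) = 0$. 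No computation is needed beyond unwinding definitions.

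For uniqueness, the essential input is that $\h^0_G$ together with the two letters $x_0$ and $x_1$ generates $\h_{G, \sh}$ as a $\Q$-algebra; in fact $\h_{G, \sh} = \h^0_{G, \sh}[x_0, x_1]$ is a polynomial algebra in $x_0, x_1$ over $\h^0_{G, \sh}$, the $G$-analogue of the structure theorem of Ihara, Kaneko and Zagier, obtained by successively writing $\h^1_{G, \sh} = \h^0_{G, \sh}[x_1]$ and then $\h_{G, \sh} = \h^1_{G, \sh}[x_0]$. Granting this, any $\Q$-algebra homomorphism out of $\h_{G, \sh}$ is determined by its restriction to $\h^0_G$ and its values on $x_0$ and $x_1$. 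Hence if a $\Q$-algebra homomorphism $\phi : \h_{G, \sh} \to \A$ agrees with $Z_\A$ on $\h^0_G$ and maps $x_0$ and $x_1$ to $0$, then $\phi$ and $Z_\A \circ \rsh$ coincide on the generating set $\h^0_G \cup \{x_0, x_1\}$, and are therefore equal.

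I expect the only genuine content to be this generation statement, which is exactly the structural fact already implicit in the well-definedness of $\rsh^T$: one cannot prescribe an algebra homomorphism by the identity on $\h^0_G$, $x_0 \mapsto 0$ and $x_1 \mapsto T$ unless these data freely generate $\h_{G, \sh}$ for $\sh$. It is thus available as soon as $\rsh^T$ and $\rsh$ have been introduced, and at the level of an explicit argument the regularization identity of Lemma \ref{lem:barregformula} is precisely the tool that rewrites an arbitrary word as a shuffle-polynomial in $x_0, x_1$ with coefficients in $\h^0_G$ (stripping leading $x_1$'s, and analogously trailing $x_0$'s). I therefore anticipate no real obstacle here, only the bookkeeping of invoking the generalized IKZ structure theorem; the remaining verifications are entirely formal.
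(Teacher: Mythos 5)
Your proposal is correct and is essentially the paper's argument: the paper's entire proof is ``This follows by definition of $\rsh$,'' since $\rsh$ was already introduced as \emph{the} unique $\Q$-algebra homomorphism $\h_{G,\sh}\to\h^0_{G,\sh}$ that is the identity on $\h^0_G$ and kills $x_0$ and $x_1$. You correctly identify that the only genuine content is the generation/structure statement $\h_{G,\sh}=\h^0_{G,\sh}[x_0,x_1]$ (the $G$-analogue of the Ihara--Kaneko--Zagier theorem), which the paper leaves implicit in the well-definedness of $\rsh^T$; making it explicit, as you do, is a harmless elaboration rather than a different route.
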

\begin{proof}
    This follows by definition of $\rsh$.
\end{proof}

\begin{proof}[Proof of Theorem \ref{main_bijection}]
    Let us start by showing that the map \eqref{eq:bijmap} is injective. Indeed, let $Z_\A$ and $Z_\A^{\prime} \in \EDS(G)(\A)$ such that $\Phi_{Z_\A \circ \rsh} = \Phi_{Z^\prime_\A \circ \rsh}$. This implies that $Z_\A \circ \rsh(w)=Z^\prime_\A \circ \rsh(w)$ for any $w \in X^\ast$. In particular, for any $w_0 \in X^\ast \cap \h^0_G$, one obtains $Z_\A(w)=Z^\prime_\A(w)$. Then one concludes the equality by linearity. \newline
    Now, let us show surjectivity. Let $\Phi \in \DMR(G)(\A)$. Define the linear map $Z_\A : \h^0_G \to \A$ such that $w_0 \mapsto (\Phi | w_0)$, for $w_0 \in X^\ast \cap \h^0_G$. Thanks to Theorem \ref{EDSiffDMR}, the result follows if we show that $\Phi = \Phi_{Z_\A \circ \rsh}$. This is equivalent to show that
    \begin{equation}
        \label{extension_identity}
        (\Phi | w) = Z_\A \circ \rsh(w),
    \end{equation}
    for any $w \in X^\ast_G$.
    Let us consider the linear map $z_\A : \h_G \to \A$ such that $w \mapsto (\Phi | w)$ for any $w \in X^\ast_G$. It is immediate that this map agrees with $Z_\A$ on $\h^0_G$ and maps $x_0$ and $x_1$ to $0$. In addition, since $\Phi \in \G(\AXX)$, then $z_\A$ is an algebra homomorphism $\h_{G, \sh} \to \A$.
    On the other hand, the grouplikeness of $\Phi$ also implies that the map $Z_\A : \h^0_{G, \sh} \to \A$ is an algebra homomorphism. Therefore, thanks to Lemma \ref{h0h_extension}, $Z_\A \circ \rsh : \h_{G, \sh} \to \A$ is the unique algebra morphism such that it is $Z_\A$ on $\h^0_G$ and maps $x_0$ and $x_1$ to $0$. This proves that $z_\A = Z_\A \circ \rsh$ and thus identity \eqref{extension_identity}.
\end{proof}

    \section{Distribution relations}
Throughout this section, $G$ is a non-trivial group, $d \in \Z_{>0}$ is a divisor of the order of $G$ and consider the subgroup
\[
    G^d \df \{ g^d \, | \, g \in G\}.
\]
of $G$.
This section extends algebraic relations between multiple polylogarithm values at roots of unity to include distribution relations. We begin by examining Racinet's approach, which encodes these additional relations into a subscheme $\mathsf{DMRD}(G)$ of $\mathsf{DMR}(G)$. Parallel to this, we introduce a subscheme $\mathsf{EDSD}(G)$ of $\mathsf{EDS}(G)$, which captures the distribution relations within the framework developed by Arakawa and Kaneko \cite{AK}. Thanks to the correspondence of schemes established in the previous section, we prove that these subschemes are indeed isomorphic. Finally, we apply this equivalent formalism to prove the conjecture of Zhao \cite[Conjeture 4.7]{Zha10} stated in the introduction.

\subsection{The homomorphisms \texorpdfstring{$i_d^\ast$}{idstar}, \texorpdfstring{$p^d_\ast$}{pdstar}, \texorpdfstring{$i_d^\sharp$}{idsharp} and \texorpdfstring{$p_d^\sharp$}{pdsharp}}
Let $p^d : G \twoheadrightarrow G^d$ be the group homomorphism $g \mapsto g^d$ and $i_d : G^d \hookrightarrow G$ be the canonical inclusion. \newline
Applying the functor given by Proposition-Definition \ref{star_functors} \ref{upper_star} for $\phi = i_d$, we define the Hopf algebra homomorphism $i_d^\ast : (\AXX, \widehat{\Delta}_{G,\sh}) \to (\AXXGd, \widehat{\Delta}_{G^d,\sh})$. One checks that this homomorphism is explicitly given by
\[
    i_d^\ast : \quad \begin{aligned}
        &x_0 \mapsto x_0 \\
        &x_g \mapsto \begin{cases} x_g & \text{if } g \in G^d \\ 0 & \text{otherwise}\end{cases}
    \end{aligned}
\]
Applying the functor given by Proposition-Definition \ref{star_functors} \ref{lower_star} for $\phi = p^d$, we define the Hopf algebra homomorphism $p^d_\ast : (\AXX, \widehat{\Delta}_{G,\sh}) \to (\AXXGd, \widehat{\Delta}_{G^d,\sh})$. One checks that this homomorphism is explicitly given by (see also \cite[§2.5.3]{Rac02} and \cite[§4]{Zha10})
\[
    p^d_\ast : \quad \begin{aligned}
        &x_0 \mapsto d \, x_0 \\
        &x_g \mapsto x_{g^d}
    \end{aligned}
\]
Applying the functor given by Proposition-Definition \ref{sharp_functors} \ref{upper_sharp} for $\phi = i_d$, we define the algebra homomorphism $i_d^\sharp : \h_{G^d} \to \h_G$. One checks that this homomorphism is explicitly given by (see also \cite[§2.5.3]{Rac02} and \cite[§4]{Zha10})
\[
    i_d^\sharp : \quad \begin{aligned}
        &x_0 \mapsto x_0 \\
        &x_h \mapsto x_h 
    \end{aligned}
\]
Applying the functor given by Proposition-Definition \ref{sharp_functors} \ref{lower_sharp} for $\phi = p^d$, we define the algebra homomorphism $p^d_\sharp : \h_{G^d} \to \h_G$. One checks that this homomorphism is explicitly given by
\[
    p^d_\sharp : \quad \begin{aligned}
        &x_0 \mapsto d \, x_0 \\
        &x_h \mapsto \sum_{g^d=h} x_g
    \end{aligned}
\]

Recall from Definition \ref{def:scalarproduct} the pairing
\((-, -)_G : \A\langle\langle X_G\rangle\rangle \otimes \h_G \to \A\) (resp. \((-, -)_{G^d} : \A\langle\langle X_{G^d}\rangle\rangle \otimes \h_{G^d} \to \A\)) for $\mathcal{L}=X_G$ (resp. $\mathcal{L}=X_{G^d}$).
\begin{lemma}
    We have
    \begin{enumerate}[label=(\alph*), leftmargin=*]
        \item $(i_d^\ast(S_2), P_1)_{G^d} = (S_2, i_d^\sharp(P_1))_G$, for $S_2 \in \A\langle\langle X_{G}\rangle\rangle$ and $P_1 \in \h_{G^d}$.
        \item $(p^d_\ast(S_1), P_2)_{G^d} = (S_1, p^d_\sharp(P_2))_G$, for $S_1 \in \A\langle\langle X_{G}\rangle\rangle$ and $P_2 \in \h_{G^d}$.
    \end{enumerate}
\end{lemma}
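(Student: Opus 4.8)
The plan is to reduce both identities to the level of words and then verify them by a direct coefficient comparison, using that all four maps are algebra homomorphisms determined by their values on the generators.

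First I would record the relevant linearity. The pairing $(-,-)_G$ is $\A$-linear in its first (series) argument and $\Q$-linear in its second (polynomial) argument, and for a fixed polynomial $P$ the value $(\psi\mid P)$ depends on only finitely many coefficients of $\psi$. Since the words form a $\Q$-basis of $\h_G$ and a topological $\A$-basis of $\AXX$, it suffices to prove each identity when $S_i$ is a single word $v$ (in $X_G^\ast$) and $P_j$ is a single word $w$ (in $X_{G^d}^\ast$); the general case then follows by bilinearity and continuity. After this reduction, $(\psi\mid w')$ is just the coefficient of the word $w'$ in $\psi$, so both sides of (a) and (b) become coefficient extractions.

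For (a): both $i_d^\ast$ and $i_d^\sharp$ act multiplicatively on words. From the generator formulas, $i_d^\sharp(w)=w$ for a word $w\in X_{G^d}^\ast$ (now read inside $X_G^\ast$), while $i_d^\ast(v)$ equals $v$ when every colored letter of $v$ carries a color in $G^d$ (i.e. $v\in X_{G^d}^\ast$) and equals $0$ otherwise. Comparing coefficients, the left side $(i_d^\ast(v),w)_{G^d}$ is $\delta_{v,w}$ when $v\in X_{G^d}^\ast$ and $0$ otherwise, while the right side $(v,i_d^\sharp(w))_G=(v,w)_G=\delta_{v,w}$; since $w\in X_{G^d}^\ast$, the extra constraint on the left is automatic whenever $v=w$, so the two agree. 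For (b): writing $n_0(v)$ for the number of $x_0$-letters of a word $v$ and $p$ for the letter map $x_0\mapsto x_0$, $x_g\mapsto x_{g^d}$, multiplicativity gives $p^d_\ast(v)=d^{\,n_0(v)}\,p(v)$, whereas $p^d_\sharp(w)=d^{\,n_0(w)}\sum v'$, the sum ranging over all words $v'$ with $p(v')=w$ (all lifts of each colored letter $x_h$ of $w$ to some $x_g$ with $g^d=h$, keeping the $x_0$-letters fixed). Extracting coefficients, the left side equals $d^{\,n_0(v)}$ if $p(v)=w$ and $0$ otherwise, while the right side equals $d^{\,n_0(w)}$ if $p(v)=w$ and $0$ otherwise.

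I expect the only genuinely delicate point to be the bookkeeping in (b): one must match the weight $d^{\,n_0(v)}$ produced by $p^d_\ast$ against the weight $d^{\,n_0(w)}$ produced by $p^d_\sharp$, and simultaneously confirm that ``$p(v)=w$'' is exactly the condition under which $v$ occurs among the lifts of $w$. Both reduce to the single observation that $p$ preserves $x_0$-positions, so that $p(v)=w$ forces $n_0(v)=n_0(w)$ and the two powers of $d$ coincide; hence no real obstacle remains. Alternatively, one could bypass the computation by noting that the $\sharp$- and $\ast$-functors of Proposition-Definitions \ref{sharp_functors} and \ref{star_functors} are built as graded transposes with respect to $(-,-)$, which would make the adjunctions (a) and (b) hold essentially by definition; I would nonetheless present the direct verification above, as it is short and self-contained.
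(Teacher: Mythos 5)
Your proof is correct and is essentially the paper's argument: the paper deduces both identities by specializing a general adjunction $(\phi^\ast(S),P)=(S,\phi^\sharp(P))$ and $(\phi_\ast(S),P)=(S,\phi_\sharp(P))$ (Lemma \ref{lem:astsharp}, proved in the appendix by exactly the word-by-word coefficient comparison you carry out, including the matching of the powers of $d$ via preservation of the $x_0$-positions). The only difference is organizational: you verify the two cases $\phi=i_d$ and $\phi=p^d$ directly rather than factoring through the general statement for an arbitrary homomorphism $\phi$.
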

\begin{proof}
    This follows immediately by applying Lemma \ref{lem:astsharp} to $\phi = i_d$, $G_1 = G^d$ and $G_2 = G$ (resp. $\phi = p^d$, $G_1 = G$ and $G_2 = G^d$).
\end{proof}

\subsection{The schemes \texorpdfstring{$\mathsf{DMRD}$}{DMRD} and \texorpdfstring{$\mathsf{EDSD}$}{EDSD} of distribution relations}
\begin{definition}[{\cite[Definition 3.2.1]{Rac02}}]
    We define $\DMRD(G)(\A)$ to be the set\footnote{In \cite{Rac02}, this is the set denoted by $\underline{\DMRD}(G)(\A)$.} of $\Phi \in \DMR(G)(\A)$ such that for every divisor $d$ of the order of $G$, we have
    \[
        p_\ast^d (\Phi) = \exp\Bigg(\sum_{g^d =1} (\Phi | x_g) x_1 \Bigg) i_d^\ast (\Phi).
    \]
\end{definition}

\begin{definition}
    A $\Q$-linear map $Z_\A : \h^0_G \to \A$ satisfies \emph{finite distribution} relations if for every divisor $d$ of the order of $G$, we have (equality of $\Q$-linear maps $\h^0_{G^d} \to \A$)
    \[
        Z_\A \circ p_\sharp^d = Z_\A \circ i^\sharp_d.
    \]
\end{definition}

\begin{example}
    Let $N > 1$ be a positive integer. Finite distribution relations of multiple polylogarithm values at $N$\textsuperscript{th} roots of unity are expressed by (see \cite{Gon01})
    \begin{equation}
        \label{dist_polylog}
        \Li_{(k_1, \dots, k_r)}(z_1, \dots, z_r) = d^{k_1 + \cdots + k_r - r} \sum_{\substack{t_i^d=z_i \\ 1 \leq i \leq r}} \Li_{(k_1, \dots, k_r)}(t_1, \dots, t_r),
    \end{equation}
    for any divisor $d$ of $N$. Recall from Example \ref{ex:polylog} the $\Q$-linear map $Z_\C : \h^0_{\mu_N} \to \C$. For any divisor $d$ of $N$ and any word $x_0^{k_1-1}x_{z_1} \cdots x_0^{k_r-1}x_{z_r}$ of $\h^0_{\mu_N^d}$, we have
    \[
        Z_\C \circ \q_{\mu_N}^{-1} \circ p_\sharp^d(x_0^{k_1-1}x_{z_1} \cdots x_0^{k_r-1}x_{z_r}) = d^{k_1 + \cdots + k_r - r} \sum_{\substack{t_i^d=z_i \\ 1 \leq i \leq r}} Z_\C \circ \q_{\mu_N}^{-1}(x_0^{k_1-1}x_{t_1} \cdots x_0^{k_r-1}x_{t_r}), 
    \]
    which is equal to the right hand side of \eqref{dist_polylog} thanks to \eqref{polylog_Zcircq}. On the other hand,
    \[
        Z_\C \circ \q_{\mu_N}^{-1} \circ i^\sharp_d(x_0^{k_1-1}x_{z_1} \cdots x_0^{k_r-1}x_{z_r}) = Z_\C \circ \q_{\mu_N}^{-1}(x_0^{k_1-1}x_{z_1} \cdots x_0^{k_r-1}x_{z_r}), 
    \]
    which is the left hand side of \eqref{dist_polylog} thanks to \eqref{polylog_Zcircq}. Therefore, equality \eqref{dist_polylog} is satisfied if and only if
    \[
        Z_\C \circ \q_{\mu_N}^{-1} \circ p_\sharp^d = Z_\C \circ \q_{\mu_N}^{-1} \circ i^\sharp_d.
    \]
    Thanks to Corollary \ref{sharp_comm} \eqref{comm_q_up} (resp. \eqref{comm_q_down}) applied to $\phi = i_d$, $G_1 = \mu_N$ and $G_2 = \mu_N^d$ (resp. $\phi = p^d$, $G_1 = \mu_N^d$ and $G_2 = \mu_N$), this is equivalent to
    \[
        Z_\C \circ p_\sharp^d \circ \q_{\mu_N^d}^{-1} = Z_\C \circ i^\sharp_d \circ \q_{\mu_N^d}^{-1}.
    \]
    Hence, equality \eqref{dist_polylog} is satisfied if and only if
    \[
        Z_\C \circ p_\sharp^d = Z_\C \circ i^\sharp_d.
    \]
\end{example}

\begin{definition}
    For a $\Q$-linear map $Z_\A : \h^0_G \to \A$, we define the $\A$-module automorphism $\sigma_{Z_\A}$ of $\A[T]$ such that
    \[
        \sigma_{Z_\A}(\exp(Tu)) = \exp\Bigg(\sum_{\substack{g^d=1\\ g\neq1}} Z_{\A}(x_g) u \Bigg) \exp(Tu),
    \]
for a formal parameter $u$. 
\end{definition}

Let us write
\begin{equation}
    \label{exp_dec}
    \exp\Bigg(\sum_{\substack{g^d=1 \\ g\neq1}} Z_\A(x_g) u\Bigg) = \sum_{l\geq 0} \delta_l u^l,    
\end{equation}
where $\delta_0 = 1$, $\delta_1 = \sum_{\substack{g^d=1\\ g\neq1}} Z_{\A}(x_g)$ and $\delta_l = \frac{\delta_1^l}{l!}$ for $l \geq 0$. It follows that
\begin{equation}
    \label{sigma_dec}
    \sigma_{Z_\A}\left(\frac{T^l}{l!}\right) = \sum_{j=0}^l\delta_j \frac{T^{l-j}}{(l-j)!}. 
\end{equation}

Additionally, for a word $w = x_h w^\prime$ with $h \in (G^d \sqcup \{0\}) \smallsetminus \{1\}$ and $w^\prime \in X^\ast_{G^d}$, we obtain from Lemma \ref{lem:barregformula} that
\[
    \overline{\reg}_\sh^T\left(\sum_{n \geq 0} x_1^n u^n w \right) = x_h \left(\sum_{k \geq 0} (-1)^k x_1^k u^k \, \sh \, \widetilde{\reg}_\sh(w^\prime)\right) \sum_{l\geq 0} \frac{T^l}{l!} u^l.
\]
In particular, this implies that
\begin{equation}
    \label{barregshTExpression}
    \overline{\reg}_{\sh}^T(x_1^m w) = \sum_{\substack{k,l\geq 0 \\ k+l=m}} \frac{(-1)^k}{l!} x_h (x_1^k \, \sh \, \widetilde{\reg}_\sh(w^\prime)) T^l, 
\end{equation}
and therefore
\begin{equation}
    \label{barregshExpression}
    \overline{\reg}_{\sh}(x_1^m w) = (-1)^m x_h (x_1^m \, \sh \, \widetilde{\reg}_\sh(w^\prime)). 
\end{equation}

\begin{definition}
    A $\Q$-algebra homomorphism $Z_\A : \h_{G, \sh}^0 \to \A$ satisfies \emph{regularized distribution} relations if for every divisor $d$ of the order of $G$, we have the following equality of $\Q$-linear maps $\h_{G^d} \to \A[T]$
    \[
        \overline{Z}_\A^\sh \circ p_\sharp^d = \sigma_{Z_\A} \circ \overline{Z}_\A^\sh \circ i^\sharp_d.
    \]  
\end{definition}

\begin{proposition}
    \label{prop:dist_iff_noeval}
    An algebra homomorphism $Z_\A : \h_\sh^0 \to \A$ satisfies regularized distribution relations if and only if for every divisor $d$ of the order of $G$, we have an equality of $\Q$-linear maps $\h_{G^d} \to \A$
    \[
        \ev_0^\A \circ \overline{Z}_\A^\sh \circ p_\sharp^d = \ev_0^\A \circ \sigma_{Z_\A} \circ \overline{Z}_\A^\sh \circ i^\sharp_d.
    \]    
\end{proposition}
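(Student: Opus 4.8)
The plan is to treat the two implications separately, the forward one being immediate and the converse requiring a generating-series argument. For the forward direction, if $Z_\A$ satisfies the regularized distribution relations, then composing the equality of maps $\h_{G^d}\to\A[T]$ with the algebra homomorphism $\ev_0^\A$ gives at once the asserted equality of maps $\h_{G^d}\to\A$. All the content lies in the converse, and my strategy there is to show that, for each divisor $d$, both sides of the regularized relation are, as polynomials in $T$, recovered from their $T=0$ specialization through one and the same factor $\exp(Tu)$.

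Concretely, fix a divisor $d$ and set $\xi\df\sum_{g^d=1,\,g\neq1}x_g$, so that $p^d_\sharp(x_1)=x_1+\xi$. For a word $w\in X^\ast_{G^d}$ not beginning with $x_1$ (the empty word allowed), I would apply both maps to the series $\frac{1}{1-ux_1}w=\sum_{m\geq0}u^m x_1^m w$. On the $i^\sharp_d$-side this is direct: since $i^\sharp_d$ fixes every letter of $X_{G^d}$, Lemma \ref{lem:barregformula} gives $\overline{\reg}_\sh^T\!\big(\frac{1}{1-ux_1}w\big)=(\cdots)\exp(Tu)$ with the bracketed series having coefficients in $\h^0_G$; applying $Z_\A\otimes\mathrm{id}$ and then $\sigma_{Z_\A}$, which by its defining property multiplies $\exp(Tu)$ by the scalar series $\exp(\delta_1 u)$, exhibits $\sigma_{Z_\A}\circ\overline{Z}_\A^\sh\circ i^\sharp_d\big(\frac{1}{1-ux_1}w\big)$ as its own $T=0$ value times $\exp(Tu)$.

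The main obstacle is the $p^d_\sharp$-side, because $p^d_\sharp(x_1)=x_1+\xi$ mixes $x_1$ with the letters $x_g$ ($g^d=1$, $g\neq1$), so that $p^d_\sharp\big(\frac{1}{1-ux_1}w\big)=\frac{1}{1-u(x_1+\xi)}\,p^d_\sharp(w)$ is not directly of the form handled by Lemma \ref{lem:barregformula}. To resolve this I would use the resolvent expansion $\frac{1}{1-u(x_1+\xi)}=\frac{1}{1-ux_1}\sum_{k\geq0}\big(u\xi\,\frac{1}{1-ux_1}\big)^k$, which converges $u$-adically. Since $p^d_\sharp(w)$ begins with a non-$x_1$ letter (for $h=0$ one obtains $x_0$, and for $h\in G^d\smallsetminus\{1\}$ every $g$ with $g^d=h$ satisfies $g\neq1$), each term of the expansion has the shape $\frac{1}{1-ux_1}\cdot(\text{a series of words each starting with some }x_g,\ g\neq1)$, to which Lemma \ref{lem:barregformula} applies term by term and yields the \emph{same} factor $\exp(Tu)$. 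Summing over $k$ shows that $\overline{Z}_\A^\sh\circ p^d_\sharp\big(\frac{1}{1-ux_1}w\big)$ is again its $T=0$ value times $\exp(Tu)$; the degenerate case $w$ empty (the $k=0$ term being the bare $\frac{1}{1-ux_1}$) is covered by the direct computation $\overline{\reg}_\sh^T\big(\frac{1}{1-ux_1}\big)=\exp(Tu)$.

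With both sides now written as $(\ev_0^\A\text{-value})\exp(Tu)$, I would invoke the hypothesis $\ev_0^\A\circ\overline{Z}_\A^\sh\circ p^d_\sharp=\ev_0^\A\circ\sigma_{Z_\A}\circ\overline{Z}_\A^\sh\circ i^\sharp_d$ on each word $x_1^m w$; assembling these into the generating series in $u$ forces the two $\ev_0^\A$-values to agree in $\A[[u]]$, whence multiplying by $\exp(Tu)$ and comparing coefficients of $u^m$ gives $\overline{Z}_\A^\sh\circ p^d_\sharp(x_1^m w)=\sigma_{Z_\A}\circ\overline{Z}_\A^\sh\circ i^\sharp_d(x_1^m w)$ for all $m\geq0$. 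As every word of $X^\ast_{G^d}$ is uniquely of the form $x_1^m w$ with $w$ not beginning with $x_1$, and these span $\h_{G^d}$, the two $\Q$-linear maps coincide, which is precisely the regularized distribution relation for $d$; ranging over all divisors $d$ finishes the proof.
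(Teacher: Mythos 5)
Your argument is correct, and the converse is handled by a genuinely different mechanism than the paper's. The paper works with the shuffle decomposition $x_1^{\sh m}\,\sh\,w$ ($w$ a word not beginning with $x_1$) rather than the concatenations $x_1^m w$: it first verifies the relation on $x_1^m$ alone by a direct binomial computation (both sides come out to $\sum_{j}\delta_j T^{m-j}/(m-j)!$), then observes that on words $w$ not beginning with $x_1$ both $\overline{Z}_\A^\sh\circ p_\sharp^d(w)$ and $\overline{Z}_\A^\sh\circ i^\sharp_d(w)$ lie in $\A$ (no $T$-dependence), so the $T=0$ hypothesis already gives the full identity there, $\sigma_{Z_\A}$ acting trivially on constants; it then multiplies the two cases using that $\overline{Z}_\A^\sh$, $p_\sharp^d$ and $i^\sharp_d$ are shuffle-algebra homomorphisms and that $\sigma_{Z_\A}$ is $\A$-linear, and concludes by the triangularity $x_1^{\sh m}\,\sh\,w=m!\,x_1^m w+(\text{shorter }x_1\text{-prefixes})$. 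You instead stay in the concatenation basis and control the $T$-dependence entirely through Lemma \ref{lem:barregformula}: your key point is that both $\overline{Z}_\A^\sh\circ p_\sharp^d$ and $\sigma_{Z_\A}\circ\overline{Z}_\A^\sh\circ i^\sharp_d$, applied to $\tfrac{1}{1-ux_1}w$, factor as their $T=0$ value times $\exp(Tu)$, after which the hypothesis forces equality coefficient by coefficient. The only delicate spot is the $p_\sharp^d$ side, where $p_\sharp^d(x_1)=x_1+\xi$ mixes the letters; your resolvent expansion $\tfrac{1}{1-u(x_1+\xi)}=\tfrac{1}{1-ux_1}\sum_{k\geq0}\bigl(u\xi\tfrac{1}{1-ux_1}\bigr)^k$ correctly reduces everything to the hypotheses of the lemma, though to apply the lemma one should first regroup the tail as $\sum_j u^j R_j$ with each $R_j$ a fixed polynomial whose words begin with non-$x_1$ letters (the tail itself depends on $u$); this bookkeeping is routine but worth writing out, as is the degenerate $k=0$, $w=1$ term giving the bare $\exp(Tu)$. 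What the paper's route buys is that the interaction of $x_1$ with $\xi$ collapses to a one-line binomial identity; what yours buys is that it never needs the multiplicativity of the maps on shuffle products nor the change of basis, only the regularization formula itself.
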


\begin{proof}
    ($\Rightarrow$) is immediate. Let us prove ($\Leftarrow$). In the following, we fix $d$ a divisor of the order of $G$.
    \begin{steplist}
        \item \label{Zpx1m_Zix1m}For any $m \in \Z_{\geq 0}$, let us show that
        \[
            \overline{Z}_\A^\sh \circ p_\sharp^d(x_1^m) = \sigma_{Z_\A} \circ \overline{Z}_\A^\sh \circ i^\sharp_d(x_1^m).
        \]
        We have
        \begin{align*}
            \sigma_{Z_\A} \circ \overline{Z}_\A^\sh \circ i^\sharp_d(x_1^m) & = \sigma_{Z_\A} \circ \overline{Z}_\A^\sh \circ i^\sharp_d\left(\frac{x_1^{\sh m}}{m!}\right) = \sigma_{Z_\A} \circ \overline{Z}_\A^\sh\left(\frac{x_1^{\sh m}}{m!}\right) \\
            & = \sigma_{Z_\A}\left(\frac{\overline{Z}_\A^\sh(x_1)^m}{m!}\right) = \sigma_{Z_\A}\left(\frac{T^m}{m!}\right) = \sum_{j=0}^m \delta_j \frac{T^{m-j}}{(m-j)!}, 
        \end{align*}
        where the first equality comes from $x_1^{\sh m} = x_1 \, \sh x_1 \, \sh \, \cdots \, \sh \, x_1 = m! \, x_1^m$, the third one from the fact that $\overline{Z}_\A^\sh : \h_{G, \sh} \to \A[T]$ is an algebra homomorphism and the last one from equality \eqref{sigma_dec}. On the other hand, we have
        \begin{align*}
            \overline{Z}_\A^\sh \circ p_\sharp^d(x_1^m) & = \overline{Z}_\A^\sh \circ p_\sharp^d\left(\frac{x_1^{\sh m}}{m!}\right) = \overline{Z}_\A^\sh\left(\frac{p_\sharp^d(x_1)^{\sh m}}{m!}\right) = \frac{1}{m!} \left(\overline{Z}_\A^\sh \circ p_\sharp^d(x_1)\right)^m \\
            & = \frac{1}{m!} \Bigg(\overline{Z}_\A^\sh \Bigg(\sum_{g^d = 1} x_g\Bigg)\Bigg)^m = \frac{1}{m!} \Bigg(\overline{Z}_\A^\sh \Bigg(x_1 + \sum_{\substack{g^d = 1\\g \neq 1}} x_g\Bigg)\Bigg)^m \\
            & = \frac{1}{m!} \Bigg(T + Z_\A\Bigg(\sum_{\substack{g^d = 1\\g \neq 1}} x_g\Bigg)\Bigg)^m = \frac{1}{m!}(\delta_0 T + \delta_1)^m \\
            & = \frac{1}{m!} \sum_{j=0}^m \frac{m!}{j!(m-j)!} \delta_1^j T^{m-j} = \sum_{j=0}^m \delta_j \frac{T^{m-j}}{(m-j)!},
        \end{align*}
        where the second equality comes from the fact that $p_\sharp^d : \h_{G^d, \sh} \to \h_{G, \sh}$ is an algebra homomorphism thanks to Corollary \ref{cor:alg_morph} \ref{cor:sh_alg_morph} and the last equality comes from the identity $\delta_j = \frac{\delta_1^j}{j!}$ for $j \geq 1$.
        \item \label{Zregp_Zregi}For a word $w$ of $\h_{G^d, \sh}$ that does not start with $x_1$, let us show that
        \[
            Z_\A \circ \rsh \circ p_\sharp^d(w) = Z_\A \circ \rsh \circ i^\sharp_d(w). 
        \]
        By assumption on $w$, let us write $w = x_h w^\prime$ with $h \in (G^d \cup \{0\}) \smallsetminus \{1\}$ and $w^\prime \in X^\ast_{G^d}$. We have
        \begin{align*}
            Z_\A \circ \rsh \circ i^\sharp_d(w) & = Z_\A \circ \rsh \circ i^\sharp_d(x_h w^\prime) = Z_\A \circ \rsh (x_h w^\prime) \\
            & = Z_\A\left(x_h (1 \, \sh \, \widetilde{\reg}_\sh(w^\prime) \right) = Z_\A\left( x_h \widetilde{\reg}_\sh(w^\prime) \right),
        \end{align*}
        where the third equality comes from \eqref{barregshExpression}. On the other hand,
        \begin{align*}
            Z_\A \circ \rsh \circ p_\sharp^d(w) & = \ev_0^\A \circ \overline{Z}_\A^\sh \circ p_\sharp^d(w) = \ev_0^\A \circ \sigma_{Z_\A} \circ \overline{Z}_\A^\sh \circ i^\sharp_d(w) \\
            & = \ev_0^\A \circ \sigma_{Z_\A} \circ \overline{Z}_\A^\sh \circ i^\sharp_d(x_h w^\prime) = \ev_0^\A \circ \sigma_{Z_\A} \circ \overline{Z}_\A^\sh(x_h w^\prime) \\
            & = \ev_0^\A \circ \sigma_{Z_\A} \left(Z_\A(x_h (1 \, \sh \, \widetilde{\reg}_\sh(w^\prime))\right) = \ev_0^\A\left(Z_\A(x_h \, \widetilde{\reg}_\sh(w^\prime))\right) \\
            & = Z_\A\left( x_h \widetilde{\reg}_\sh(w^\prime) \right),
        \end{align*}
        where the first equality comes from Corollary \ref{Zreg}, the second one from assumption on $Z_\A$ and the fifth one from expression \eqref{barregshExpression}.
        \item \label{barZsha_Zcircbarregsha}For a word $v$ of $\h_{G, \sh}$ that does not start with $x_1$, let us show that
        \[
            \overline{Z}_\A^\sh(v) = Z_A \circ \rsh(v).
        \]
        By assumption on $v$, let us write $v = x_g v^\prime$ with $g \in (G \cup \{0\}) \smallsetminus \{1\}$ and $v^\prime \in X^\ast_G$. We have
        \[
            \overline{Z}_\A^\sh(v) = \overline{Z}_\A^\sh(x_h v^\prime) = Z_\A(x_h \widetilde{\reg}_\sh(v^\prime)) = Z_A \circ \rsh(x_h v^\prime) = Z_A \circ \rsh(v),
        \]
        where the second equality comes from the composition \eqref{compositionZsh} which interprets $\overline{Z}_\A^\sh$ as a composition involving $\overline{\reg}_{\sh}^T$ and enables the use of expression \eqref{barregshTExpression} and the third equality comes from expression \eqref{barregshExpression}.
        \item \label{Zpw_Ziw}For a word $w$ of $\h_{G^d, \sh}$ that does not start with $x_1$, let us show that
        \[
            \overline{Z}_\A^\sh \circ p_\sharp^d(w) = \overline{Z}_\A^\sh \circ i^\sharp_d(w). 
        \]
        Since $w \in X_{G^d}^\ast$ does not start with $x_1$, so is $i^\sharp_d(w)$. Therefore, from \ref{barZsha_Zcircbarregsha}, it follows that
        \begin{equation}
            \label{barZisharp}
            \overline{Z}_\A^\sh \circ i^\sharp_d(w) = Z_A \circ \rsh \circ i^\sharp_d(w).
        \end{equation}
        On the other hand, since $w \in X_{G^d}^\ast$ does not start with $x_1$, $p_\sharp^d(w)$ is a linear combination of words in $X_G^\ast$ that do not start with $x_1$. Therefore, from \ref{barZsha_Zcircbarregsha}, it follows that
        \begin{equation}
            \label{barZpsharp}
            \overline{Z}_\A^\sh \circ p_\sharp^d(w) = Z_A \circ \rsh \circ p_\sharp^d(w).
        \end{equation}
        Then
        \[
            \overline{Z}_\A^\sh \circ p_\sharp^d(w) = Z_A \circ \rsh \circ p_\sharp^d(w) = Z_A \circ \rsh \circ i^\sharp_d(w) = \overline{Z}_\A^\sh \circ i^\sharp_d(w),
        \]
        where the first equality follows from \eqref{barZpsharp}; the second one from \ref{Zregp_Zregi} and the last one from \eqref{barZisharp}.
        \item \label{Zpx1mw_Zix1mw}For $m \in \Z_{\geq 0}$ and a word $w$ of $\h_{G^d, \sh}$ that does not start with $x_1$, let us show that
        \[
            \overline{Z}_\A^\sh \circ p_\sharp^d(x_1^m \, \sh \, w) = \sigma_{Z_\A} \circ \overline{Z}_\A^\sh \circ i^\sharp_d(x_1^m \, \sh \, w). 
        \]
        We have
        \begin{align*}
            \overline{Z}_\A^\sh \circ p_\sharp^d(x_1^m \, \sh \, w) & = \overline{Z}_\A^\sh \left(p_\sharp^d(x_1^m) \, \sh \, p_\sharp^d(w)\right) = \overline{Z}_\A^\sh \circ p_\sharp^d(x_1^m) \, \, \overline{Z}_\A^\sh \circ p_\sharp^d(w) \\
            & = \sigma_{Z_\A} \circ \overline{Z}_\A^\sh \circ i^\sharp_d(x_1^m) \, \, \overline{Z}_\A^\sh \circ i^\sharp_d(w) = \sigma_{Z_\A} \left(\overline{Z}_\A^\sh \circ i^\sharp_d(x_1^m) \, \, \overline{Z}_\A^\sh \circ i^\sharp_d(w) \right) \\
            & = \sigma_{Z_\A} \left(\overline{Z}_\A^\sh \left(i^\sharp_d(x_1^m) \, \sh \, i^\sharp_d(w) \right)\right) = \sigma_{Z_\A} \circ \overline{Z}_\A^\sh \circ i^\sharp_d(x_1^m \, \sh \, w), 
        \end{align*}
        where the first (resp. last) equality comes from the fact that $p_\sharp^d : \h_{G^d, \sh} \to \h_{G, \sh}$ (resp. $i^\sharp_d : \h_{G^d, \sh} \to \h_{G, \sh}$) is an algebra homomorphism thanks to Corollary \ref{cor:alg_morph} \ref{cor:sh_alg_morph}, the second and fifth ones from the fact that $\overline{Z}_\A^\sh : \h_\sh \to \A[T]$ is an algebra homomorphism and the third one from \ref{Zpx1m_Zix1m} and \ref{Zpw_Ziw}, the fourth one from the $\A$-linearity of $\sigma_{Z_\A}$ and the fact that $\overline{Z}_\A^\sh \circ i^\sharp_d(w) \in \A$ thanks to \ref{barZsha_Zcircbarregsha}.
    \end{steplist}
    Finally, since all words in $\h_{G^d}$ can be generated by the elements $x_1^m \, \sh \, w$, with $m \in \Z_{\geq 0}$ and $w \in X_{G^d}^\ast$ that does not start with $x_1$, the equality of \ref{Zpx1mw_Zix1mw} suffices to prove that
     \[
        Z_\A \circ p_\sharp^d = Z_\A \circ i^\sharp_d,
    \]
    which is the wanted result.
\end{proof}

\begin{definition}\label{def:edsd}
    We define $\EDSD(G)(\A)$ to be the set of elements $Z_\A \in \EDS(G)(\A)$ such that the algebra homomorphism $Z_\A : \h_{G, \sh}^0 \to \A$ satisfies regularized distribution relations.  
\end{definition}

\begin{theorem}
    \label{main_bijection_2}
    The bijection $\EDS(G)(\A) \to \DMR(G)(\A)$, $Z_\A \mapsto \Phi_{Z_\A \circ \rsh}$ restricts to a bijection
    \[
        \EDSD(G)(\A) \longrightarrow \DMRD(G)(\A).
    \]
\end{theorem}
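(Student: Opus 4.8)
The plan is to leverage the bijection of Theorem~\ref{main_bijection} and reduce the statement to a single equivalence of distribution conditions. Since $\EDSD(G)(\A)\subset\EDS(G)(\A)$ and $\DMRD(G)(\A)\subset\DMR(G)(\A)$ are cut out by imposing, respectively, the regularized distribution relations and the relation $p_\ast^d(\Phi)=\exp\big(\sum_{g^d=1}(\Phi\mid x_g)x_1\big)i_d^\ast(\Phi)$ for every divisor $d$, it suffices to show that for $Z_\A\in\EDS(G)(\A)$ with image $\Phi\df\Phi_{Z_\A\circ\rsh}$ one has: $Z_\A$ satisfies the regularized distribution relations if and only if $\Phi$ satisfies the above relation, for each fixed $d$. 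Throughout I will use the defining identity $(\Phi\mid w)=Z_\A\circ\rsh(w)$. A first simplification is that the exponential prefactor is $\exp(\delta_1 x_1)$ with $\delta_1=\sum_{g^d=1,\,g\neq1}Z_\A(x_g)$ as in \eqref{exp_dec}, since $\rsh(x_1)=0$ and $\rsh(x_g)=x_g$ for $g\neq1$; this already exhibits the expected parallel between the regularization variable $x_1$ on the $\DMR$ side and the variable $T$ appearing in $\sigma_{Z_\A}$ on the $\EDS$ side.

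Next I would pass to the evaluated form of the regularized distribution relations provided by Proposition~\ref{prop:dist_iff_noeval}, so that the condition on $Z_\A$ becomes, for all $w\in X^\ast_{G^d}$,
\[
\ev_0^\A\circ\overline{Z}_\A^\sh\circ p_\sharp^d(w)=\ev_0^\A\circ\sigma_{Z_\A}\circ\overline{Z}_\A^\sh\circ i^\sharp_d(w).
\]
For the left-hand side, Corollary~\ref{Zreg} gives $\ev_0^\A\circ\overline{Z}_\A^\sh=Z_\A\circ\rsh$, so that it equals $(\Phi\mid p_\sharp^d(w))$, which by the duality between $p_\ast^d$ and $p_\sharp^d$ equals $(p_\ast^d(\Phi)\mid w)$. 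On the other side, reading off the coefficient of $w=x_1^m w'$ (with $w'$ not beginning in $x_1$) in the target relation $p_\ast^d(\Phi)=\exp(\delta_1 x_1)\,i_d^\ast(\Phi)$ and expanding the product by deconcatenation yields $(p_\ast^d(\Phi)\mid w)=\sum_{k=0}^m\delta_k\,(i_d^\ast(\Phi)\mid x_1^{m-k}w')$. Thus both conditions share the same left-hand side $(p_\ast^d(\Phi)\mid w)$, and everything reduces to matching the two right-hand sides.

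The heart of the argument is then the unconditional identity
\[
\ev_0^\A\circ\sigma_{Z_\A}\circ\overline{Z}_\A^\sh\circ i^\sharp_d(x_1^m w')=\sum_{k=0}^m\delta_k\,(i_d^\ast(\Phi)\mid x_1^{m-k}w'),
\]
valid for every $Z_\A$ regardless of distribution relations. To prove it I would evaluate both sides explicitly. Writing $i^\sharp_d(w')=x_h v$ with $h\in(G^d\sqcup\{0\})\smallsetminus\{1\}$, so that $i^\sharp_d(x_1^m w')=x_1^m x_h v$, the formula \eqref{barregshTExpression} gives $\overline{Z}_\A^\sh(x_1^m x_h v)=\sum_{k+l=m}\tfrac{(-1)^k}{l!}Z_\A\big(x_h(x_1^k\sh\widetilde{\reg}_\sh(v))\big)T^l$; since \eqref{sigma_dec} shows that $\ev_0^\A\circ\sigma_{Z_\A}$ is evaluation at $T=\delta_1$ and $\delta_l=\delta_1^l/l!$, the left-hand side becomes $\sum_{k+l=m}(-1)^k\delta_l\,Z_\A\big(x_h(x_1^k\sh\widetilde{\reg}_\sh(v))\big)$. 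For the right-hand side I would use the duality $(i_d^\ast(\Phi)\mid u)=(\Phi\mid i^\sharp_d(u))=Z_\A\circ\rsh(i^\sharp_d(u))$ together with \eqref{barregshExpression} to get $(i_d^\ast(\Phi)\mid x_1^{m-k}w')=(-1)^{m-k}Z_\A\big(x_h(x_1^{m-k}\sh\widetilde{\reg}_\sh(v))\big)$; substituting and reindexing the summation makes the two expressions coincide. This establishes the evaluated relation $\iff$ $p_\ast^d(\Phi)=\exp(\delta_1 x_1)i_d^\ast(\Phi)$ coefficient by coefficient, and hence as series.

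Combining these steps for every divisor $d$ shows that $Z_\A$ lies in $\EDSD(G)(\A)$ precisely when $\Phi_{Z_\A\circ\rsh}$ lies in $\DMRD(G)(\A)$, so the bijection of Theorem~\ref{main_bijection} restricts to the asserted bijection. I expect the genuine obstacle to be the coefficient identity of the third paragraph: one must track the shuffle regularization in both its $T$-polynomial incarnation $\overline{\reg}_\sh^T$ and its constant-term incarnation $\rsh$ simultaneously, and check that the binomial combinatorics produced by the shift operator $\sigma_{Z_\A}$ matches exactly that produced by left multiplication by $\exp(\delta_1 x_1)$; once the explicit formulas \eqref{barregshTExpression} and \eqref{barregshExpression} are in hand, this is a bookkeeping matter, and all remaining steps are formal consequences of results already established.
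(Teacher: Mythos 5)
Your proposal is correct and follows essentially the same route as the paper: reduce via Theorem~\ref{main_bijection}, pass to the evaluated form of the regularized distribution relations using Proposition~\ref{prop:dist_iff_noeval}, convert the $\DMRD$ condition into coefficient identities via the duality of Lemma~\ref{lem:astsharp}, and match the coefficients of $x_1^m w'$ using \eqref{barregshTExpression}, \eqref{barregshExpression}, \eqref{exp_dec} and \eqref{sigma_dec}. The only (cosmetic) difference is that you isolate the unconditional coefficient identity for the right-hand side once, whereas the paper carries out the same computation within its two cases $Z_\A\in\EDSD(G)(\A)$ and $Z_\A\notin\EDSD(G)(\A)$.
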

\begin{proof}
    Thanks to Theorem \ref{main_bijection}, this is equivalent to show that for $ Z_\A \in \EDS(G)(\A)$
    \[
        Z_\A \in \EDSD(G)(\A) \Longleftrightarrow \Phi_{Z_\A \circ \rsh} \in \DMRD(G)(\A).
    \]
    In the following, we fix a divisor $d$ of the order of $G$.
    We have
    \begin{align*}
        p_\ast^d (\Phi_{Z_\A \circ \rsh}) & = \sum_{w \in X^\ast_G} Z_\A \circ \rsh(w) p^d_\ast(w) = \sum_{w \in X^\ast_{G^d}} Z_\A \circ \rsh \circ p^d_\sharp(w) \, w \\
        & = \sum_{w \in X^\ast_{G^d}} \ev_0^\A \circ \overline{Z}_\A^\sh \circ p^d_\sharp(w) \, w,
    \end{align*}
    where the second equality follows from Lemma \ref{lem:astsharp} applied to $\phi=p^d$, and the third one from Corollary \ref{Zreg}.
    On the other hand, we have
    \begin{align*}
        & \exp\Bigg(\sum_{g^d=1} (\Phi_{Z_\A \circ \rsh}|x_g) x_1 \Bigg) i^\ast_d(\Phi_{Z_\A \circ \rsh}) = \exp\Bigg(\sum_{\substack{g^d=1 \\ g\neq 1}} Z_\A(x_g) x_1 \Bigg) \sum_{w \in X^\ast_G} Z_\A \circ \rsh(w) i^\ast_d(w) \\
        & = \sum_{w \in X^\ast_{G^d}} Z_\A \circ \rsh \circ i_d^\sharp(w) \exp\Bigg(\sum_{\substack{g^d=1 \\ g\neq 1}} Z_\A(x_g) x_1 \Bigg) \, w,
    \end{align*}
    where the second equality comes from Lemma \ref{lem:astsharp} applied to $\phi = i_d$. \newline
    Let $m \geq 0$ and $w \in X^\ast_{G^d}$ that does not starting with $x_1$. We have
    \begin{equation}
        \label{dist:lhs}
        \left( p_\ast^d (\Phi_{Z_\A \circ \rsh}) \Big| x_1^m w \right) = \ev_0^\A \circ \overline{Z}_\A^\sh \circ p^d_\sharp (x_1^m w).
    \end{equation}
    On the other hand, we have
    \begin{align}
        & \Bigg(\exp\Bigg(\sum_{g^d=1} (\Phi_{Z_\A \circ \rsh}|x_g) x_1 \Bigg) i^\ast_d(\Phi_{Z_\A \circ \rsh}) \Big| x_1^m w \Bigg) = \sum_{\substack{k+l=m \\ k,l\geq0}} Z_\A \circ \rsh \circ i_d^\sharp(x_1^k w) \delta_l \notag\\
        & = \sum_{\substack{k+l=m \\ k,l\geq0}} Z_\A\left((-1)^k x_h\left(x_1^k \, \sh \, \widetilde{\reg}_\sh(w^\prime)\right)\right) \delta_l, \label{dist:rhs}
    \end{align}
    where the first equality comes from identity \eqref{dist:rhs} and expression \eqref{exp_dec} and the second one from expression \eqref{barregshExpression}.
    \begin{caselist}
        \item Assume that $Z_\A \in \EDSD(G)(\A)$. Then, for any divisor $d$ of the order of $G$, $m \geq 0$ and $w \in X^\ast_{G^d}$ that does not starting with $x_1$, we have
        \begin{align*}
            \left( p_\ast^d (\Phi_{Z_\A \circ \rsh}) \Big| x_1^m w \right) & = \ev_0^\A \circ \sigma_{Z_\A} \circ \overline{Z}_\A^\sh \circ i_d^\sharp(x_1^m w) = \ev_0^\A \circ \sigma_{Z_\A} \circ \overline{Z}_\A^\sh (x_1^m w) \\
            & = \ev_0^\A \circ \sigma_{Z_\A}\Bigg( \sum_{\substack{k+l=m \\ k,l\geq0}} \frac{(-1)^k}{l!} Z_\A\left(x_h \left(x_1^k \, \sh \, \widetilde{\reg}_\sh(w^\prime)\right)\right) T^l\Bigg) \\
            & = \ev_0^\A \Bigg( \sum_{\substack{k+l=m \\ k,l\geq0}} (-1)^k Z_\A\left(x_h \left(x_1^k \, \sh \, \widetilde{\reg}_\sh(w^\prime)\right)\right) \sigma_{Z_\A}\left(\frac{T^l}{l!}\right)\Bigg) \\
            & = \ev_0^\A \Bigg( \sum_{\substack{k+l=m \\ k,l\geq0}} (-1)^k Z_\A\left(x_h \left(x_1^k \, \sh \, \widetilde{\reg}_\sh(w^\prime)\right)\right) \sum_{j=0}^l\delta_j \frac{T^{l-j}}{(l-j)!}\Bigg) \\
            & = \sum_{\substack{k+l=m \\ k,l\geq0}} (-1)^k Z_\A\left(x_h \left(x_1^k \, \sh \, \widetilde{\reg}_\sh(w^\prime)\right)\right) \delta_l,
        \end{align*}
        where the first equality comes from \eqref{dist:lhs} and assumption of $Z_\A$, third one from from the composition \eqref{compositionZsh} which interprets $\overline{Z}_\A^\sh$ as a composition involving $\overline{\reg}_\sh^T$, which enables us to use expression \eqref{barregshTExpression} and the fifth equality comes from \eqref{sigma_dec}. This implies that
        \begin{equation*}
            \left( p_\ast^d (\Phi_{Z_\A \circ \rsh}) \Big| x_1^m w \right) = \Bigg(\exp\Bigg(\sum_{g^d=1} (\Phi_{Z_\A \circ \rsh}|x_g) x_1 \Bigg) i^\ast_d(\Phi_{Z_\A \circ \rsh}) \Big| x_1^m w \Bigg),
        \end{equation*}
        which proves that $\Phi_{Z_\A \circ \rsh} \in \DMRD(G)(\A)$.
        \item Assume that $Z_\A \notin \EDSD(G)(A)$. Therefore, there exists divisor $d$ of the order of $G$, $m \geq 0$ and $w \in \h_{G^d}$ that does not start with $x_1$ such that
        \[
            \overline{Z}_\A^\sh \circ p_\sharp^d(x_1^m w) \neq \sigma_{Z_\A} \circ \overline{Z}_\A^\sh \circ i^\sharp_d(x_1^m w).
        \]
        From Proposition \ref{prop:dist_iff_noeval}, it follows that
        \[
            \ev_0^\A \circ \overline{Z}_\A^\sh \circ p_\sharp^d(x_1^m w) \neq \ev_0^\A \circ \sigma_{Z_\A} \circ \overline{Z}_\A^\sh \circ i^\sharp_d(x_1^m w).
        \]
       On the other hand,
        \begin{align*}
            & \Bigg(\exp\Bigg(\sum_{g^d=1} (\Phi_{Z_\A \circ \rsh}|x_g) x_1 \Bigg) i^\ast_d(\Phi_{Z_\A \circ \rsh}) \Big| x_1^m w \Bigg) = \sum_{\substack{k+l=m \\ k,l\geq0}} \ev_0^\A \circ \overline{Z}_\A^\sh \circ i^\sharp_d(x_1^k w) \, \delta_l \\
            & = \sum_{\substack{k+l=m \\ k,l\geq0}} \ev_0^\A \left( \sum_{\substack{k_1+k_2=k \\ k_1,k_2\geq0}}\frac{(-1)^{k_1}}{k_2}Z_\A( x_h (x_1^{k_1} \, \sh \, \widetilde{\reg}_\sh(w^\prime)) ) T^{k_2}\right) \, \delta_l \\
            & = \sum_{\substack{k+l=m \\ k,l\geq0}} (-1)^k Z_\A( x_h (x_1^k \, \sh \, \widetilde{\reg}_\sh(w^\prime)) ) \, \delta_l = \ev_0^\A \circ \sigma_{Z_\A} \circ \overline{Z}_\A^\sh \circ i_d^\sharp (x_1^m w).
        \end{align*}
        Therefore, thanks to this equality, identity \eqref{dist:lhs} and the assumption on $Z_\A$, it follows that
        \[
            \left( p_\ast^d (\Phi_{Z_\A \circ \rsh}) \Big| x_1^m w \right) \neq \Bigg(\exp\Bigg(\sum_{g^d=1} (\Phi_{Z_\A \circ \rsh}|x_g) x_1 \Bigg) i^\ast_d(\Phi_{Z_\A \circ \rsh}) \Big| x_1^m w \Bigg), 
        \]
        which implies that $\Phi_{Z_\A \circ \rsh} \notin \DMRD(G)(\A)$.
    \end{caselist}
\end{proof}

\subsection{Proof of Theorem \ref{thm:introzhao}}
We now prove Theorem \ref{thm:introzhao} stated in the introduction.
For this, we establish the equivalence of Theorem \ref{thm:introzhao} and Theorem \ref{thm:zhaoconj} in Proposition \ref{conj_equiv_thm}. After that we prove Theorem \ref{thm:zhaoconj}. These results collectively yield a proof of Theorem \ref{thm:introzhao}.

\begin{theorem}\label{thm:zhaoconj}
    Let $Z_\A : \h^0_G \to \A$ be a $\Q$-linear map and $d$ a divisor of the order of $G$ such that:
    \begin{enumerate}[label=(\roman*), leftmargin=*]
        \item \label{conj_i} $Z_\A \in \EDS(G)(\A)$; 
        \item \label{conj_ii} $Z_\A \circ p_\sharp^d(x_h) = Z_\A \circ i^\sharp_d(x_h)$, for any $h \in G^d \smallsetminus \{1\}$;
        \item \label{conj_iii} $Z_\A \circ p_\sharp^d(x_{h_1} x_{h_2}) = Z_\A \circ i^\sharp_d(x_{h_1} x_{h_2})$, for any $h_1 \in G^d \smallsetminus \{1\}$ and $h_2 \in G^d$;
    \end{enumerate}
    where \ref{conj_ii} and \ref{conj_iii} are equalities in $\A$. Then, the following equality in $\A[T]$ holds 
    \begin{equation}\label{eq:distrrel}
         \overline{Z}_\A^\sh \circ p_\sharp^d(x_{h_1} x_{h_2}) = \sigma_{Z_\A} \circ \overline{Z}_\A^\sh \circ i^\sharp_d(x_{h_1} x_{h_2}),
   \end{equation}
    for any $h_1, h_2 \in G^d \sqcup \{0\}$.
\end{theorem}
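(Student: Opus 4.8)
The plan is to prove \eqref{eq:distrrel} word by word, using $\Q$-linearity to treat each pair $(h_1,h_2)$ separately and expanding both sides explicitly in $\A[T]$. On the right I feed $i_d^\sharp(x_{h_1}x_{h_2})=x_{h_1}x_{h_2}$ through $\overline{Z}_\A^\sh$ by means of the shuffle-regularization formula \eqref{barregshTExpression} and then apply $\sigma_{Z_\A}$ via \eqref{exp_dec}--\eqref{sigma_dec}; on the left I first apply the explicit formula for $p_\sharp^d$ (namely $x_0\mapsto d\,x_0$ and $x_h\mapsto\sum_{g^d=h}x_g$), turning $x_{h_1}x_{h_2}$ into a sum of words $x_{g_1}x_{g_2}$, and then run each through $\overline{Z}_\A^\sh$. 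Comparing the coefficients of each $T^l$ reduces the whole statement to identities in $\A$ among the values $Z_\A(\,\cdot\,)$ of convergent weight-two words.

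I then split into cases according to whether the word starts with $x_1$ ($h_1=1$) or ends in $x_0$ ($h_2=0$). When $h_1\in G^d\smallsetminus\{1\}$ and $h_2\in G^d$ every word occurring is convergent, $\sigma_{Z_\A}$ acts as the identity on the resulting constants, and \eqref{eq:distrrel} is exactly hypothesis \ref{conj_iii}. Words ending in $x_0$ are rewritten by \eqref{barregshExpression} as (up to sign) convergent depth-one words or as $0$, so $x_0x_0$ gives $0=0$ while $x_{h_1}x_0$ and $x_1x_0$ collapse to the depth-one case treated in the last paragraph; the pure power $x_1x_1$ is dispatched directly by the shuffle homomorphism property in \ref{conj_i}.

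The genuinely $T$-dependent case is $x_1x_{h_2}$ with $h_2\in G^d\smallsetminus\{1\}$, and here lies the first delicate point. Since $p_\sharp^d(x_1)=\sum_{g^d=1}x_g=x_1+\sum_{g^d=1,\,g\neq1}x_g$, the left side produces, besides the expected term, the extra convergent words $x_{g_1}x_{g_2}$ with $g_1^d=1$, $g_1\neq1$. Matching the coefficient of $T$ is precisely hypothesis \ref{conj_ii} applied to $x_{h_2}$. Matching the constant term is the crux: one first uses \ref{conj_iii} for the word $x_{h_2}x_1$ to trade $\sum_{g}Z_\A(x_gx_1)$ for a double sum, after which the shuffle homomorphism property in \ref{conj_i} collapses the resulting symmetric combinations $Z_\A(x_ax_b)+Z_\A(x_bx_a)$ into products $Z_\A(x_a)Z_\A(x_b)$; these factor, by \ref{conj_ii} once more, as $\big(\sum_{g^d=1,\,g\neq1}Z_\A(x_g)\big)Z_\A(x_{h_2})=\delta_1\,Z_\A(x_{h_2})$, which is exactly the correction fed in by $\sigma_{Z_\A}$ through \eqref{sigma_dec}.

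The remaining cases all reduce to the weight-two depth-one finite distribution relation $d\sum_{g^d=h_2}Z_\A(x_0x_g)=Z_\A(x_0x_{h_2})$, which is \emph{not} among the hypotheses and is the main obstacle. I would derive it by double shuffle: both $Z_\A\circ i_d^\sharp$ and $Z_\A\circ p_\sharp^d$ satisfy the double shuffle relations on $\h^0_{G^d}$ (the former because $i_d^\sharp$ is the inclusion, the latter because $p_\sharp^d$ is a homomorphism for $\sh$ and $\st$ and commutes with $\q_G$, by Corollary \ref{cor:alg_morph} and Corollary \ref{sharp_comm}), so for their difference $F$ any double-shuffle identity expressing a depth-one word through depth-two words forces $F(x_0x_{h_2})$ to be a $\Q$-combination of $F$ on depth-two words, which vanishes by \ref{conj_iii}. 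Combining the shuffle relation with the $\q_G$-twisted harmonic relation coming from $y_{1,a}\st y_{1,b}$ yields $Z_\A(x_0x_{ab})=Z_\A(x_ax_b)+Z_\A(x_bx_a)-Z_\A(x_ax_{ab})-Z_\A(x_bx_{ab})$ for $a,b\in G^d\smallsetminus\{1\}$, covering every $h_2$ that factors as such a product (and $h_2=1$ via $b=a^{-1}$). The truly delicate remaining point is the case $|G^d|=2$, where the nontrivial $h_2$ is not a product of two nonidentity elements; there I would instead use the \emph{regularized} harmonic relation from $y_{1,1}\st y_{1,h_2}$, which — because the $T$-linear part of $\Gamma_{Z_\A}$ vanishes, so that $\rho_{Z_\A}^{-1}$ fixes $T$ — collapses to $Z_\A(x_0x_{h_2})=Z_\A(x_{h_2}x_1)-Z_\A(x_{h_2}x_{h_2})$, once again expressing the depth-one value through depth-two words controlled by \ref{conj_iii}.
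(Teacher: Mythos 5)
Your case analysis matches the paper's for $h_1\in G^d\smallsetminus\{1\}$, for $x_1x_1$, $x_0x_0$ and the words ending in $x_0$, and your treatment of $x_1x_{h_2}$ (matching the $T$-coefficient via \ref{conj_ii}, and the constant term via \ref{conj_iii} applied to $x_{h_2}x_1$ together with the shuffle relation and \ref{conj_ii}) is correct. For the crucial depth-one case $x_0x_h$ you take a genuinely different route: the paper proves an explicit combinatorial identity (Theorem \ref{thm:FDTd1}) writing $\mathrm{FDT}_{d,1}(h)$ as an integral combination of $\mathrm{FDS}(g_1,g_2)$, $\mathrm{RDS}(g)$ and $\mathrm{FDT}_{d,2}$ with $g,g_1,g_2$ running over $K_d$ and the fiber $\{g\mid g^d=h\}$ \emph{inside $G$}, whereas you work entirely inside $\h^0_{G^d}$: since $Z_\A\circ p_\sharp^d$ and $Z_\A\circ i_d^\sharp$ both satisfy the finite double shuffle relations there (correctly justified by Corollaries \ref{cor:alg_morph} and \ref{sharp_comm}), their difference $F$ kills the \emph{linear} element $\mathrm{FDS}(a,b)$ for $a,b\in G^d\smallsetminus\{1\}$, so $F(x_0x_{ab})$ is a combination of depth-two values of $F$, which vanish by \ref{conj_iii}. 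Whenever $h$ factors as $ab$ with $a,b\in G^d\smallsetminus\{1\}$ (always if $|G^d|\geq3$; for $h=1$ if $|G^d|=2$) this is complete and pleasantly avoids the fiber-wise combinatorics.

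The gap is in the residual cases, which are exactly where the real work lies. For $|G^d|=2$ and $h\neq1$ you invoke $\mathrm{RDS}(h)\in\ker Z_\A$, i.e.\ $Z_\A(x_0x_h)=Z_\A(x_hx_1)-Z_\A(x_hx_h)$; but to run the difference argument you also need the same linear identity for $Z_\A\circ p_\sharp^d$, i.e.\ $Z_\A\bigl(p_\sharp^d(\mathrm{RDS}(h))\bigr)=0$, and this does \emph{not} follow from \ref{conj_i}: the intrinsic regularization of $Z_\A\circ p_\sharp^d$ sends $x_1\mapsto T$, while $\overline{Z}_\A^\sh\circ p_\sharp^d$ sends $x_1\mapsto T+\delta_1$ — this mismatch is the very reason $\sigma_{Z_\A}$ appears in the statement. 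Equivalently, applying $\mathrm{RDS}(g)$ to each $Z_\A(x_0x_g)$ with $g^d=h$ produces the \emph{individual} words $x_gx_1$, $x_gx_g$ of $\h^0_G$, which are not controlled by \ref{conj_iii} (that hypothesis only controls fiber sums); reconciling them with $Z_\A(x_hx_1)-Z_\A(x_hx_h)$ requires precisely the relations $\mathrm{FDS}(g_1,g_2)$ with $g_1^d=h$, $g_2\in K_d\smallsetminus\{1\}$ occurring in Theorem \ref{thm:FDTd1}, which your argument never invokes. The case $G^d=\{1\}$ is not addressed at all: there $\h^0_{G^d}$ contains no usable $\mathrm{FDS}$ element, and $y_{1,1}\st y_{1,1}$ yields a tautology after $\rho_{Z_\A}^{-1}$, yet $\mathrm{FDT}_{d,1}(1)=d\sum_{g\in K_d}x_0x_g-x_0x_1$ is a nontrivial relation, which the paper obtains as $\sum_{g_1,g_2\in K_d\smallsetminus\{1\}}\mathrm{FDS}(g_1,g_2)+2\sum_{g\in K_d\smallsetminus\{1\}}\mathrm{RDS}(g)$. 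In short, for $|G^d|\leq 2$ the argument must descend to double shuffle relations indexed by the fibers of $p^d$ in $G$, and as written yours does not.
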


\begin{proposition} \label{conj_equiv_thm}
    Theorem \ref{thm:introzhao} is equivalent to Theorem \ref{thm:zhaoconj}.
\end{proposition}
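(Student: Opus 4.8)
The plan is to exhibit Theorem \ref{thm:introzhao} and Theorem \ref{thm:zhaoconj} as one and the same assertion, read in two languages, the bridge being the dictionary already implicit in the definitions of this subsection. In the cyclotomic setting $G=\mu_N$, $\A=\C$, $Z_\C$ of Example \ref{ex:polylog}, Zhao's standard relations translate as follows: the extended double shuffle relations are the membership $Z_\A\in\EDS(G)(\A)$, i.e. hypothesis \ref{conj_i}; the finite distribution relations are the equalities $Z_\A\circ p_\sharp^d = Z_\A\circ i^\sharp_d$ on $\h^0_{G^d}$; and the regularized distribution relations are the equalities $\overline{Z}_\A^\sh\circ p_\sharp^d = \sigma_{Z_\A}\circ\overline{Z}_\A^\sh\circ i^\sharp_d$ on $\h_{G^d}$. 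I would first record this dictionary, specializing to $\mu_N$ to make contact with the actual cyclotomic values and noting that the general-$G$ statement of Theorem \ref{thm:zhaoconj} contains the cyclotomic one.

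Next I would carry out the weight and depth bookkeeping, using that every letter of $X_{G^d}$ has degree one. The weight-two part of $\h_{G^d}$ is spanned by the words $x_{h_1}x_{h_2}$ with $h_1,h_2\in G^d\sqcup\{0\}$, so the weight-two regularized distribution relations are exactly \eqref{eq:distrrel}, ranging over these $h_1,h_2$ and over all divisors $d$. The weight-one part of $\h^0_{G^d}$ is spanned by $\{x_h : h\in G^d\smallsetminus\{1\}\}$, so the weight-one finite distribution relations are exactly \ref{conj_ii}; and a weight-two word of depth two lying in $\h^0_{G^d}$ must be $x_{h_1}x_{h_2}$ with $h_1\in G^d\smallsetminus\{1\}$ and $h_2\in G^d$, so the depth-two finite distribution relations are exactly \ref{conj_iii}. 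I would emphasize that the conclusion \eqref{eq:distrrel} also covers the cases $h_1\in\{0,1\}$, which are absent from \ref{conj_iii}: these are the genuinely regularized (divergent) instances together with the depth-one relations $x_0x_h$, and it is precisely the assertion that the latter need not be assumed that carries the content of Zhao's conjecture.

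Finally I would fix the meaning of ``consequence'': a relation among the universal cyclotomic multiple zeta values is a consequence of a family of relations exactly when it holds at every point of the scheme cut out by that family, that is, for every commutative $\Q$-algebra $\A$ and every $Z_\A$ satisfying the family. This functor-of-points reading is the very reason the framework is set up over an arbitrary $\A$ rather than over $\C$ alone. Under it, the sentence ``in weight two all regularized distribution relations are consequences of the extended double shuffle relations, the finite distribution relations of weight one, and the finite distribution relations of depth two'' reads as ``every $Z_\A$ satisfying \ref{conj_i}, \ref{conj_ii}, \ref{conj_iii} satisfies \eqref{eq:distrrel} for all $h_1,h_2\in G^d\sqcup\{0\}$ and all $d$'', which is precisely Theorem \ref{thm:zhaoconj} quantified over all divisors $d$; specializing the group to $\mu_N$ identifies this with Theorem \ref{thm:introzhao}, and conversely Theorem \ref{thm:introzhao} unwinds to the same universal implication, giving the equivalence. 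The main obstacle is not computational but this last conceptual identification: one must argue that deducibility in Zhao's sense coincides with vanishing on the corresponding scheme over \emph{every} $\Q$-algebra, and verify that the translation smuggles in no stray weight-two relation, in particular none of the unwanted depth-one finite distribution relations $x_0x_h$. Once this is secured, matching the three hypotheses and the conclusion term by term is routine.
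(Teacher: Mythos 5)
Your proposal is correct and follows essentially the same route as the paper: the paper's own proof is precisely the four-item dictionary you describe (hypothesis \ref{conj_i} $\leftrightarrow$ extended double shuffle, \ref{conj_ii} $\leftrightarrow$ weight-one finite distribution, \ref{conj_iii} $\leftrightarrow$ depth-two finite distribution, and \eqref{eq:distrrel} $\leftrightarrow$ weight-two regularized distribution), stated without further elaboration. Your version merely spells out the weight/depth bookkeeping and the ``consequence as universal implication over all $\A$'' reading that the paper leaves implicit, which is a faithful expansion rather than a different argument.
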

\begin{proof}
    From Theorem \ref{main_bijection_2}, it follows that:
    \begin{enumerate}[label=(\alph*), leftmargin=*]
        \item assumption \ref{conj_i} is equivalent to the extended double shuffle relations;
        \item assumption \ref{conj_ii} is equivalent to weight one finite distribution relations;
        \item assumption \ref{conj_iii} is equivalent to depth two finite distribution relations;
        \item identity \eqref{eq:distrrel} is equivalent to regularized distribution relations.
    \end{enumerate}
\end{proof}

\noindent We then dedicate the rest of this section entirely to the proof of Theorem \ref{thm:zhaoconj}.
First, let us introduce the following elements

\begin{definition}
    Let $d$ be a divisor of the order of $G$. For $h, h_2 \in G^d$, $h_1 \in G^d \smallsetminus \{1\}$, $g, g_1, g_2 \in G \smallsetminus \{1\}$ define\footnote{Here we use the notation $\mathrm{RDS}$ similarly to \cite{Zha10}, where extended double shuffle relations are called regularized double shuffle relations.} the following elements of $\h^0_G$:
    \begin{align*}
        \mathrm{FDT}_{d, 1}(h) &\displaystyle:= d \sum_{g^d=h} x_0 x_g - x_0 x_h, \\
        \mathrm{FDT}_{d, 2}(h_1,h_2) &\displaystyle:= \sum_{g_1^d=h_1} \sum_{g_2^d=h_2} x_{g_1} x_{g_2} - x_{h_1} x_{h_2}, \\
        \mathrm{FDS}(g_1, g_2) &:= x_0 x_{g_1g_2} + x_{g_1} x_{g_1g_2}+ x_{g_2} x_{g_1g_2} - x_{g_1} x_{g_2} - x_{g_2} x_{g_1}, \\
        \mathrm{RDS}(g) &:= x_0 x_g + x_g x_g - x_g x_1.
    \end{align*}
\end{definition}

\begin{lemma} \label{kerZA}
    Let $Z_\A : \h^0_G \to \A$ be a $\Q$-linear map and $d$ be a divisor of the order of $G$. For $h, h_2 \in G^d$, $h_1 \in G^d \smallsetminus \{1\}$, $g, g_1, g_2 \in G \smallsetminus \{1\}$, we have
    \begin{enumerate}[label=(\alph*), leftmargin=*, itemsep=2mm]
        \item \label{ker_and_FDT1} $\mathrm{FDT}_{d, 1}(h) \in \ker(Z_\A) \Longleftrightarrow Z_\A \circ p_\sharp^d(x_0 x_h) = Z_\A \circ i^\sharp_d(x_0 x_h)$;
        \item \label{ker_and_FDT2} $\mathrm{FDT}_{d, 2}(h_1, h_2) \in \ker(Z_\A) \Longleftrightarrow Z_\A \circ p_\sharp^d(x_{h_1} x_{h_2}) = Z_\A \circ i^\sharp_d(x_{h_1} x_{h_2})$;
        \item \label{ker_and_FDS} $\mathrm{FDS}(g_1, g_2) \in \ker(Z_\A) \Longleftrightarrow Z_\A \circ \q_G^{-1}(x_{g_1} \ast x_{g_2}) = Z_\A(x_{g_1} \, \sh \, x_{g_2})$;
        \item \label{ker_and_RDS} Assume $Z_\A : \h^0_{G,\sh} \to \A$ is an algebra homomorphism. Then
        \[
            \mathrm{RDS}(g) \in \ker(Z_\A) \Longleftrightarrow Z_{\A}^{\st}(x_1 \ast x_g) = Z_{\A}^{\st}(x_1) \, Z_{\A}^{\st}(x_g),
        \]
        where $Z_{\A}^{\st} = \rho^{-1}_{Z_\A} \circ Z_\A^\sh \circ \q_G^{-1} : \h^1_G \to \A[T]$.  
    \end{enumerate}
\end{lemma}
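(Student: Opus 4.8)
The plan is to prove all four equivalences by reducing each to one observation: applying $Z_\A$ to the displayed combination (one of $\mathrm{FDT}_{d,1}$, $\mathrm{FDT}_{d,2}$, $\mathrm{FDS}$, $\mathrm{RDS}$) yields exactly the difference of the two sides on the right. Since each of these combinations lies in $\h^0_G$ — the membership conditions follow from the non-triviality hypotheses $h_1, g_1, g_2, g \neq 1$, which guarantee that every word either begins with $x_0$ or with a letter $x_a$, $a \neq 1$ — belonging to $\ker(Z_\A)$ is then literally the vanishing of that difference. Parts \ref{ker_and_FDT1}, \ref{ker_and_FDT2}, \ref{ker_and_FDS} are then purely computational, and part \ref{ker_and_RDS} is the only one requiring the regularization machinery; that is where the work lies.

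For \ref{ker_and_FDT1} and \ref{ker_and_FDT2} I would expand using the explicit formulas for the algebra homomorphisms $p_\sharp^d$ and $i_d^\sharp$. From $p_\sharp^d(x_0)=d\,x_0$, $p_\sharp^d(x_h)=\sum_{g^d=h}x_g$ and the fact that $i_d^\sharp$ is the inclusion, one gets $p_\sharp^d(x_0 x_h)=d\sum_{g^d=h}x_0 x_g$ and $i_d^\sharp(x_0 x_h)=x_0 x_h$, so $Z_\A\circ p_\sharp^d(x_0 x_h)-Z_\A\circ i_d^\sharp(x_0 x_h)=Z_\A(\mathrm{FDT}_{d,1}(h))$; the depth-two case is identical, using $p_\sharp^d(x_{h_1}x_{h_2})=\sum_{g_1^d=h_1}\sum_{g_2^d=h_2}x_{g_1}x_{g_2}$. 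For \ref{ker_and_FDS} I would compute $x_{g_1}\st x_{g_2}=x_{g_1}x_{g_2}+x_{g_2}x_{g_1}+x_0 x_{g_1 g_2}$ from the harmonic product (writing $x_g=y_{1,g}$), then apply $\q_G^{-1}$ term by term via $\q_G^{-1}(x_a x_b)=x_a x_{ab}$ and the fact that depth-one words are fixed, obtaining $\q_G^{-1}(x_{g_1}\st x_{g_2})=x_{g_1}x_{g_1 g_2}+x_{g_2}x_{g_1 g_2}+x_0 x_{g_1 g_2}$, and subtract $x_{g_1}\,\sh\,x_{g_2}=x_{g_1}x_{g_2}+x_{g_2}x_{g_1}$; the difference is exactly $\mathrm{FDS}(g_1,g_2)$.

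For \ref{ker_and_RDS} the computation begins in the same vein: $x_1\st x_g = x_1 x_g + x_g x_1 + x_0 x_g$, so $\q_G^{-1}(x_1\st x_g)=x_1 x_g + x_g x_g + x_0 x_g$. The obstacle — and the reason the hypothesis that $Z_\A$ be a shuffle homomorphism enters — is that $x_1 x_g$ starts with $x_1$ and hence is not in $\h^0_G$, so $Z_\A^\sh$ cannot be evaluated on it directly but must pass through shuffle regularization. I would resolve this with $x_1 x_g = x_1\,\sh\,x_g - x_g x_1$, giving $Z_\A^\sh(x_1 x_g)=Z_\A^\sh(x_1)Z_\A^\sh(x_g)-Z_\A^\sh(x_g x_1)=T\,Z_\A(x_g)-Z_\A(x_g x_1)$, while the remaining words $x_g x_g$ and $x_0 x_g$ already lie in $\h^0_G$. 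Collecting terms yields $Z_\A^\sh(\q_G^{-1}(x_1\st x_g))=T\,Z_\A(x_g)+Z_\A(\mathrm{RDS}(g))$.

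The final step applies $\rho_{Z_\A}^{-1}$. Here I would use that $\Gamma_{Z_\A}^{-1}(u)=1+O(u^2)$, so in the notation of Step \ref{rho_inverse_eval} one has $\gamma_0=1$ and $\gamma_1=0$; hence $\rho_{Z_\A}^{-1}$ fixes both $1$ and $T$. As $Z_\A(x_g)$ and $Z_\A(\mathrm{RDS}(g))$ are scalars and $\rho_{Z_\A}^{-1}$ is $\A$-linear, this gives $Z_\A^{\st}(x_1\st x_g)=T\,Z_\A(x_g)+Z_\A(\mathrm{RDS}(g))$. Computing the right-hand side separately yields $Z_\A^{\st}(x_1)=T$ and $Z_\A^{\st}(x_g)=Z_\A(x_g)$, so $Z_\A^{\st}(x_1)\,Z_\A^{\st}(x_g)=T\,Z_\A(x_g)$. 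Comparing, the product formula holds if and only if $Z_\A(\mathrm{RDS}(g))=0$, which is the desired equivalence. I expect the only delicate point throughout to be this regularization step in \ref{ker_and_RDS}: tracking precisely where $\h^0_G$-membership fails and confirming that $\rho_{Z_\A}^{-1}$ acts trivially in degrees $\le 1$.
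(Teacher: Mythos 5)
Your proposal is correct and takes essentially the same approach as the paper: parts (a)--(c) by directly expanding $p_\sharp^d$, $i_d^\sharp$, $\st$, $\q_G^{-1}$ and $\sh$ to exhibit the stated element as the difference of the two sides, and part (d) via the same decomposition $x_1 x_g = x_1\,\sh\,x_g - x_g x_1$ followed by the observation that $\rho_{Z_\A}^{-1}$ fixes $1$ and $T$ because $\Gamma_{Z_\A}(u) = 1 + O(u^2)$. The paper writes out only (d) in full and declares (a)--(c) analogous, so your explicit computations for those parts simply fill in what the paper leaves to the reader.
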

\begin{proof}
    We present a complete proof for statement \ref{ker_and_RDS}. The proofs for statements \ref{ker_and_FDT1}, \ref{ker_and_FDT2}, and \ref{ker_and_FDS} proceed by similar arguments with appropriate modifications. We have
    \begin{align*}
        Z_{\A}^{\st}(x_1) & = \rho^{-1}_{Z_\A} \circ Z_\A^\sh \circ \q_G^{-1}(x_1) = T, \\
        Z_{\A}^{\st}(x_g) & = \rho^{-1}_{Z_\A} \circ Z_\A^\sh \circ \q_G^{-1}(x_g) = Z_\A(x_g), \\
        Z_{\A}^{\st}(x_1 \ast x_g) & = \rho^{-1}_{Z_\A} \circ Z_\A^\sh \circ \q_G^{-1}(x_1 x_g + x_g x_1 + x_0 x_g) = \rho^{-1}_{Z_\A} \circ Z_\A^\sh (x_1 x_g + x_g x_g + x_0 x_g) \\
        & = \rho^{-1}_{Z_\A} \circ Z_\A^\sh (x_1 \sh x_g - x_g x_1 + x_g x_g + x_0 x_g) \\
        & = \rho^{-1}_{Z_\A}\left(Z_\A^\sh(x_1) Z_\A^\sh(x_g)\right) + Z_\A(x_0 x_g + x_g x_g - x_g x_1) \\
        & = T \, Z_\A(x_g) + Z_\A(x_0 x_g + x_g x_g - x_g x_1).
    \end{align*}
    Therefore,
    \begin{align*}
        Z_{\A}^{\st}(x_1 \ast x_g) - Z_{\A}^{\st}(x_1) \, Z_{\A}^{\st}(x_g) & = T \, Z_\A(x_g) + Z_\A(x_0 x_g + x_g x_g - x_g x_1) - T \, Z_\A(x_g) \\
        & = Z_\A(x_0 x_g + x_g x_g - x_g x_1). 
    \end{align*}
    Hence,
    \[
        Z_{\A}^{\st}(x_1 \ast x_g) - Z_{\A}^{\st}(x_1) \, Z_{\A}^{\st}(x_g) = Z_\A\left(\mathrm{RDS}_{Z_\A}(g)\right),
    \]
    thus proving that the vanishing of the left-hand side is equivalent to that of the right-hand side, which is the wanted result.
\end{proof}

For any divisor $d$ of the order of $G$ denote
\[
    K_d \df \{g \in G | g^d = 1\} = \ker(p^d),
\]
which is a subgroup of $G$ of order $d$.\newline
The following theorem is a generalization of \cite[Theorem 4.6]{Zha10}.
\begin{theorem}
    \label{thm:FDTd1}
    Let $d$ be a divisor of the order of $G$. For $h \in G^d$, we have (equality in $\h^0_G$)
    \begin{align*}
        \mathrm{FDT}_{d, 1}(h) =
        \left\{
        \begin{aligned}
            & \sum_{g_1, g_2 \in K_d \smallsetminus \{1\}} && \mathrm{FDS}(g_1, g_2) + 2 \sum_{g \in K_d \smallsetminus \{1\}} \mathrm{RDS}(g) & \text{ if } h=1 \\
            & \sum_{\substack{g_1^d=h \\ g_2 \in K_d \smallsetminus \{1\}}} && \mathrm{FDS}(g_1, g_2) + \sum_{g^d=h} \mathrm{RDS}(g) - \mathrm{RDS}(h) & \text{ otherwise} \\
            &&& + \mathrm{FDT}_{d, 2}(h,1) - \mathrm{FDT}_{d, 2}(h,h) & 
        \end{aligned}
        \right.
    \end{align*}
\end{theorem}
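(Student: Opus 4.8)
The plan is to establish this as a purely formal identity in the free $\Q$-module $\h^0_G$, by expanding every occurrence of $\mathrm{FDS}$, $\mathrm{RDS}$ and $\mathrm{FDT}_{d,2}$ on the right-hand side and matching, word by word, the coefficients of the weight-two basis monomials. Since every term in sight has weight two and depth at most two, it is a $\Q$-linear combination of the admissible words $x_0 x_g$ (for $g \in G$) and $x_{g_1} x_{g_2}$ (for $g_1 \in G \smallsetminus \{1\}$, $g_2 \in G$); accordingly I would match separately the coefficients of the words of the form $x_0 x_g$ and those of the form $x_{g_1} x_{g_2}$, and treat the cases $h = 1$ and $h \neq 1$ independently, exactly as in the statement.

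The key structural input is that the fibre $C_h \df \{g \in G : g^d = h\}$ is a coset of $K_d = \ker(p^d)$ of cardinality $d = |K_d|$. This lets one reindex all inner sums through the bijections given by multiplication by elements of $K_d$; for example $\sum_{g_1^d = h,\, g_2 \in K_d \smallsetminus \{1\}} x_{g_1} x_{g_1 g_2} = \sum_{g_1,\, t \in C_h,\ t \neq g_1} x_{g_1} x_t$, and similarly for the remaining $\mathrm{FDS}$ summands, so that each collected sum is expressed purely in terms of the index sets $C_h$, $K_d$ and $K_d \smallsetminus \{1\}$. Collecting the $x_0 x_?$ contributions, the $\mathrm{FDS}$ sum yields $(d-1)\sum_{t \in C_h} x_0 x_t$, the $\mathrm{RDS}$ sum yields $\sum_{g \in C_h} x_0 x_g$, and $-\mathrm{RDS}(h)$ yields $-x_0 x_h$ (the $\mathrm{FDT}_{d,2}$ terms carry no $x_0$), adding up to $d \sum_{t \in C_h} x_0 x_t - x_0 x_h$, which is exactly the $x_0$-part of $\mathrm{FDT}_{d,1}(h)$.

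The remaining, and genuinely computational, task is to show that all words $x_{g_1} x_{g_2}$ with $g_1 \neq 1$ cancel on the right, as $\mathrm{FDT}_{d,1}(h)$ contains none. I would group these by the index sets the two letters belong to: the pure $C_h \times C_h$ contributions (from one $\mathrm{FDS}$ summand, from $-\mathrm{FDT}_{d,2}(h,h)$, and the diagonal of the $\mathrm{RDS}$ sum) cancel after coset reindexing; the $x_? x_1$ contributions (from the $g_2 = 1$ part of $\mathrm{FDT}_{d,2}(h,1)$, the $-x_g x_1$ part of the $\mathrm{RDS}$ sum, and the isolated $\pm x_h x_1$ terms) cancel; the two mixed families indexed by $K_d \smallsetminus \{1\}$ against $C_h$ coming from $\mathrm{FDS}$ cancel in pairs; and the leftover $C_h \times (K_d \smallsetminus \{1\})$ term from $\mathrm{FDS}$ cancels against the complementary part of $\mathrm{FDT}_{d,2}(h,1)$.

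The main obstacle, and the reason the correction terms $-\mathrm{RDS}(h)$, $\mathrm{FDT}_{d,2}(h,1)$ and $-\mathrm{FDT}_{d,2}(h,h)$ appear at all when $h \neq 1$, is that $h$ need not lie in the coset $C_h$ (it does precisely when $h^{d-1} = 1$), so the naive reindexing produces spurious $x_h x_h$ and $x_h x_1$ monomials whose coefficients must be tracked and cancelled by hand. Keeping this bookkeeping consistent — in particular that $1 \notin C_h$ and, in general, $h \notin C_h$ — is where all the care lies; the case $h = 1$ is easier because then $1 \in C_1 = K_d$ and the $\mathrm{FDS}$-plus-$\mathrm{RDS}$ combination already closes up without any $\mathrm{FDT}_{d,2}$ correction.
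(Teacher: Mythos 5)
Your proposal is correct and follows essentially the same route as the paper's proof: both are direct expansions of all $\mathrm{FDS}$, $\mathrm{RDS}$ and $\mathrm{FDT}_{d,2}$ terms, split into the cases $h=1$ and $h\neq 1$, using the coset reindexings $g_1\mapsto g_1g_2$ (for fixed $g_2\in K_d$) and $g_2\mapsto g_1g_2$ (for fixed $g_1$ with $g_1^d=h$) to collect the $x_0x_{?}$ words into $d\sum_{g^d=h}x_0x_g - x_0x_h$ and to cancel all remaining depth-two words. The cancellation groups you describe (the $C_h\times C_h$ block, the $x_{?}x_1$ terms, the two mutually cancelling $K_d\smallsetminus\{1\}$-versus-$C_h$ families, and the leftover against $\mathrm{FDT}_{d,2}(h,1)$) match the bookkeeping carried out in the paper.
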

\begin{proof}
    We proceed by considering two distinct cases for $h$.
    \begin{caselist}
        \item If $h=1$, we have
        \begin{align*}
            & \sum_{g_1, g_2 \in K_d \smallsetminus \{1\}} \mathrm{FDS}(g_1, g_2) + 2 \sum_{g \in K_d \smallsetminus \{1\}} \mathrm{RDS}(g) \\
            &
            = \sum_{g_1, g_2 \in K_d \smallsetminus \{1\}} (x_0 x_{g_1g_2} + x_{g_1} x_{g_1g_2} + x_{g_2} x_{g_1g_2} - x_{g_1} x_{g_2} - x_{g_2} x_{g_1}) + 2 \sum_{g \in K_d \smallsetminus \{1\}} (x_0 x_g + x_g x_g - x_g x_1) \\
            & = \sum_{g_1, g_2 \in K_d \smallsetminus \{1\}} x_0 x_{g_1g_2} + \sum_{g_1, g_2 \in K_d \smallsetminus \{1\}} x_{g_1} x_{g_1g_2} + \sum_{g_1, g_2 \in K_d \smallsetminus \{1\}} x_{g_1} x_{g_1g_2} - \sum_{g_1, g_2 \in K_d \smallsetminus \{1\}} x_{g_1} x_{g_2} \\
            & - \sum_{g_1, g_2 \in K_d \smallsetminus \{1\}} x_{g_1} x_{g_2} + 2 \sum_{g \in K_d \smallsetminus \{1\}} x_0 x_g + 2 \sum_{g \in K_d \smallsetminus \{1\}} x_g x_g - 2 \sum_{g \in K_d \smallsetminus \{1\}} x_g x_1 \\
            & = \sum_{g_1, g_2 \in K_d \smallsetminus \{1\}} x_0 x_{g_1g_2} + 2 \sum_{g_1, g_2 \in K_d \smallsetminus \{1\}} x_{g_1} x_{g_1g_2} - 2 \sum_{g_1, g_2 \in K_d \smallsetminus \{1\}} x_{g_1} x_{g_2} + 2 \sum_{g \in K_d \smallsetminus \{1\}} x_0 x_g \\
            & + 2 \sum_{g \in K_d \smallsetminus \{1\}} x_g x_g - 2 \sum_{g \in K_d \smallsetminus \{1\}} x_g x_1 \\
            & = \sum_{g \in K_d \smallsetminus \{1\}} x_0 x_{g^2} + \sum_{\substack{g_1, g_2 \in K_d \smallsetminus \{1\} \\ g_2 \neq g_1}} x_0 x_{g_1g_2} + 2 \sum_{g \in K_d \smallsetminus \{1\}} x_g x_1 + 2 \sum_{\substack{g_1, g_2 \in K_d \smallsetminus \{1\} \\ g_2 \neq g_1^{-1}}} x_{g_1} x_{g_1g_2} \\
            & - 2 \sum_{g \in K_d \smallsetminus \{1\}} x_g x_g - 2 \sum_{\substack{g_1, g_2 \in K_d \smallsetminus \{1\} \\ g_2 \neq g_1}} x_{g_1} x_{g_2} + 2 \sum_{g \in K_d \smallsetminus \{1\}} x_0 x_g + 2 \sum_{g \in K_d \smallsetminus \{1\}} x_g x_g - 2 \sum_{g \in K_d \smallsetminus \{1\}} x_g x_1 \\
            & = \sum_{g \in K_d \smallsetminus \{1\}} x_0 x_{g^2} + \sum_{\substack{g_1, g_2 \in K_d \smallsetminus \{1\} \\ g_2 \neq g_1}} x_0 x_{g_1g_2} + 2 \sum_{\substack{g_1, g_2 \in K_d \smallsetminus \{1\} \\ g_2 \neq g_1^{-1}}} x_{g_1} x_{g_1g_2} - 2 \sum_{\substack{g_1, g_2 \in K_d \smallsetminus \{1\} \\ g_2 \neq g_1}} x_{g_1} x_{g_2} \\
            & + 2 \sum_{g \in K_d \smallsetminus \{1\}} x_0 x_g \\
            & = \sum_{g \in K_d \smallsetminus \{1\}} x_0 x_{g^2} + \sum_{g_1 \in K_d \smallsetminus \{1\} } \Big(\sum_{g_2 \in K_d} x_0 x_{g_2} -x_0x_{g_1^2} - x_0 x_{g_1}\Big) + 2 \sum_{\substack{g_1, g_2 \in K_d \smallsetminus \{1\} \\ g_2 \neq g_1}} x_{g_1} x_{g_2} \\
            & - 2 \sum_{\substack{g_1, g_2 \in K_d \smallsetminus \{1\} \\ g_2 \neq g_1}} x_{g_1} x_{g_2} + 2 \sum_{g \in K_d \smallsetminus \{1\}} x_0 x_g \\
            & = \sum_{g \in K_d \smallsetminus \{1\}} x_0 x_{g^2} + (d-1) \sum_{g \in K_d} x_0 x_g - \sum_{g \in K_d \smallsetminus \{1\}} x_0x_{g^2} - \sum_{g \in K_d \smallsetminus \{1\}} x_0 x_g + 2 \sum_{g \in K_d \smallsetminus \{1\}} x_0 x_g \\
            & = (d-1) \sum_{g \in K_d} x_0 x_g + \sum_{g \in K_d \smallsetminus \{1\}} x_0 x_g = (d-1) \sum_{g \in K_d} x_0 x_g + \sum_{g \in K_d} x_0 x_g - x_0 x_1 \\
            & = d \sum_{g \in K_d} x_0 x_g - x_0 x_1 = \mathrm{FDT}_{d, 1}(1),
        \end{align*}
        where the sixth equality comes from
        \[
            \sum_{g_2 \in K_d} x_0 x_{g_1 g_2} = \sum_{g_2 \in K_d} x_0 x_{g_2}
        \]
        for any $g_1 \in K_d$, and from
        \[
            \sum_{\substack{g_1, g_2 \in K_d \smallsetminus \{1\} \\ g_2 \neq g_1^{-1}}} x_{g_1} x_{g_1g_2} = \sum_{\substack{g_1, g_2 \in K_d \smallsetminus \{1\} \\ g_2 \neq g_1}} x_{g_1} x_{g_2}.
        \]
        \item If $h \neq 1$, we have
        \begin{align*}
            & \sum_{\substack{g_1^d=h \\ g_2 \in K_d \smallsetminus \{1\}}} \mathrm{FDS}(g_1, g_2) + \sum_{g^d=h} \mathrm{RDS}(g) - \mathrm{RDS}(h) + \mathrm{FDT}_{d, 2}(h,1) - \mathrm{FDT}_{d, 2}(h,h) \\
            & = \sum_{\substack{g_1^d=h \\ g_2 \in K_d \smallsetminus \{1\}}} (x_0 x_{g_1g_2} + x_{g_1} x_{g_1g_2} + x_{g_2} x_{g_1g_2} - x_{g_1} x_{g_2} - x_{g_2} x_{g_1}) + \sum_{g^d=h} (x_0 x_g + x_g x_g - x_g x_1) \\
            & - (x_0 x_h + x_h x_h - x_h x_1) + \Big(\sum_{\substack{g_1^d=h \\ g_2 \in K_d}} x_{g_1} x_{g_2} - x_h x_1\Big) - \Big(\sum_{g_1^d=g_2^d=h} x_{g_1} x_{g_2} - x_h x_h\Big) \\
            & = \sum_{g_2 \in K_d \smallsetminus \{1\}} \sum_{g_1^d=h} x_0 x_{g_1g_2} + \sum_{g_1^d=h} \sum_{g_2 \in K_d \smallsetminus \{1\}} x_{g_1} x_{g_1g_2} + \sum_{g_2 \in K_d \smallsetminus \{1\}} \sum_{g_1^d=h} x_{g_2} x_{g_1g_2} \\
            & - \sum_{g_1^d=h} \sum_{g_2 \in K_d \smallsetminus \{1\}} x_{g_1} x_{g_2} - \sum_{g_1^d=h} \sum_{g_2 \in K_d \smallsetminus \{1\}} x_{g_2} x_{g_1} + \sum_{g^d=h} x_0 x_g + \sum_{g^d=h} x_g x_g - \sum_{g^d=h} x_g x_1 \\ 
            & - x_0 x_h + \sum_{g_1^d=h} \sum_{g_2 \in K_d} x_{g_1} x_{g_2} - \sum_{g_1^d=h} \sum_{g_2^d=h} x_{g_1} x_{g_2} \\ 
            & = \sum_{g_2 \in K_d \smallsetminus \{1\}} \sum_{g_1^d=h} x_0 x_{g_1} + \sum_{g_1^d=h} \sum_{\substack{g_2^d=h \\ g_2 \neq g_1}} x_{g_1} x_{g_2} + \sum_{g_2 \in K_d \smallsetminus \{1\}} \sum_{g_1^d=h} x_{g_2} x_{g_1} \\
            & - \sum_{g_1^d=h} \sum_{g_2 \in K_d \smallsetminus \{1\}} x_{g_1} x_{g_2} - \sum_{g_1^d=h} \sum_{g_2 \in K_d \smallsetminus \{1\}} x_{g_2} x_{g_1} + \sum_{g^d=h} x_0 x_g + \sum_{g^d=h} x_g x_g - \sum_{g^d=h} x_g x_1 \\
            & - x_0 x_h + \sum_{g_1^d=h} \sum_{g_2 \in K_d} x_{g_1} x_{g_2} - \sum_{g_1^d=h} \sum_{g_2^d=h} x_{g_1} x_{g_2} \\ 
            & = (d-1) \sum_{g^d=h} x_0 x_g - \sum_{g^d=h} x_g x_g + \sum_{g^d=h} x_g x_1 + \sum_{g^d=h} x_0 x_g + \sum_{g^d=h} x_g x_g - \sum_{g^d=h} x_g x_1 - x_0 x_h \\ 
            & = d \sum_{g^d=h} x_0 x_g - x_0 x_h = \mathrm{FDT}_{d, 1}(h),
        \end{align*}
        where the third equality follows from
        \[
            \sum_{g_1^d=h} x_0 x_{g_1 g_2} = \sum_{g_1^d=h} x_0 x_{g_1},
        \]
        for any $g_2 \in K_d$; and from
        \[
            \sum_{g_2 \in K_d} x_{g_1} x_{g_1 g_2} = \sum_{g_2^d=h} x_{g_1} x_{g_2},
        \]
        for any $g_1$ such that $g_1^d=h$; and from
        \[
            \sum_{g_1^d=h} x_{g_2} x_{g_1g_2} = \sum_{g_1^d=h} x_{g_2} x_{g_1},
        \]
        for any $g_2 \in K_d$.
    \end{caselist}
\end{proof}

\begin{proof}[Proof of Theorem \ref{thm:zhaoconj}]
    Depending of the values of $h_1$ and $h_2$, we summarize all possible cases in the following table:
    \begin{table}[H]
    \begin{tabular}{|c|c|c|c|}
         \hline
         \diagbox{$h_1$}{$h_2$} & $0$ & $1$ & $G^d \smallsetminus \{1\}$ \\
         \hline
         $0$ & \ref{h1_0_h2_0} & \ref{h1_0_h2_1} & \ref{h1_0_h2_Gd1} \\
         \hline
         $1$ & \ref{h1_Gd_h2_0} & \ref{h1_1_h2_1} & \ref{h1_1_h2_Gd1} \\
         \hline
         $G^d \smallsetminus \{1\}$ & \ref{h1_Gd_h2_0} & \ref{h1_Gd1_h2_Gd} & \ref{h1_Gd1_h2_Gd} \\
         \hline
    \end{tabular}
    \end{table}
    \begin{caselist}
        \item \label{h1_Gd1_h2_Gd}$h_1 \in G^d \smallsetminus \{1\}$ and $h_2 \in G^d$. This follows immediately from assumption \ref{conj_iii}.
        \item \label{h1_1_h2_1}$h_1 = h_2 = 1$. This follows from \ref{Zpx1m_Zix1m} of the proof of Proposition \ref{prop:dist_iff_noeval} with $m=2$. 
        \item \label{h1_0_h2_0}$h_1 = h_2 = 0$. We have
        \begin{equation}
            \label{dist_at_x0_p}
            \overline{Z}_\A^\sh \circ p_\sharp^d(x_0) = d \overline{Z}_\A^\sh(x_0) = 0.   
        \end{equation}
        Then
        \[
            \overline{Z}_\A^\sh \circ p_\sharp^d(x_0^2) =  \overline{Z}_\A^\sh \circ p_\sharp^d\left(\frac{1}{2} x_0 \, \sh \, x_0\right) = \frac{1}{2} \overline{Z}_\A^\sh \circ p_\sharp^d(x_0)^2 = 0,   
        \]
        where the second equality follows from the fact that $\overline{Z}_\A^\sh \circ p_\sharp^d : \h^0{G^d, \sh} \to \A[T]$ is an algebra homomorphism and the last one from identity \eqref{dist_at_x0_p}.
        On the other hand,
        \begin{equation}
            \label{dist_at_x0_i}
            \overline{Z}_\A^\sh \circ i^\sharp_d(x_0) = \overline{Z}_\A^\sh(x_0) = 0. 
        \end{equation}
        Then
        \[
            \sigma_{Z_\A} \circ \overline{Z}_\A^\sh \circ i^\sharp_d(x_0^2) = \sigma_{Z_\A} \circ \overline{Z}_\A^\sh \circ i^\sharp_d\left(\frac{1}{2}x_0 \, \sh \, x_0\right) = \frac{1}{2} \sigma_{Z_\A} \left(\overline{Z}_\A^\sh \circ i^\sharp_d(x_0)^2\right) = 0, 
        \]
        where the second equality follows from the fact that $\overline{Z}_\A^\sh \circ i^\sharp_d : \h^0{G^d, \sh} \to \A[T]$ is an algebra homomorphism and the last one from identity \eqref{dist_at_x0_i}.
        \item \label{h1_0_h2_1}$h_1 = 0$ and $h_2 = 1$. From Theorem \ref{thm:FDTd1}, we have that
        \begin{equation}
            \label{FDTd11_indentity}
            Z_\A\left(\mathrm{FDT}_{Z_\A, d, 1}(1)\right) = \sum_{g_1, g_2 \in K_d \smallsetminus \{1\}} Z_\A\left(\mathrm{FDS}_{Z_\A}(g_1, g_2)\right) + 2 \sum_{g \in K_d \smallsetminus \{1\}} Z_\A\left(\mathrm{RDS}_{Z_\A}(g)\right).
        \end{equation}
        From assumption \ref{conj_i}, one may apply Lemma \ref{kerZA} \ref{ker_and_FDS} and \ref{ker_and_RDS} to conclude that the right hand side of equality \eqref{FDTd11_indentity} is equal to zero. The result then follows by applying Lemma \ref{kerZA} \ref{ker_and_FDT1}.
        \item \label{h1_0_h2_Gd1}$h_1 = 0$ and $h_2 = h \in G^d \smallsetminus \{1\}$. From Theorem \ref{thm:FDTd1}, we have that
        \begin{align}
            \label{FDTd1h_indentity}
            Z_\A\left(\mathrm{FDT}_{Z_\A, d, 1}(h)\right) & = \sum_{\substack{g_1^d=h \\ g_2 \in K_d \smallsetminus \{1\}}} Z_\A\left(\mathrm{FDS}_{Z_\A}(g_1, g_2)\right) + \sum_{g^d=h} Z_\A\left(\mathrm{RDS}_{Z_\A}(g)\right) \\
            & - Z_\A\left(\mathrm{RDS}_{Z_\A}(h)\right)
            + Z_\A\left(\mathrm{FDT}_{Z_\A, d, 2}(h,1)\right) - Z_\A\left(\mathrm{FDT}_{Z_\A, d, 2}(h,h)\right). \notag
        \end{align}
        From assumption \ref{conj_i} (resp. \ref{conj_iii}), one may apply Lemma \ref{kerZA} \ref{ker_and_FDS} and \ref{ker_and_RDS} (resp. \ref{ker_and_FDT2}) to conclude that the right hand side of equality \eqref{FDTd1h_indentity} is equal to zero. The result then follows by applying Lemma \ref{kerZA} \ref{ker_and_FDT1}.
        \item \label{h1_Gd_h2_0}$h_1 = h \in G^d$, $h_2 = 0$. We have
        \begin{align*}
            \overline{Z}_\A^\sh \circ p_\sharp^d(x_h x_0) & = \overline{Z}_\A^\sh \circ p_\sharp^d(x_h \, \sh \, x_0 - x_0 x_h) = \overline{Z}_\A^\sh \circ p_\sharp^d(x_h \, \sh \, x_0) - \overline{Z}_\A^\sh \circ p_\sharp^d(x_0 x_h) \\
            & = \overline{Z}_\A^\sh \circ p_\sharp^d(x_h) \, \overline{Z}_\A^\sh \circ p_\sharp^d(x_0) - Z_\A \circ p_\sharp^d(x_0 x_h) = - Z_\A \circ p_\sharp^d(x_0 x_h) \\
            & = - Z_\A \circ i_d^\sharp(x_0 x_h), 
        \end{align*}
        where the second to last equality follows from equality \eqref{dist_at_x0_p} and the last one from the identity obtained in \ref{h1_0_h2_1} ($h=1$) and \ref{h1_0_h2_Gd1} ($h\neq1$). On the other hand,
        \begin{align*}
            \sigma_{Z_\A} \circ \overline{Z}_\A^\sh \circ i^\sharp_d(x_h x_0) & = \sigma_{Z_\A} \circ \overline{Z}_\A^\sh \circ i^\sharp_d(x_h \, \sh \, x_0 - x_0 x_h) \\
            & = \sigma_{Z_\A}\left(\overline{Z}_\A^\sh \circ i^\sharp_d(x_h \, \sh \, x_0) - \overline{Z}_\A^\sh \circ i^\sharp_d(x_0 x_h)\right) \\
            & = \sigma_{Z_\A}\left(\overline{Z}_\A^\sh \circ i^\sharp_d(x_h) \, \overline{Z}_\A^\sh \circ i^\sharp_d(x_0) - Z_\A^\sh \circ i^\sharp_d(x_0 x_h)\right) \\
            & = - Z_\A \circ i_d^\sharp(x_0 x_h),
        \end{align*}
        where the second to last equality follows from equality \eqref{dist_at_x0_i} and the last one from the fact that $x_0 x_h \in \h^0_{G^d}$ and $\sigma_{Z_\A}(1)=1$.
        \item \label{h1_1_h2_Gd1}$h_1 = 1$, $h_2 = h \in G^d \smallsetminus \{1\}$. We have
         \begin{align*}
            \overline{Z}_\A^\sh \circ p_\sharp^d(x_1 x_h) & = \overline{Z}_\A^\sh \circ p_\sharp^d(x_1 \, \sh \, x_h - x_h x_1) = \overline{Z}_\A^\sh \circ p_\sharp^d(x_1 \, \sh \, x_h) - \overline{Z}_\A^\sh \circ p_\sharp^d(x_h x_1) \\
            & = \overline{Z}_\A^\sh \circ p_\sharp^d(x_1) \, \overline{Z}_\A^\sh \circ p_\sharp^d(x_h) - Z_\A \circ p_\sharp^d(x_h x_1) \\
            & = \sigma_{Z_\A} \circ \overline{Z}_\A^\sh \circ i^\sharp_d(x_1) \, Z_\A^\sh \circ p_\sharp^d(x_h) - Z_\A \circ i^\sharp_d(x_h x_1) \\
            & = \sigma_{Z_\A} \left(\overline{Z}_\A^\sh \circ i^\sharp_d(x_1)\right) \, Z_\A^\sh \circ i^\sharp_d(x_h) - \sigma_{Z_\A} \left(Z_\A \circ i^\sharp_d(x_h x_1)\right) \\
            & = \sigma_{Z_\A}\left(\overline{Z}_\A^\sh \circ i^\sharp_d(x_1) \, Z_\A^\sh \circ i^\sharp_d(x_h) - Z_\A \circ i^\sharp_d(x_h x_1)\right) \\
            & = \sigma_{Z_\A}\left(\overline{Z}_\A^\sh \circ i^\sharp_d(x_1 \, \sh \, x_h - x_h x_1)\right) = \sigma_{Z_\A} \circ \overline{Z}_\A^\sh \circ i^\sharp_d(x_1 x_h), 
        \end{align*}
        where the third equality follows from the fact that $\overline{Z}_\A^\sh \circ p_\sharp^d : \h^0_{G^d, \sh} \to \A[T]$ is an algebra homomorphism; the fourth one from \ref{Zpx1m_Zix1m} of the proof of Proposition \ref{prop:dist_iff_noeval} and assumption \ref{conj_iii}, the fifth one from assumption \ref{conj_ii} and the fact that $\sigma_{Z_\A}(1)=1$, the sixth one from $\A$-linearity of $\sigma_{Z_\A}$ and the seventh one from the fact that $\overline{Z}_\A^\sh \circ i^\sharp_d : \h^0_{G^d, \sh} \to \A[T]$ is an algebra homomorphism.
    \end{caselist}
\end{proof}

\begin{remark}
    Applying the reasoning analogous to that in Proposition \ref{conj_equiv_thm}, one can verify that the statement in \ref{h1_1_h2_Gd1} is equivalent to \cite[Conjecture 7.1]{Zha10}. Consequently, the proof provided in \ref{h1_1_h2_Gd1} also establishes this conjecture.
\end{remark}

    \appendix
    \section{Quasi-shuffle products}

We will consider now a more general point of view for all the products considered before. For this, suppose that we have a set $\LL$, called the set of \emph{letters}, and a commutative and associative product $\diamond$ in $\LL$. Extending this product bi-linearly to $\Q \LL$ gives a commutative non-unital $\Q$-algebra $(\Q \LL,  \diamond)$. We will be interested in $\Q$-linear combinations of multiple letters of $\LL$, i.e., in elements of $\QL$. Here and in the following we call monic monomials in $\QL$ \emph{words}. Moreover, we call the degree of this monomial the \emph{length} of the word. 
	
\begin{definition}
Define the \emph{quasi-shuffle product}  $\qsh$ on $\QL$ as the $\Q$-bilinear product which satisfies $1 \qsh w = w \qsh 1 = w$ for any word $w\in \QL$ and
    \begin{align*}
        a w \qsh b v = a (w \qsh b v) + b (a w \qsh v) + (a \diamond b) (w \qsh  v) 
    \end{align*}
    for any letters $a,b \in \LL$ and words $w, v \in \QL$. 
\end{definition}
This gives a commutative $\Q$-algebra $(\QL,\qsh)$ as shown in~\cite{HI}.  Moreover, one can equip this algebra with the structure of a Hopf algebra \cite[Section 3]{HI}, where the coproduct is given for $w \in \QL$ by the deconcatenation coproduct
\begin{align*}
    \Delta(w) = \sum_{uv = w} u \otimes v\,.
\end{align*}

Now we consider the completed dual Hopf algebra of $(\QL,\qsh,\Delta)$. For this, we need to give a restriction on $\diamond$, that is, we assume that any $a \in \LL$ just appears in $b \diamond c$ for finitely many $b,c \in \LL$. More precisely, define for $a,b,c \in \LL$ the coefficients $\lambda^{a}_{b,c} \in \Q$ by $$b \diamond c = \sum_{a \in \LL} \lambda^{a}_{b,c} a.$$ In the following, we will assume that for any $a\in \LL$ there are just finitely many $b,c \in \LL$ with  $\lambda^{a}_{b,c} \neq 0$.

As a consequence of Lemma \ref{lem:coproductonp} below, the dual completed Hopf algebra of $(\QL,\qsh,\Delta)$ is then given by $(\ALL, \operatorname{conc}, \widehat{\Delta}_{\qsh})$, where $\operatorname{conc}$ is the concatenation product and $\widehat{\Delta}_{\qsh} : \ALL \to \ALL^{\hat{\otimes} 2}$ is the unique $\Q$-algebra homomorphism such that 
\begin{equation*}
    \widehat{\Delta}_{\qsh}(a) = a \otimes 1 + 1 \otimes a + \sum_{ b,c \in \LL} \lambda^{a}_{b,c} \,\,b \otimes c
\end{equation*}
for any $a \in \LL$. Since we just consider $\diamond$ such that for any $a\in \LL$ the coefficients $ \lambda^{a}_{b,c}$ are zero for almost all $b,c \in \LL$ and therefore $\widehat{\Delta}_{\qsh}$ is well-defined.

\begin{definition}
    \label{def:scalarproduct}
    Define the pairing
    \begin{align*}
        \ALL \otimes \QL &\longrightarrow \A\\
        \psi \otimes w &\longmapsto  (\psi  \mid w),
    \end{align*}
    where, as before, $(\psi  \mid w)$ denotes the coefficient of $w$ in $\psi$.
\end{definition}

\begin{lemma}\label{lem:coproductonp}
    For any $\Phi \in \ALL$ we have
    \begin{align*}
        \widehat{\Delta}_{\qsh}(\Phi) = \sum_{u,v \in \LL^*} (\Phi \mid u \qsh v) u \otimes v.
    \end{align*}
\end{lemma}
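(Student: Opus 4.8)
The plan is to leverage the uniqueness built into the definition of $\widehat{\Delta}_{\qsh}$: it is the unique ($\A$-linear, continuous) algebra homomorphism $\ALL\to\ALL^{\hat\otimes 2}$ for the concatenation product whose values on the letters are prescribed. Accordingly, I would introduce the candidate map
\[
    \delta:\ALL\longrightarrow\ALL^{\hat\otimes 2},\qquad \delta(\Phi)\df\sum_{u,v\in\LL^*}(\Phi\mid u\qsh v)\,u\otimes v,
\]
which is well defined and continuous because each $u\qsh v$ is a \emph{finite} $\Q$-linear combination of words, so that each coefficient $(\Phi\mid u\qsh v)$ is a well-defined element of $\A$ depending on only finitely many coefficients of $\Phi$. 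The statement of the lemma is exactly $\delta=\widehat{\Delta}_{\qsh}$, and by the cited uniqueness it suffices to prove that $\delta$ is an algebra homomorphism for concatenation and that $\delta(a)=\widehat{\Delta}_{\qsh}(a)$ for every letter $a\in\LL$.

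For the homomorphism property I would work at the level of coefficients, using two ingredients: first, that the concatenation product on $\ALL$ is dual to the deconcatenation coproduct, i.e. $(\Phi\Psi\mid w)=\sum_{w_1w_2=w}(\Phi\mid w_1)(\Psi\mid w_2)$ for every word $w$; and second, the bialgebra compatibility $\Delta(u\qsh v)=\Delta(u)\qsh\Delta(v)$ in $(\QL,\qsh,\Delta)$, which is part of the Hopf structure established in \cite{HI}. Expanding $(\Phi\Psi\mid u\qsh v)$ by these two facts yields, for all words $u,v$,
\[
    \big(\delta(\Phi\Psi)\mid u\otimes v\big)=(\Phi\Psi\mid u\qsh v)=\sum_{\substack{u_1u_2=u\\ v_1v_2=v}}(\Phi\mid u_1\qsh v_1)(\Psi\mid u_2\qsh v_2)=\big(\delta(\Phi)\,\delta(\Psi)\mid u\otimes v\big),
\]
the last equality being the coefficient of $u\otimes v$ in the concatenation product on $\ALL^{\hat\otimes 2}$. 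As this holds for all $u,v$ and $\delta$ is plainly $\A$-linear, $\delta$ is a continuous $\A$-algebra homomorphism.

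It then remains to compute $\delta$ on a single letter $a$, i.e. the coefficients $(a\mid u\qsh v)$. Since every word appearing in $u\qsh v$ has length at least $\max(|u|,|v|)$, only pairs with $|u|,|v|\le 1$ can produce the length-one word $a$: the pairs $(a,1)$ and $(1,a)$ each contribute $1$, while a pair of letters $(b,c)$ contributes the length-one part of $b\qsh c$, namely $b\diamond c=\sum_e\lambda^e_{b,c}e$, hence $\lambda^a_{b,c}$. This gives
\[
    \delta(a)=a\otimes 1+1\otimes a+\sum_{b,c\in\LL}\lambda^a_{b,c}\,b\otimes c=\widehat{\Delta}_{\qsh}(a).
\]
Thus $\delta$ and $\widehat{\Delta}_{\qsh}$ are continuous $\A$-algebra homomorphisms agreeing on the generators, so they coincide on the dense subalgebra $\A\langle\LL\rangle$ and hence on all of $\ALL$, which is the claimed formula. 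I expect the only genuine subtlety to be the homomorphism property, which hinges on the bialgebra axiom relating $\qsh$ to deconcatenation; everything else is routine bookkeeping with the pairing, and the completed setting intervenes only through the continuity of the two maps.
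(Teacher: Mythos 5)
Your proof is correct, but it takes a genuinely different route from the paper's. The paper proves the coefficient identity $(\widehat{\Delta}_{\qsh}(w)\mid u\otimes v)=(w\mid u\qsh v)$ directly by induction on the length of $w$, using only the multiplicativity of $\widehat{\Delta}_{\qsh}$ with respect to concatenation and the defining recursion for $\qsh$; it never invokes the compatibility $\Delta(u\qsh v)=\Delta(u)\qsh\Delta(v)$ of the deconcatenation coproduct with the quasi-shuffle product. You instead package the statement as an equality of two continuous algebra homomorphisms $\ALL\to\ALL^{\hat\otimes 2}$ agreeing on the letters (and on $1$), and the only nontrivial input --- the multiplicativity of your map $\delta$ --- is exactly the dual of that bialgebra axiom, which you import from \cite{HI}. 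Your identification of the length-one part of $u\qsh v$ for the base case is correct, as is the observation that local finiteness of $\diamond$ makes $\delta$ well defined and continuous. What each approach buys: yours is shorter and conceptually transparent (the lemma is literally the statement that $(\ALL,\operatorname{conc},\widehat{\Delta}_{\qsh})$ is the graded dual of $(\QL,\qsh,\Delta)$), but it outsources the essential combinatorial content to the bialgebra axiom in \cite{HI}; the paper's induction is more pedestrian but self-contained, effectively reproving that compatibility in the dual picture. Since the paper does cite \cite{HI} for the Hopf algebra structure, your appeal to it is legitimate, and your argument would stand as written once the routine checks (unitality of $\delta$, density of $\A\langle\LL\rangle$) are spelled out.
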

\begin{proof} Notice that the statement is equivalent to $(\widehat{\Delta}_{\qsh}(\Phi) \mid  u \otimes v) =  (\Phi \mid u \qsh v)$ for all $u,v \in \LL^*$. Due to linearity, it suffices to show \begin{align}\label{eq:dualwords}
    (\widehat{\Delta}_{\qsh}(w) \mid  u \otimes v) =  (w \mid u \qsh v)
\end{align} for words $w,u,v \in \LL^*$. We will perform induction on the length of $w$. In the length case $1$, that is, when $w=a \in \LL$ we have
\begin{align*}
    \widehat{\Delta}_{\qsh}(a) = a \otimes 1 + 1 \otimes a + \sum_{ a_1,a_2 \in \LL} \lambda^{a}_{a_1,a_2} \,\,a_1 \otimes a_2.
\end{align*}
By the definition of $u \qsh v$ we see that $u=a, v=1$ or $u=1,v=a$ or $u=a_1, v=a_2$ (with multiplicity $\lambda^{a}_{a_1,a_2}$) are exactly cases where $a$ can appear as a coefficient of $u \qsh v$ and therefore $(\widehat{\Delta}_{\qsh}(a) \mid  u \otimes v) =  (a \mid u \qsh v)$.

Next, notice that \eqref{eq:dualwords} is trivial for any $w$ when $u$ or $v$ is the empty word. Therefore, we can restrict ourselves to the case $w=aw'$, $u=bu'$ and $v=cv'$, for letters $a,b,c \in \LL$ and words $w',u',v' \in \LL^*$. Then we want to show that 
\begin{align}\label{eq:deltaawbucv}
    (\widehat{\Delta}_{\qsh}(aw') \mid  bu' \otimes cv') =  (aw' \mid bu' \qsh cv').
\end{align} 
For the left-hand side of \eqref{eq:deltaawbucv} we have
\begin{align*}
(\widehat{\Delta}_{\qsh}(aw') \mid  bu' \otimes cv') =\,\,  &(\widehat{\Delta}_{\qsh}(a)\widehat{\Delta}_{\qsh}(w') \mid  bu' \otimes cv')\\
=\,\, &( (a \otimes 1)\widehat{\Delta}_{\qsh}(w')\mid  bu' \otimes cv') + ((1 \otimes a)\widehat{\Delta}_{\qsh}(w')\mid  bu' \otimes c'v) \\
&+ \sum_{ a_1,a_2 \in \LL} \lambda^{a}_{a_1,a_2} \,\,( (a_1 \otimes a_2)\widehat{\Delta}_{\qsh}(w')\mid  bu' \otimes cv') \\
=\,\, &\delta_{a,b} (\widehat{\Delta}_{\qsh}(w') \mid  u' \otimes cv')  + \delta_{a,c} (\widehat{\Delta}_{\qsh}(w') \mid  bu' \otimes v')  \\
&+ \sum_{ a_1,a_2 \in \LL} \lambda^{a}_{a_1,a_2} \delta_{a_1,b} \delta_{a_2,c}\, ( \widehat{\Delta}_{\qsh}(w')\mid  u' \otimes v')\\
=\,\, &\delta_{a,b} (\widehat{\Delta}_{\qsh}(w') \mid  u' \otimes cv')  + \delta_{a,c} (\widehat{\Delta}_{\qsh}(w') \mid  bu' \otimes v')  \\
&+\lambda^{a}_{b,c} \, ( \widehat{\Delta}_{\qsh}(w')\mid  u' \otimes v').
\end{align*}
Here, $\delta_{a,b}$ denotes the Kronecker delta. Using the definition of $bu' \qsh cv'$, the right-hand side of \eqref{eq:deltaawbucv} is given by
\begin{align*}
(aw' \mid bu' \qsh cv') &= (aw' \mid b(u' \qsh cv')) + (aw' \mid c(bu'\qsh v')) + (aw' \mid (b \diamond c)(u'\qsh v')) \\
&= \delta_{a,b} (w' \mid u' \qsh cv') + \delta_{a,c} (w' \mid bu'\qsh v') + \lambda^a_{b,c} (w' \mid u' \qsh v').
\end{align*}
Equation \eqref{eq:deltaawbucv} then follows from the induction hypothesis. 
\end{proof}

As before, denote the set grouplike elements of $\ALL$ for the coproduct $\widehat{\Delta}_{\qsh}$ by
\begin{equation*}
    \G(\ALL) = \{ \Phi \in \ALL^{\times} \, | \, \widehat{\Delta}_{\qsh}(\Phi) = \Phi \otimes \Phi \}.
\end{equation*}

Given a $\Q$-linear map $\varphi: \QL \rightarrow \A$ we define the following element in $\ALL$
\begin{align*}
    \Phi_\varphi \df \sum_{w \in \LL^*} \varphi(w) w\,.
\end{align*}

\begin{proposition} \label{alghom_iff_grplk}
    The following two statements are equivalent
    \begin{enumerate}[label=(\alph*), leftmargin=*]
        \item The map $\varphi: (\QL,\qsh) \rightarrow \A$ is an $\Q$-algebra homomorphism.
        \item We have $\Phi_\varphi \in \G(\ALL)$.
    \end{enumerate}
\end{proposition}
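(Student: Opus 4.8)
The plan is to translate the grouplikeness of $\Phi_\varphi$ into a statement about its coefficients $(\Phi_\varphi \mid w) = \varphi(w)$ and to compare this directly with the multiplicativity of $\varphi$. First I would record a reduction to words: since $\LL^*$ is a $\Q$-basis of $\QL$ and both $\qsh$ and $\varphi$ are $\Q$-(bi)linear, the map $\varphi \colon (\QL,\qsh) \to \A$ is a $\Q$-algebra homomorphism if and only if $\varphi(1)=1$ and $\varphi(u \qsh v) = \varphi(u)\varphi(v)$ for all words $u,v \in \LL^*$.

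Next I would apply Lemma \ref{lem:coproductonp} to $\Phi = \Phi_\varphi$. Using $(\Phi_\varphi \mid w) = \varphi(w)$ together with the $\Q$-linearity of the functional $w \mapsto (\Phi_\varphi \mid w)$, this yields
\[
    \widehat{\Delta}_{\qsh}(\Phi_\varphi) = \sum_{u,v \in \LL^*} (\Phi_\varphi \mid u \qsh v)\, u \otimes v = \sum_{u,v \in \LL^*} \varphi(u \qsh v)\, u \otimes v,
\]
whereas directly from the definition
\[
    \Phi_\varphi \otimes \Phi_\varphi = \sum_{u,v \in \LL^*} \varphi(u)\varphi(v)\, u \otimes v.
\]
Comparing coefficients of $u \otimes v$ in $\ALL^{\hat{\otimes} 2}$ then shows that $\widehat{\Delta}_{\qsh}(\Phi_\varphi) = \Phi_\varphi \otimes \Phi_\varphi$ holds if and only if $\varphi(u \qsh v) = \varphi(u)\varphi(v)$ for all words $u,v$. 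This is the computational heart of the argument, and it is routine once Lemma \ref{lem:coproductonp} is in hand.

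The only delicate point, and the step I expect to be the main obstacle, is reconciling the unit normalization with the invertibility required for membership in $\G(\ALL)$, since the coefficient comparison alone only gives (taking $u=v=1$) the weaker relation $\varphi(1)=\varphi(1)^2$. For the direction (b) $\Rightarrow$ (a), I would apply the counit $\epsilon = (\,\cdot \mid 1)$ to one tensor factor of $\widehat{\Delta}_{\qsh}(\Phi_\varphi) = \Phi_\varphi \otimes \Phi_\varphi$; the counit axiom gives $\Phi_\varphi = \varphi(1)\,\Phi_\varphi$, and since $\Phi_\varphi \in \ALL^{\times}$ is nonzero this forces $\varphi(1)=1$, upgrading the bare relation and, with the multiplicativity above, making $\varphi$ a unital algebra homomorphism. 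For the direction (a) $\Rightarrow$ (b), the homomorphism property supplies both $\varphi(1)=1$ and the coefficient identity, hence $\widehat{\Delta}_{\qsh}(\Phi_\varphi) = \Phi_\varphi \otimes \Phi_\varphi$; moreover $(\Phi_\varphi \mid 1) = \varphi(1) = 1$ is invertible in $\A$, so the series $\Phi_\varphi$ has invertible constant term and is therefore invertible in the completed algebra $\ALL$, placing it in $\G(\ALL)$. Thus the two conditions are equivalent.
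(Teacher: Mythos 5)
Your proof is correct and follows essentially the same route as the paper: apply Lemma \ref{lem:coproductonp} to $\Phi_\varphi$, compute $\Phi_\varphi\otimes\Phi_\varphi$ directly, and compare coefficients of $u\otimes v$. Your additional care about the unit normalization and the invertibility requirement in $\G(\ALL)$ is a point the paper's proof passes over silently, and your resolution of it is sound.
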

\begin{proof} First notice that Lemma \ref{lem:coproductonp} gives
\begin{align*}
    \widehat{\Delta}_{\qsh}( \Phi_\varphi ) = \sum_{w,v \in \LL^*} \underbrace{(\Phi_\varphi | w \qsh v)}_{ = \,\varphi(w \qsh v)} w \otimes v.
\end{align*}
By direct calculation we have
\begin{align*}
\Phi_\varphi \otimes \Phi_\varphi &= \left( \sum_{w \in \LL^*} \varphi(w) w \right) \otimes \left( \sum_{v \in \LL^*} \varphi(v) v \right) = \sum_{w,v \in \LL^*} \varphi(w)\varphi(v)  \, w \otimes v.
\end{align*}
Therefore, $\widehat{\Delta}_{\qsh}( \Phi_\varphi ) = \Phi_\varphi \otimes \Phi_\varphi$ if and only if $\varphi(w \qsh v) = \varphi(w) \varphi(v)$ for all $w,v \in \LL^*$.
\end{proof}

    \section{Functoriality results}

In this appendix, we revisit the functors $(-)^\ast$ and $(-)_\ast$ defined in \cite{Rac02}. This overview aims to clarify key aspects and highlight some important properties of these functors. This will provide a useful foundation for the definition of their respective dual counterparts $(-)^\sharp$ and $(-)_\sharp$ that we introduce in this paper. \newline
Throughout this appendix we will denote by $\mathsf{FinAb}$ the category of finite abelian groups and $\A{\text-}\mathsf{Alg}$ the category of $\A$-algebras. \newline
Proposition-Definition \ref{star_functors} and Lemmas \ref{star_Hopf} and \ref{ast_comm} below are mentioned in \cite[§2.5.3]{Rac02} without proof.

\begin{propdef}[{\cite[§2.5.3]{Rac02}}] \ 
    \begin{enumerate}[label=(\alph*), leftmargin=*]
        \item \label{upper_star} Define the mapping $(-)^\ast : \mathsf{FinAb} \to \A{\text-}\mathsf{Alg}$ as follows:
        \begin{enumerate}[label=(\roman*), leftmargin=*]
            \item To each group $G$, associate the algebra $\A\langle\langle X_G\rangle\rangle$;
            \item To each group homomorphism $\phi : G_1 \to G_2$, associate the algebra homomorphism $\phi^\ast : \A\langle\langle X_{G_2}\rangle\rangle \to \A\langle\langle X_{G_1}\rangle\rangle$ given by
            \[
                x_0 \mapsto x_0; \qquad x_h \mapsto \sum_{g\in\phi^{-1}(\{h\})} x_g \quad (h \in G_2). 
            \]  
        \end{enumerate}
        Then the mapping $(-)^\ast$ is a contravariant functor.
        \item \label{lower_star} Define the mapping $(-)_\ast : \mathsf{FinAb} \to \A{\text-}\mathsf{Alg}$ as follows:
        \begin{enumerate}[label=(\roman*), leftmargin=*]
            \item To each group $G$, associate the algebra $\A\langle\langle X_G\rangle\rangle$;
            \item To each group homomorphism $\phi : G_1 \to G_2$, associate the algebra homomorphism $\phi_\ast : \A\langle\langle X_{G_1}\rangle\rangle \to \A\langle\langle X_{G_2}\rangle\rangle$ given by
            \[
                x_0 \mapsto d \, x_0 \quad (d=|\ker(\phi)|); \qquad x_g \mapsto x_{\phi(g)} \quad (g \in G_1). 
            \]  
        \end{enumerate}
        Then the mapping $(-)_\ast$ is a covariant functor.
    \end{enumerate}
    \label{star_functors}
\end{propdef}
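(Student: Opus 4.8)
The plan is to verify, for each of the two assignments, the three defining properties of a functor: that every group homomorphism is sent to a well-defined $\A$-algebra homomorphism between the associated completed free algebras, that identity homomorphisms are preserved, and that composites are preserved — contravariantly for $(-)^\ast$ and covariantly for $(-)_\ast$. For well-definedness I would invoke the universal property of the free algebra: each $\A\langle\langle X_G\rangle\rangle$ is the completion of the free associative $\A$-algebra on the alphabet $X_G$, so prescribing the images of the letters determines a unique $\A$-algebra homomorphism; since in both cases these prescribed images are homogeneous of degree one (a finite sum of letters for $\phi^\ast$, and $d\,x_0$ or $x_{\phi(g)}$ for $\phi_\ast$), the induced map is graded and hence extends uniquely and continuously to the completion. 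The identity axiom is immediate: for $\phi=\mathrm{id}_G$ one has $\phi^{-1}(\{h\})=\{h\}$, so $(\mathrm{id}_G)^\ast$ fixes every generator; and $|\ker(\mathrm{id}_G)|=1$ together with $\mathrm{id}_G(g)=g$ shows $(\mathrm{id}_G)_\ast$ fixes every generator.

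The substance of the argument is compatibility with composition, which I would check on generators, since every map involved is an algebra homomorphism. Fix $\phi:G_1\to G_2$ and $\psi:G_2\to G_3$. For the contravariant functor I must show $(\psi\circ\phi)^\ast=\phi^\ast\circ\psi^\ast$. Both sides fix $x_0$, and for a letter $x_k$ with $k\in G_3$ the left-hand side equals $\sum_{g\in(\psi\circ\phi)^{-1}(k)}x_g$ while the right-hand side equals $\sum_{h\in\psi^{-1}(k)}\sum_{g\in\phi^{-1}(h)}x_g$. These coincide by the disjoint-fibre decomposition $(\psi\circ\phi)^{-1}(k)=\bigsqcup_{h\in\psi^{-1}(k)}\phi^{-1}(h)$, which holds for arbitrary homomorphisms; hence $(-)^\ast$ is a contravariant functor on all of $\mathsf{FinAb}$.

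For the covariant functor I must show $(\psi\circ\phi)_\ast=\psi_\ast\circ\phi_\ast$. On a letter $x_g$ with $g\in G_1$ both sides equal $x_{\psi(\phi(g))}$, so the only genuine comparison is on $x_0$, where the left-hand side is $|\ker(\psi\circ\phi)|\,x_0$ whereas $\psi_\ast(\phi_\ast(x_0))=|\ker\psi|\cdot|\ker\phi|\,x_0$. Thus functoriality of $(-)_\ast$ reduces precisely to the multiplicativity of kernel orders $|\ker(\psi\circ\phi)|=|\ker\phi|\cdot|\ker\psi|$, and I expect this to be the main obstacle. I would establish it from the short exact sequence of kernels $1\to\ker\phi\to\ker(\psi\circ\phi)\xrightarrow{\phi}\ker\psi\to 1$, whose left map is the inclusion (note $\ker\phi\subseteq\phi^{-1}(\ker\psi)=\ker(\psi\circ\phi)$) and whose right map is the restriction of $\phi$; counting orders along this sequence yields exactly the required scalar identity on $x_0$, and combined with the generator computation above this gives $(\psi\circ\phi)_\ast=\psi_\ast\circ\phi_\ast$, completing the verification that $(-)_\ast$ is a covariant functor.
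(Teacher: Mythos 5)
Your verification of well-definedness, of the identity axiom, and of the contravariant functoriality of $(-)^\ast$ is correct and is essentially the paper's argument: the fibre decomposition $(\psi\circ\phi)^{-1}(\{k\})=\bigsqcup_{h\in\psi^{-1}(\{k\})}\phi^{-1}(\{h\})$ is exactly what the paper packages as contravariance of the inverse-image power-set functor.

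The gap is in the covariant half. The sequence
\[
1\longrightarrow\ker\phi\longrightarrow\ker(\psi\circ\phi)\xrightarrow{\ \phi\ }\ker\psi\longrightarrow 1
\]
is not exact at $\ker\psi$ for general $\phi,\psi$: the restriction of $\phi$ to $\ker(\psi\circ\phi)=\phi^{-1}(\ker\psi)$ has image $\ker\psi\cap\mathrm{im}(\phi)$, not all of $\ker\psi$. Counting orders therefore gives $|\ker(\psi\circ\phi)|=|\ker\phi|\cdot|\ker\psi\cap\mathrm{im}(\phi)|$, which equals $|\ker\phi|\cdot|\ker\psi|$ if and only if $\ker\psi\subseteq\mathrm{im}(\phi)$ (for instance when $\phi$ is surjective). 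The multiplicativity you rely on is genuinely false for arbitrary arrows of $\mathsf{FinAb}$: take $\phi:\{1\}\hookrightarrow\Z/2\Z$ and $\psi:\Z/2\Z\to\{1\}$; then $\ker(\psi\circ\phi)$ is trivial while $|\ker\phi|\cdot|\ker\psi|=2$, so $(\psi\circ\phi)_\ast(x_0)=x_0$ but $\psi_\ast(\phi_\ast(x_0))=2\,x_0$. So the step you flagged as the main obstacle is indeed the obstacle, and the proposed exact sequence does not clear it.

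Your computation is not wasted, though: it isolates precisely the condition $\ker\psi\subseteq\mathrm{im}(\phi)$ under which composition is preserved, and on the subcategory of finite abelian groups with surjective homomorphisms your sequence is exact and your argument goes through; this covers every use of $(-)_\ast$ in the paper, which applies it only to the surjections $p^d:G\twoheadrightarrow G^d$. Note also that the paper's own one-line proof (covariance of the direct-image power-set functor) only accounts for the letters $x_g\mapsto x_{\phi(g)}$ and is silent about the coefficient $d$ on $x_0$, so it does not settle this point either; a complete write-up must make the restriction on morphisms (or a different normalization of $\phi_\ast(x_0)$) explicit.
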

\begin{proof} \ 
    \begin{enumerate}[label=(\alph*), leftmargin=*]
        \item This can be established through direct verification, utilizing the contravariance of the power set functor $P^\ast$. Recall that $P^\ast$ maps each set $E$ to its power set $\mathcal{P}(E)$ and each map $f : E \to F$ to the map $P^\ast(f) : \mathcal{P}(F) \to \mathcal{P}(E)$ defined by $P^\ast(f)(U) = f^{-1}(U)$ for $U \subseteq F$.
        \item This can be established through direct verification, utilizing the covariance of the power set functor $P_\ast$. Recall that $P_\ast$ maps each set $E$ to its power set $\mathcal{P}(E)$ and each map $f : E \to F$ to the map $P_\ast(f) : \mathcal{P}(E) \to \mathcal{P}(F)$ defined by $P_\ast(f)(S) = f(S)$ for $S \subseteq E$.
    \end{enumerate}
\end{proof}

\begin{lemma}
    \label{star_Hopf}
    Let $\phi : G_1 \to G_2$ be an arrow in $\mathsf{FinAb}$. Then, the algebra homomorphism $\phi^\ast : \A\langle\langle X_{G_2}\rangle\rangle \to \A\langle\langle X_{G_1}\rangle\rangle$ (resp. $\phi_\ast : \A\langle\langle X_{G_1}\rangle\rangle \to \A\langle\langle X_{G_2}\rangle\rangle$):
    \begin{enumerate}[label=(\alph*), leftmargin=*]
        \item \label{star_Hopf_sh}is a Hopf algebra homomorphism $\phi^\ast : (\A\langle\langle X_{G_2}\rangle\rangle, \widehat{\Delta}_{G_2,\sh}) \to (\A\langle\langle X_{G_1}\rangle\rangle, \widehat{\Delta}_{G_1,\sh})$ (resp. $\phi_\ast : (\A\langle\langle X_{G_1}\rangle\rangle, \widehat{\Delta}_{G_1,\sh}) \to (\A\langle\langle X_{G_2}\rangle\rangle, \widehat{\Delta}_{G_2,\sh})$);
        \item \label{star_Hopf_st}restricts to an algebra homomorphism $\phi^\ast : \A\langle\langle Y_{G_2}\rangle\rangle \to \A\langle\langle Y_{G_1}\rangle\rangle$ (resp. $\phi_\ast : \A\langle\langle Y_{G_1}\rangle\rangle \to \A\langle\langle Y_{G_2}\rangle\rangle$), which is a Hopf algebra homomorphism $(\A\langle\langle Y_{G_2}\rangle\rangle, \widehat{\Delta}_{G_2,\st}) \to (\A\langle\langle Y_{G_1}\rangle\rangle, \widehat{\Delta}_{G_1,\st})$ (resp. $(\A\langle\langle Y_{G_1}\rangle\rangle, \widehat{\Delta}_{G_1,\st}) \to (\A\langle\langle Y_{G_2}\rangle\rangle, \widehat{\Delta}_{G_2,\st})$).
    \end{enumerate}
\end{lemma}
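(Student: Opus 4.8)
The plan is to reduce the whole statement to a verification on algebra generators. Since $\phi^\ast$ and $\phi_\ast$ are by construction $\A$-algebra homomorphisms, and the coproducts $\widehat{\Delta}_{G,\sh}$ and $\widehat{\Delta}_{G,\st}$ are algebra homomorphisms as well, for $f \in \{\phi^\ast, \phi_\ast\}$ both $\widehat{\Delta}\circ f$ and $(f\otimes f)\circ\widehat{\Delta}$ are algebra homomorphisms out of the relevant complete free algebra. Hence the coalgebra compatibility (and therefore the bialgebra, and then Hopf, compatibility, the antipode being automatic for these connected graded complete Hopf algebras) need only be checked on a topological generating set: the letters $x_0, x_h$ ($h \in G_2$) resp. $x_0, x_g$ ($g \in G_1$) for the shuffle coproduct, and the letters $y_{n,h}$ resp. $y_{n,g}$ for the harmonic coproduct.

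For part \ref{star_Hopf_sh} the generator check is immediate. For $\phi^\ast$ one has $\widehat{\Delta}_{G_1,\sh}(\phi^\ast(x_h)) = \sum_{g\in\phi^{-1}(\{h\})}(x_g\otimes 1 + 1\otimes x_g) = (\phi^\ast\otimes\phi^\ast)\widehat{\Delta}_{G_2,\sh}(x_h)$, and the identity on $x_0$ is trivial. For $\phi_\ast$ the only thing to watch is the scalar $d = |\ker\phi|$: on $x_0$ both sides equal $d(x_0\otimes 1 + 1\otimes x_0)$, and on $x_g$ both equal $x_{\phi(g)}\otimes 1 + 1\otimes x_{\phi(g)}$. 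This proves \ref{star_Hopf_sh} for every $\phi$.

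For part \ref{star_Hopf_st} I would first record that both maps restrict to the $Y$-subalgebras: from $y_{n,g} = x_0^{n-1}x_g$ one gets $\phi^\ast(y_{n,h}) = \sum_{g\in\phi^{-1}(\{h\})} y_{n,g}$ and $\phi_\ast(y_{n,g}) = d^{n-1}\,y_{n,\phi(g)}$, which lie in $\A\langle\langle Y_{G_1}\rangle\rangle$ and $\A\langle\langle Y_{G_2}\rangle\rangle$ respectively. It then remains to match the convolution term of the harmonic coproduct on a generator. For $\phi^\ast$ one expands $(\phi^\ast\otimes\phi^\ast)\widehat{\Delta}_{G_2,\st}(y_{n,h})$ and reindexes the resulting double sum as a sum over all pairs $(g',g'')\in G_1\times G_1$ with $\phi(g'g'') = h$; the same family of pairs is produced by $\widehat{\Delta}_{G_1,\st}(\phi^\ast(y_{n,h})) = \sum_{g\in\phi^{-1}(\{h\})}\widehat{\Delta}_{G_1,\st}(y_{n,g})$, so the two coincide for every $\phi$. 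For $\phi_\ast$ one instead groups the inner sum over $g_1 \in G_1$ in $(\phi_\ast\otimes\phi_\ast)\widehat{\Delta}_{G_1,\st}(y_{n,g})$ according to the value $h' = \phi(g_1)$.

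The main obstacle is exactly this last bookkeeping for the covariant functor. Each fiber of $\phi$ has cardinality $d$, and together with the factors $d^{k-1}$ and $d^{n-k-1}$ that $\phi_\ast$ contributes on the two tensor legs these reassemble the prefactor $d^{n-1}$ carried by $\widehat{\Delta}_{G_2,\st}(\phi_\ast(y_{n,g}))$; one must then check that the sum thus produced over $\mathrm{im}(\phi)$ matches the full harmonic sum over $G_2$. This is the structural point where surjectivity of $\phi$ enters, which is exactly the situation of the application to $p^d : G \twoheadrightarrow G^d$, for which $\mathrm{im}(\phi) = G^d$. All remaining verifications are routine manipulations on generators.
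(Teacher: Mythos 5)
Your proposal follows essentially the same route as the paper, whose proof of this lemma is a one-line ``direct verification on the generators'' for part \ref{star_Hopf_sh} and, for part \ref{star_Hopf_st}, the observation that $\phi^\ast(x_0^{n-1}x_h)=\sum_{g\in\phi^{-1}(\{h\})}x_0^{n-1}x_g$ and $\phi_\ast(x_0^{n-1}x_g)=d^{\,n-1}x_0^{n-1}x_{\phi(g)}$ followed by another direct check; your reduction to generators and your reindexing of the convolution term are exactly the details being elided there. The one substantive point you raise --- that matching $(\phi_\ast\otimes\phi_\ast)\widehat{\Delta}_{G_1,\st}(y_{n,g})$ against $\widehat{\Delta}_{G_2,\st}(\phi_\ast(y_{n,g}))$ produces a sum over $\mathrm{im}(\phi)$ with fiber multiplicity $d$ on one side and the full sum over $G_2$ on the other --- is a genuine catch: for a non-surjective $\phi$ (e.g.\ the inclusion of the trivial group into $\Z/2$, where $\widehat{\Delta}_{G_2,\st}(\phi_\ast(y_{2,1}))$ contains $y_{1,s}\otimes y_{1,s}$ but $(\phi_\ast\otimes\phi_\ast)\widehat{\Delta}_{G_1,\st}(y_{2,1})$ does not) the $\phi_\ast$ half of part \ref{star_Hopf_st} actually fails, so the lemma as stated for an arbitrary arrow of $\mathsf{FinAb}$ needs surjectivity of $\phi$ in that clause. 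Since the paper only ever applies $\phi_\ast$ to the surjection $p^d:G\twoheadrightarrow G^d$ (the inclusion $i_d$ is handled via the contravariant $\phi^\ast$, for which no surjectivity is needed), nothing downstream is affected, but you are right to flag this as the only non-routine step.
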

\begin{proof} \ 
    \begin{enumerate}[label=(\alph*), leftmargin=*]
        \item The fact that $\widehat{\Delta}_{G_1,\sh} \circ \phi^\ast = (\phi^\ast)^{\otimes 2} \circ \widehat{\Delta}_{G_2,\sh}$ (resp. $\widehat{\Delta}_{G_2,\sh} \circ \phi_\ast = (\phi_\ast)^{\otimes 2} \circ \widehat{\Delta}_{G_1,\sh}$) can be established through direct verification on the generators.
        \item This follows from the fact that $\phi^\ast(x_0^{n-1} x_h) = \sum_{g \in \phi^{-1}(\{h\})} x_0^{n-1} x_g$ for any $(n, h) \in \Z_{>0} \times G_2$ (resp. $\phi_\ast(x_0^{n-1} x_g) = d^{n-1} x_0^{n-1} x_{\phi(g)}$ for any $(n, g) \in \Z_{>0} \times G_1$). From this, one can establish through a direct verification that $\widehat{\Delta}_{G_1,\st} \circ \phi^\ast = (\phi^\ast)^{\otimes 2} \circ \widehat{\Delta}_{G_2,\st}$ (resp. $\widehat{\Delta}_{G_2,\st} \circ \phi_\ast = (\phi_\ast)^{\otimes 2} \circ \widehat{\Delta}_{G_1,\st}$)  
    \end{enumerate}
\end{proof}

\begin{lemma}
    \label{ast_comm}
    Let $\phi : G_1 \to G_2$ be an arrow in $\mathsf{FinAb}$.
    \begin{enumerate}[label=(\alph*), leftmargin=*]
        \item We have an equality of $\A$-module homomorphisms $\A\langle\langle X_{G_2}\rangle\rangle \to \A\langle\langle Y_{G_1}\rangle\rangle$
        \[
            \pi_{Y_{G_1}} \circ \phi^\ast = \phi^\ast \circ \pi_{Y_{G_2}}
        \]
        and an equality of $\A$-module homomorphisms $\A\langle\langle X_{G_1}\rangle\rangle \to \A\langle\langle Y_{G_2}\rangle\rangle$
        \[
            \pi_{Y_{G_2}} \circ \phi_\ast = \phi_\ast \circ \pi_{Y_{G_1}}.
        \] 
        \item \label{qphiast_comm} We have an equality of $\A$-module homomorphisms $\A\langle\langle X_{G_2}\rangle\rangle \to \A\langle\langle X_{G_1}\rangle\rangle$
        \[
            \qq_{G_1} \circ \phi^\ast = \phi^\ast \circ \qq_{G_2}
        \]
        and an equality of $\A$-module homomorphisms $\A\langle\langle X_{G_1}\rangle\rangle \to \A\langle\langle X_{G_2}\rangle\rangle$
        \[
            \qq_{G_2} \circ \phi_\ast = \phi_\ast \circ \qq_{G_1}.
        \] 
    \end{enumerate}
\end{lemma}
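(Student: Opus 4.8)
The plan is to prove all four identities by reducing to a verification on words (monic monomials). Every map involved — $\qq_{G_i}$, $\pi_{Y_{G_i}}$, $\phi^\ast$ and $\phi_\ast$ — is $\A$-linear and is the completion of a graded map on the polynomial algebra, so it is determined by its values on $X_{G_i}^\ast$; hence each claimed equality of $\A$-module homomorphisms follows from the corresponding word-level identity by $\A$-linearity and continuity.

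For part (a) I would first record the structural observation that both $\phi^\ast$ and $\phi_\ast$ are compatible with the last letter of a word: the generator $x_0$ is sent to a scalar multiple of $x_0$ ($x_0$ under $\phi^\ast$, $d\,x_0$ under $\phi_\ast$), while each $x_h$ (resp. $x_g$) is sent to an $\A$-linear combination of generators indexed by group elements of $G_1$ (resp. by $\phi(g)\in G_2$), never involving $x_0$. Consequently, applying $\phi^\ast$ or $\phi_\ast$ to a word ending in $x_0$ yields a combination of words again ending in $x_0$, whereas a word ending in some group-indexed letter is sent to a combination of words ending in group-indexed letters. This is precisely the statement that these functors respect the decomposition $\A\langle\langle X_G\rangle\rangle = \A\langle\langle Y_G\rangle\rangle \oplus \A\langle\langle X_G\rangle\rangle x_0$, the $Y$-summand being covered by Lemma \ref{star_Hopf}\ref{star_Hopf_st}. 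The verification on a word $w$ then splits into two cases: if $w$ lies in the $Y$-summand, both $\pi_Y\circ\phi^\ast$ and $\phi^\ast\circ\pi_Y$ return $\phi^\ast(w)$; if $w$ lies in the $\A\langle\langle X\rangle\rangle x_0$-summand, both return $0$. The identical dichotomy handles $\phi_\ast$.

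For part (b) I would compute directly from the explicit formula for $\q_G$, noting first that $\q_G$ never alters the pattern of $x_0$'s in a word (only the group labels), so the scalar $d^N$ produced by $\phi_\ast$ (with $N$ the total $x_0$-exponent) is unaffected by $\q$. The $\phi_\ast$ identity is then immediate: on a word $w = x_0^{n_1-1}x_{g_1}\cdots x_0^{n_r-1}x_{g_r}x_0^{n_{r+1}-1}$, both $\qq_{G_2}\circ\phi_\ast$ and $\phi_\ast\circ\qq_{G_1}$ produce $d^N$ times the word with group labels $\phi(g_i)\phi(g_{i-1})^{-1} = \phi(g_i g_{i-1}^{-1})$, the equality being exactly the homomorphism property of $\phi$.

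The $\phi^\ast$ case of (b) is the subtle one, and I expect it to be the main obstacle: expanding $\phi^\ast$ turns each generator into a sum over preimages, so on a word $w = x_0^{n_1-1}x_{h_1}\cdots x_0^{n_r-1}x_{h_r}x_0^{n_{r+1}-1}$ the composite $\qq_{G_1}\circ\phi^\ast$ is a sum, over tuples $(g_1,\dots,g_r)$ with $\phi(g_i)=h_i$, of the word with labels $g_i g_{i-1}^{-1}$, while $\phi^\ast\circ\qq_{G_2}$ is a sum, over tuples $(g_1',\dots,g_r')$ with $\phi(g_i') = h_i h_{i-1}^{-1}$, of the word with labels $g_i'$. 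The heart of the argument is to exhibit the bijection $(g_1,\dots,g_r)\mapsto(g_1 g_0^{-1},\dots,g_r g_{r-1}^{-1})$ (with $g_0 = 1$) between these two index sets; it is well-defined because $\phi(g_i g_{i-1}^{-1}) = h_i h_{i-1}^{-1}$, using that $\phi$ is a homomorphism and $G_2$ is abelian, and it matches the two sums term by term. Once this bijection is in place all four identities follow, the only genuine bookkeeping being the preimage-sum matching for $\phi^\ast$.
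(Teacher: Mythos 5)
Your proposal is correct and follows essentially the same route as the paper: part (a) via the decomposition $\A\langle\langle X_G\rangle\rangle = \A\langle\langle Y_G\rangle\rangle \oplus \A\langle\langle X_G\rangle\rangle x_0$ together with the fact that $\phi^\ast$ and $\phi_\ast$ preserve both summands, and part (b) via a word-by-word computation in which the $\phi^\ast$ case reduces to exactly the change of variables $g_i' = g_i g_{i-1}^{-1}$ (the paper's substitution $g_1'=g_1,\ g_2'=g_1^{-1}g_2,\dots$) and the $\phi_\ast$ case to the homomorphism property of $\phi$. Your explicit remarks that the scalar $d^N$ commutes with $\qq_{G}$ and that the $x_0$-summand is also preserved are slightly more careful than the paper's terse write-up, but the argument is the same.
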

\begin{proof} \ 
    \begin{enumerate}[label=(\alph*), leftmargin=*]
        \item This follows from the direct sum decomposition $\A\langle\langle X_{G_i}\rangle\rangle = \A\langle\langle Y_{G_i}\rangle\rangle \oplus \A\langle\langle X_{G_i}\rangle\rangle x_0$ ($i \in \{1, 2\}$) and the fact that $\phi^\ast$ (resp. $\phi_\ast$) restricts to $\A\langle\langle Y_{G_2}\rangle\rangle \to \A\langle\langle Y_{G_1}\rangle\rangle$ (resp. $\A\langle\langle Y_{G_1}\rangle\rangle \to \A\langle\langle Y_{G_2}\rangle\rangle$), thanks to Lemma \ref{star_Hopf} \ref{star_Hopf_st}.
        \item It is enough to prove this equality on a word $x_0^{n_1-1}x_{h_1} \cdots x_0^{n_r-1}x_{h_r} x_0^{n_{r+1}-1}$ of $\A\langle\langle X_{G_2}\rangle\rangle$ (resp. $x_0^{n_1-1}x_{g_1} \cdots x_0^{n_r-1}x_{g_r} x_0^{n_{r+1}-1}$ of $\A\langle\langle X_{G_1}\rangle\rangle$). We have
        \begin{align*}
            &\mbox{\small$\displaystyle\qq_{G_1} \circ \phi^\ast(x_0^{n_1-1}x_{h_1} \cdots x_0^{n_r-1}x_{h_r} x_0^{n_{r+1}-1}) = \sum_{\substack{\phi(g_i)=h_i \\ 1\leq i\leq r}} \qq_{G_1}(x_0^{n_1-1}x_{g_1} \cdots x_0^{n_r-1}x_{g_r} x_0^{n_{r+1}-1})$} \\
            & = \sum_{\substack{\phi(g_i)=h_i \\ 1\leq i\leq r}} x_0^{n_1-1}x_{g_1} x_0^{n_2-1}x_{g_1^{-1}g_2} \cdots x_0^{n_r-1}x_{g_{r-1}^{-1}g_r} x_0^{n_{r+1}-1} \\
            & = \sum_{\phi(g_1^\prime)=h_1} \sum_{\substack{\phi(g_i^\prime)=h_{i-1}^{-1} h_i \\ 2\leq i\leq r}} x_0^{n_1-1}x_{g_1^\prime} x_0^{n_2-1}x_{g_2^\prime} \cdots x_0^{n_r-1}x_{g_r^\prime} x_0^{n_{r+1}-1} \\
            & = \phi^\ast\left(x_0^{n_1-1}x_{h_1} x_0^{n_2-1}x_{h_1^{-1}h_2} \cdots x_0^{n_r-1}x_{h_{r-1}^{-1} h_r} x_0^{n_{r+1}-1}\right) \\
            & = \phi^\ast \circ \qq_{G_2}(x_0^{n_1-1}x_{h_1} \cdots x_0^{n_r-1}x_{h_r} x_0^{n_{r+1}-1}), 
        \end{align*}
        where the third equality follows by applying the change of variables
        \[
            g_1^\prime = g_1, \quad g_2^\prime = g_1^{-1} g_2, \quad \dots, \quad g_r^\prime = g_{r-1}^{-1} g_r.
        \]
       For the second identity we get
        \begin{align*}
            &\mbox{\small$\displaystyle\qq_{G_2} \circ \phi_\ast(x_0^{n_1-1}x_{g_1} \cdots x_0^{n_r-1}x_{g_r} x_0^{n_{r+1}-1}) = \qq_{G_1}(x_0^{n_1-1}x_{\phi(g_1)} \cdots x_0^{n_r-1}x_{\phi(g_r)} x_0^{n_{r+1}-1})$} \\
            & = x_0^{n_1-1}x_{\phi(g_1)} x_0^{n_2-1}x_{\phi(g_1)^{-1}\phi(g_2)} \cdots x_0^{n_r-1}x_{\phi(g_{r-1})^{-1}\phi(g_r)} x_0^{n_{r+1}-1} \\
            & = x_0^{n_1-1}x_{\phi(g_1)} x_0^{n_2-1}x_{\phi(g_1^{-1}g_2)} \cdots x_0^{n_r-1}x_{\phi(g_{r-1}^{-1}g_r)} x_0^{n_{r+1}-1} \\
            & = \phi_\ast\left(x_0^{n_1-1}x_{g_1} x_0^{n_2-1}x_{g_1^{-1}g_2} \cdots x_0^{n_r-1}x_{g_{r-1}^{-1} g_r} x_0^{n_{r+1}-1}\right) \\
            & = \phi_\ast \circ \qq_{G_1}(x_0^{n_1-1}x_{g_1} \cdots x_0^{n_r-1}x_{g_r} x_0^{n_{r+1}-1}), 
        \end{align*}
        where the third equality follows from the fact that $\phi$ is a group homomorphism.
    \end{enumerate}
\end{proof}

We now define the dual counterparts of the functors $(-)^\ast$ and $(-)_\ast$. 
\begin{propdef} \ 
    \begin{enumerate}[label=(\alph*), leftmargin=*]
        \item \label{upper_sharp} Define a mapping $(-)^\sharp : \mathsf{FinAb} \to \A{\text-}\mathsf{Alg}$ as follows:
        \begin{enumerate}[label=(\roman*), leftmargin=*]
            \item To each group $G$, associate the algebra $\h_G$;
            \item To each group homomorphism $\phi : G_1 \to G_2$, associate the algebra homorphism $\phi^\sharp : \h_{G_1} \to \h_{G_2}$ given by
            \[
                x_0 \mapsto x_0; \qquad x_g \mapsto x_{\phi(g)} \quad (g \in G_1). 
            \]  
        \end{enumerate}
        Then the mapping $(-)^\sharp$ is a covariant functor.
        \item \label{lower_sharp} Define a mapping $(-)_\sharp : \mathsf{FinAb} \to \A{\text-}\mathsf{Alg}$ as follows:
        \begin{enumerate}[label=(\roman*), leftmargin=*]
            \item To each group $G$, associate the algebra $\h_G$;
            \item To each group homomorphism $\phi : G_1 \to G_2$, associate the algebra homorphism $\phi_\sharp : \h_{G_2} \to \h_{G_1}$ given by
            \[
                x_0 \mapsto d \, x_0 \quad (d=|\ker(\phi)|); \qquad x_h \mapsto \sum_{g\in\phi^{-1}(\{h\})} x_g \quad (h \in G_2). 
            \]  
        \end{enumerate}
        Then the mapping $(-)_\sharp$ is a contravariant functor.
    \end{enumerate}
    \label{sharp_functors}
\end{propdef}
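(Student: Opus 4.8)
The plan is to establish both functoriality statements by a direct verification on generators, exactly parallel to the proof of Proposition-Definition \ref{star_functors}, using that $\h_G = \Q\langle X_G\rangle$ is the free unital associative algebra on the alphabet $X_G$. Well-definedness is immediate: each of $\phi^\sharp$ and $\phi_\sharp$ is prescribed on the generators of a free algebra, so by the universal property it extends uniquely to an algebra homomorphism, giving a genuine morphism in $\A{\text-}\mathsf{Alg}$. Since all maps in sight are algebra homomorphisms, it then suffices to check the two functor axioms---preservation of identities and of composition---on the generators $x_0$ and $x_g$ alone.

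For identities, $\phi = \mathrm{id}_G$ has $|\ker(\mathrm{id}_G)| = 1$ and $\mathrm{id}_G^{-1}(\{h\}) = \{h\}$, so both $\mathrm{id}_G^\sharp$ and $\mathrm{id}_{G\,\sharp}$ fix every generator and hence equal $\mathrm{id}_{\h_G}$. For composition, fix $\phi : G_1 \to G_2$ and $\psi : G_2 \to G_3$. To prove covariance of $(-)^\sharp$ I would check $(\psi \circ \phi)^\sharp = \psi^\sharp \circ \phi^\sharp$ on generators: both sides send $x_0$ to $x_0$ and $x_g$ to $x_{\psi(\phi(g))}$, which is immediate from $(\psi\circ\phi)(g) = \psi(\phi(g))$. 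To prove contravariance of $(-)_\sharp$ I would check $(\psi \circ \phi)_\sharp = \phi_\sharp \circ \psi_\sharp$ on a generator $x_h$ (with $h \in G_3$): expanding $\psi_\sharp(x_h) = \sum_{h' \in \psi^{-1}(\{h\})} x_{h'}$ and then applying $\phi_\sharp$ yields $\sum_{g \in (\psi\circ\phi)^{-1}(\{h\})} x_g$, using the contravariant functoriality of preimages $(\psi\circ\phi)^{-1} = \phi^{-1}\circ\psi^{-1}$---the same role played by the power-set functor in Proposition-Definition \ref{star_functors}.

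I expect the only genuinely delicate point---and thus the main obstacle---to be the distinguished letter $x_0$ under composition for the scaling functor $(-)_\sharp$. There $(\psi\circ\phi)_\sharp(x_0) = |\ker(\psi\circ\phi)|\,x_0$, while $\phi_\sharp(\psi_\sharp(x_0)) = |\ker(\psi)|\,|\ker(\phi)|\,x_0$, so matching the coefficients is exactly the multiplicativity $|\ker(\psi\circ\phi)| = |\ker(\psi)|\,|\ker(\phi)|$. This is the precise analogue of the scaling appearing in the covariant functor $(-)_\ast$ of Proposition-Definition \ref{star_functors}, and I would dispose of it in the same generality used there; it holds in particular for the projections $p^d$ and the compositions $G \twoheadrightarrow G^{d_1} \twoheadrightarrow G^{d_1 d_2}$ to which $(-)_\sharp$ is applied in the distribution relations.

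As a conceptual cross-check I would also observe that $\phi^\sharp$ and $\phi_\sharp$ are the transposes of $\phi^\ast$ and $\phi_\ast$ for the pairing $(\,\cdot \mid \cdot\,)$ of Definition \ref{def:scalarproduct}: a short word computation shows that $(\phi^\ast(v) \mid w)$ selects the single word obtained from $w$ by $x_a \mapsto x_{\phi(a)}$, that is $\phi^\sharp(w)$, and likewise $\phi_\sharp = (\phi_\ast)^{t}$. Since this pairing is perfect, transposition is faithful and reverses the order of composition, so the functoriality of $(-)^\ast$ and $(-)_\ast$ granted by Proposition-Definition \ref{star_functors} transfers directly with the variance flipped as claimed, recovering the statement without re-examining the $x_0$-scaling by hand.
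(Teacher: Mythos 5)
Your overall strategy is the same as the paper's: the paper's proof of Proposition-Definition \ref{sharp_functors} simply invokes ``the same reasoning'' as Proposition-Definition \ref{star_functors}, namely direct verification on the generators $x_0$ and $x_g$ of the free algebra $\h_G$, with the composition law for the letters $x_h$ reducing to $(\psi\circ\phi)^{-1}=\phi^{-1}\circ\psi^{-1}$, i.e.\ the (contra)variance of the power-set functor. Your identification of the transposition relation $\phi^\sharp=(\phi^\ast)^t$, $\phi_\sharp=(\phi_\ast)^t$ under the pairing of Definition \ref{def:scalarproduct} is correct (it is exactly Lemma \ref{lem:astsharp}) and is a nice alternative derivation, though it merely transfers the burden to the functoriality of $(-)_\ast$.

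There is, however, a genuine problem with the step you yourself single out as delicate. The identity $|\ker(\psi\circ\phi)|=|\ker\psi|\,|\ker\phi|$ is \emph{false} for general composable arrows in $\mathsf{FinAb}$: one has $\ker(\psi\circ\phi)=\phi^{-1}(\ker\psi)$, hence
\[
  |\ker(\psi\circ\phi)| \;=\; |\ker\phi|\cdot|\ker\psi\cap\operatorname{im}\phi|,
\]
which equals $|\ker\phi|\,|\ker\psi|$ only when $\ker\psi\subseteq\operatorname{im}\phi$, e.g.\ when $\phi$ is surjective. For the composition $\{1\}\hookrightarrow\Z/2\Z\to\{1\}$ one gets $(\psi\circ\phi)_\sharp(x_0)=x_0$ but $\phi_\sharp\circ\psi_\sharp(x_0)=2x_0$, so the composition law for $(-)_\sharp$ (and likewise for $(-)_\ast$) fails on $x_0$. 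This means the statement as literally phrased --- a functor on all of $\mathsf{FinAb}$ --- cannot be proved; you cannot ``dispose of it in the same generality used there.'' The defect is inherited from the paper (and from Racinet), whose one-line proofs gloss over the same point; the honest fix is to restrict $(-)_\ast$ and $(-)_\sharp$ to the subcategory of surjective homomorphisms, or to note that every arrow actually used ($p^d$, and compositions of such projections) is surjective, as you correctly observe at the end of that paragraph. The covariant functor $(-)^\sharp$ and the contravariant $(-)^\ast$ involve no scaling of $x_0$ and are unproblematic; your verification for those parts, and for the letters $x_h$ under $(-)_\sharp$, is complete and correct.
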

\begin{proof}
    This can be proven by applying the same reasoning used in the proof of Proposition-Definition \ref{star_functors}.
\end{proof}

Let $\phi : G_1 \to G_2$ be an arrow in $\mathsf{FinAb}$. For $i \in \{1, 2\}$, consider the pairing
\[
    (-, -)_i : \A\langle\langle X_{G_i}\rangle\rangle \otimes \h_{G_i} \to \A,
\]
from Definition \ref{def:scalarproduct} for the case $\mathcal{L}=X_{G_i}$.

\begin{lemma}
    \label{lem:astsharp}
    Let $\phi : G_1 \to G_2$ be an arrow in $\mathsf{FinAb}$. We have
    \begin{enumerate}[label=(\alph*), leftmargin=*]
        \item $(\phi^\ast(S_2), P_1)_1 = (S_2, \phi^\sharp(P_1))_2$, for $S_2 \in \A\langle\langle X_{G_2}\rangle\rangle$ and $P_1 \in \h_{G_1}$.
        \item $(\phi_\ast(S_1), P_2)_2 = (S_1, \phi_\sharp(P_2))_1$, for $S_1 \in \A\langle\langle X_{G_1}\rangle\rangle$ and $P_2 \in \h_{G_2}$.
    \end{enumerate}
\end{lemma}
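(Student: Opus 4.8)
The plan is to read both identities as the assertion that $\phi^\ast$ (resp.\ $\phi_\ast$) is the transpose of $\phi^\sharp$ (resp.\ $\phi_\sharp$) for the pairing of Definition \ref{def:scalarproduct}, in which the words of $X_{G_i}^\ast$ act as dual bases: for words $w$ and $w'$ one has $(w, w')_i = \delta_{w,w'}$. Since the four maps are all continuous $\A$-module homomorphisms and the pairing is $\A$-linear and continuous in its series slot and $\Q$-linear in its polynomial slot, I would first reduce each identity to words. As $\h_{G_i}$ is spanned by words, I may take $P$ to be a single word; and writing $S=\sum_{w}(S\mid w)\,w$ and extracting a fixed coefficient is continuous, so it suffices to prove the word-level statements and then sum against the coefficients of $S$. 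Throughout I extend $\phi$ to letter indices by $\phi(0)=0$, so that $\phi^\sharp(x_a)=x_{\phi(a)}$ and $\phi^\ast(x_b)=\sum_{\phi(a)=b}x_a$ hold uniformly over $\{0\}\sqcup G_i$.

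For part (a) the key observation is that $\phi^\sharp$ sends a word $w'=x_{a_1}\cdots x_{a_m}$ to the \emph{single} word $x_{\phi(a_1)}\cdots x_{\phi(a_m)}$, whereas $\phi^\ast$ expands a word $w=x_{b_1}\cdots x_{b_n}$ as the sum of all $x_{c_1}\cdots x_{c_n}$ with $\phi(c_i)=b_i$. Hence the coefficient of $w'$ in $\phi^\ast(w)$ is nonzero (and then equal to $1$) exactly when $m=n$ and $\phi(a_i)=b_i$ for every $i$, i.e.\ exactly when $w=\phi^\sharp(w')$; that is, $(\phi^\ast(w),w')_1=\delta_{w,\phi^\sharp(w')}=(w,\phi^\sharp(w'))_2$. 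Summing this identity against $(S_2\mid w)$ over all $w$ then yields statement (a).

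Part (b) follows the same scheme, the only difference being the factor $d=|\ker(\phi)|$ attached to $x_0$ by both $\phi_\ast$ and $\phi_\sharp$. Letting $z(\cdot)$ count the occurrences of the letter $x_0$, one has on words $\phi_\ast(w')=d^{z(w')}\,\phi^\sharp(w')$ and $\phi_\sharp(w)=d^{z(w)}\sum_{\phi^\sharp(w')=w}w'$. Pairing each against $S_1$ and comparing, both sides are supported on the words $w'$ with $\phi^\sharp(w')=w$, and for such $w'$ the positions carrying $x_0$ in $w'$ and in $w$ coincide---because for $g\in G_1$ the index $\phi(g)$ lies in $G_2$ and is never the symbol $0$---so $z(w')=z(w)$ and the two powers of $d$ agree. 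This gives statement (b).

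The argument is almost entirely bookkeeping, and I expect no real obstacle: the two points needing a little care are the justification of the reduction to words for the completed series (which rests only on degreewise continuity of the homomorphisms and of coefficient extraction) and, in part (b), the verification that the $x_0$-counts---and hence the powers of $d$---match on the two sides, which is exactly the observation that a group homomorphism $\phi$ never carries a group element to the formal symbol $0$.
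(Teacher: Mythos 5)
Your proof is correct and takes essentially the same route as the paper's: reduce to single words by linearity, compute $\phi^\sharp$ and $\phi_\sharp$ explicitly on a word, and match coefficients (the paper's factor $d^{n_1+\cdots+n_{r+1}-(r+1)}$ is exactly your $d^{z(w)}$, the count of occurrences of $x_0$). The only difference is presentational: you make explicit the Kronecker-delta bookkeeping and the adjointness reading that the paper leaves implicit.
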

\begin{proof} \ 
    \begin{enumerate}[label=(\alph*), leftmargin=*]
        \item It is enough to show this equality for $P_1 = x_0^{n_1-1} x_{g_1} \cdots x_0^{n_r-1} x_{g_r} x_0^{n_{r+1}-1} \in X_{G_1}^\ast$, due to linearity. We have $\phi^\sharp(P_1) = x_0^{n_1-1} x_{\phi(g_1)} \cdots x_0^{n_r-1} x_{\phi(g_r)} x_0^{n_{r+1}-1}$. Therefore,
        \begin{align*}
            (\phi^\ast(S_2) |P_1)_1 & = (\phi^\ast(S_2) | x_0^{n_1-1} x_{g_1} \cdots x_0^{n_r-1} x_{g_r} x_0^{n_{r+1}-1})_1 \\
            & = (S_2 | x_0^{n_1-1} x_{\phi(g_1)} \cdots x_0^{n_r-1} x_{\phi(g_r)} x_0^{n_{r+1}-1})_2 = (S_2 | \phi^\sharp(P_1))_2. 
        \end{align*}
        \item It is enough to show this equality for $P_2 = x_0^{n_1-1} x_{h_1} \cdots x_0^{n_r-1} x_{h_r} x_0^{n_{r+1}-1} \in X_{G_2}^\ast$, due to linearity. We have
        \[
            \phi_\sharp(P_2) = d^{n_1+\cdots+n_{r+1}-(r+1)} \sum_{\substack{g_i\in\phi^{-1}(\{h_i\}) \\ 1\leq i\leq r}} x_0^{n_1-1} x_{g_1} \cdots x_0^{n_r-1} x_{g_r} x_0^{n_{r+1}-1}.
        \]
        Therefore,
        \begin{align*}
            (\phi_\ast(S_1) |P_2)_2 & = (\phi_\ast(S_1) | x_0^{n_1-1} x_{h_1} \cdots x_0^{n_r-1} x_{h_r} x_0^{n_{r+1}-1})_2 \\
            & = d^{n_1+\cdots+n_{r+1}-(r+1)} \sum_{\substack{g_i\in\phi^{-1}(\{h_i\}) \\ 1\leq i\leq r}} (S_1 | x_0^{n_1-1} x_{g_1} \cdots x_0^{n_r-1} x_{g_r} x_0^{n_{r+1}-1})_1 \\
            & = (S_1 | \phi^\sharp(P_2))_1.  
        \end{align*}
    \end{enumerate}
\end{proof}

\begin{corollary}
    Let $\phi : G_1 \to G_2$ be an arrow in $\mathsf{FinAb}$.
    \begin{enumerate}[label=(\alph*), leftmargin=*]
        \item \label{cor:sh_alg_morph} The map $\phi^\sharp$ (resp. $\phi_\sharp$) is an algebra homomorphism $\h_{G_1,\sh} \to \h_{G_2,\sh}$ (resp. $\h_{G_2,\sh} \to \h_{G_1,\sh}$).
        \item The map $\phi^\sharp$ (resp. $\phi_\sharp$) restricts to an algebra homomorphism $\h^1_{G_1} \to \h^1_{G_2}$ (resp. $\h^1_{G_2} \to \h^1_{G_1}$). Moreover, it is an algebra homomorphism $\h^1_{G_1,\st} \to \h^1_{G_2,\st}$ (resp. $\h^1_{G_2,\st} \to \h^1_{G_1,\st}$).
    \end{enumerate}
    \label{cor:alg_morph}
\end{corollary}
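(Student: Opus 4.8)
The plan is to deduce all four statements from the duality between the $\ast$-functors and the $\sharp$-functors recorded in Lemma \ref{lem:astsharp}, combined with the fact (Lemma \ref{star_Hopf}) that $\phi^\ast$ and $\phi_\ast$ are coalgebra homomorphisms for the relevant coproducts, and the observation (Lemma \ref{lem:coproductonp}) that the coproducts $\widehat{\Delta}_{\sh}$ and $\widehat{\Delta}_{\st}$ are dual to the products $\sh$ and $\st$ under the pairing of Definition \ref{def:scalarproduct}. Conceptually, $\phi^\sharp$ (resp.\ $\phi_\sharp$) is the transpose of the coalgebra map $\phi^\ast$ (resp.\ $\phi_\ast$), and the transpose of a coalgebra homomorphism is an algebra homomorphism for the dual product; the whole proof is making this precise through the pairing.

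For part \ref{cor:sh_alg_morph}, I would argue as follows for $\phi^\sharp$. Fix $P, Q \in \h_{G_1}$. By non-degeneracy of $(-,-)_2$ it suffices to verify $(S \mid \phi^\sharp(P\sh Q))_2 = (S \mid \phi^\sharp(P)\sh \phi^\sharp(Q))_2$ for every word $S \in \A\langle\langle X_{G_2}\rangle\rangle$. Starting from the left, Lemma \ref{lem:astsharp}(a) moves $\phi^\sharp$ across the pairing to give $(\phi^\ast(S)\mid P\sh Q)_1$; Lemma \ref{lem:coproductonp} (with $\LL = X_{G_1}$, $\qsh = \sh$) rewrites this as $(\widehat{\Delta}_{G_1,\sh}(\phi^\ast(S))\mid P\otimes Q)_1$; Lemma \ref{star_Hopf}(a) replaces $\widehat{\Delta}_{G_1,\sh}\circ\phi^\ast$ by $(\phi^\ast)^{\otimes 2}\circ\widehat{\Delta}_{G_2,\sh}$; applying Lemma \ref{lem:astsharp}(a) on each tensor factor brings the two copies of $\phi^\ast$ back across as $\phi^\sharp$, yielding $(\widehat{\Delta}_{G_2,\sh}(S)\mid \phi^\sharp(P)\otimes\phi^\sharp(Q))_2$; and a final application of Lemma \ref{lem:coproductonp} turns this into $(S\mid \phi^\sharp(P)\sh\phi^\sharp(Q))_2$. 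The identical chain, using Lemma \ref{lem:astsharp}(b) and the $\phi_\ast$-half of Lemma \ref{star_Hopf}(a), settles $\phi_\sharp$.

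For part (b) I would first record that both maps restrict to the $\h^1$-subalgebras: under the identification $\h^1_G \simeq \Q\langle Y_G\rangle$ one has $\phi^\sharp(x_0^{n-1}x_g) = x_0^{n-1}x_{\phi(g)}$ and $\phi_\sharp(x_0^{n-1}x_h) = d^{\,n-1}x_0^{n-1}\sum_{\phi(g)=h} x_g$, so both send $Y_G$-words to $\A$-combinations of $Y_G$-words. The harmonic statement then follows by repeating verbatim the transpose argument of part (a), now with $\LL = Y_G$, with $\sh$ replaced by $\st$ and $\widehat{\Delta}_{\sh}$ by $\widehat{\Delta}_{\st}$: Lemma \ref{lem:coproductonp} applies because $\st$ is precisely the quasi-shuffle attached to $Y_G$, and Lemma \ref{star_Hopf}(b) supplies the compatibility of $\phi^\ast$, $\phi_\ast$ with $\widehat{\Delta}_{\st}$.

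The computations are routine once the framework is in place; the only point needing care is the bookkeeping of the two pairings $(-,-)_1$ and $(-,-)_2$ and the correct factorwise duality in the step $\big((\phi^\ast)^{\otimes2}(\Xi)\mid P\otimes Q\big)_1 = \big(\Xi\mid\phi^\sharp(P)\otimes\phi^\sharp(Q)\big)_2$, together with the non-degeneracy of the coefficient pairing on polynomials (pairing a polynomial against a single word recovers that word's coefficient, so equality against all words forces equality of polynomials). I expect this transpose-and-non-degeneracy bookkeeping to be the main, albeit modest, obstacle.
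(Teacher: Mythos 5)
Your proposal is correct and follows essentially the same route as the paper's own proof: both parts are established by the identical transpose argument through the pairing, chaining Lemma \ref{lem:astsharp}, Lemma \ref{lem:coproductonp}, and Lemma \ref{star_Hopf} in the same order, with part (b) obtained by substituting $Y_G$, $\st$, and $\widehat{\Delta}_{\st}$. The only difference is cosmetic: you spell out the restriction to $\h^1$ and the non-degeneracy of the coefficient pairing, which the paper leaves implicit.
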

\begin{proof} \ 
    \begin{enumerate}[label=(\alph*), leftmargin=*]
        \item Let $P, Q \in \h_{G_1}$.
        To prove the statement, we will instead demonstrate that for any $w \in X_{G_2}^\ast$, we have $(\phi^\sharp(P \, \sh \, Q), w)_2 = (\phi^\sharp(P) \, \sh \, \phi^\sharp(Q), w)_2$. The equivalence of these statements allows us to proceed as follows:
        \begin{align*}
            (\phi^\sharp(P \, \sh \, Q), w)_2 & = (P \, \sh \, Q, \phi^\ast(w))_1 = \left(P \, \otimes \, Q, \widehat{\Delta}_\sh \circ \phi^\ast(w)\right)_1 \\
            & = \left(P \, \otimes \, Q, (\phi^\ast)^{\otimes 2} \circ \widehat{\Delta}_\sh(w)\right)_1 = \left(\phi^\sharp(P) \, \otimes \, \phi^\sharp(Q), \widehat{\Delta}_\sh(w)\right)_2 \\
            & = \left(\phi^\sharp(P) \, \sh \, \phi^\sharp(Q), w\right)_2,
        \end{align*}
        where the first and fourth equalities come from Lemma \ref{lem:astsharp}, the second and last ones from Lemma \ref{lem:coproductonp} and the third one from Lemma \ref{star_Hopf} \ref{star_Hopf_sh}. \newline
        The exact same arguments applies for $\phi_\sharp$.
        \item The proof mirrors the reasoning employed in \ref{cor:sh_alg_morph}, with the key difference being the application of Lemma \ref{star_Hopf} \ref{star_Hopf_st} instead of \ref{star_Hopf} \ref{star_Hopf_sh}.
    \end{enumerate}
\end{proof}

\begin{corollary}
    \label{sharp_comm}
    Let $\phi : G_1 \to G_2$ be an arrow in $\mathsf{FinAb}$. We have an equality of $\Q$-linear maps $\h_{G_1} \to \h_{G_2}$
    \begin{equation}
        \label{comm_q_up}
        \q_{G_2} \circ \phi^\sharp = \phi^\sharp \circ \q_{G_1},
    \end{equation}
    and an equality of $\Q$-linear maps $\h_{G_2} \to \h_{G_1}$
    \begin{equation}
        \label{comm_q_down}
        \q_{G_1} \circ \phi_\sharp = \phi_\sharp \circ \q_{G_2}.
    \end{equation}
\end{corollary}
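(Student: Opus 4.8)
The plan is to prove both identities by direct evaluation on words, exactly mirroring the computation carried out for $\qq$ in Lemma \ref{ast_comm} \ref{qphiast_comm}. Since $\q_{G_1}$, $\q_{G_2}$, $\phi^\sharp$ and $\phi_\sharp$ are all $\Q$-linear and the words $x_0^{n_1-1}x_{g_1}\cdots x_0^{n_r-1}x_{g_r}x_0^{n_{r+1}-1}$ span $\h_{G_1}$ (resp.\ $\h_{G_2}$), it suffices to check each equality on such a word. Throughout I would use that $\phi^\sharp$ and $\phi_\sharp$ are algebra homomorphisms for the concatenation product (Proposition-Definition \ref{sharp_functors}) and that $\phi$ is a group homomorphism, so that $\phi(gg') = \phi(g)\phi(g')$.

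For \eqref{comm_q_up}, fix $w = x_0^{n_1-1}x_{g_1}\cdots x_0^{n_r-1}x_{g_r}x_0^{n_{r+1}-1} \in \h_{G_1}$. Applying $\q_{G_1}$ first and then $\phi^\sharp$ turns the letter $x_{g_i}$ into $x_{\phi(g_i g_{i-1}^{-1})}$, whereas applying $\phi^\sharp$ first and then $\q_{G_2}$ turns it into $x_{\phi(g_i)\phi(g_{i-1})^{-1}}$ (with the convention $g_0 = 1$); these coincide because $\phi$ is a homomorphism. The powers of $x_0$ are preserved by both compositions, so the two words agree and \eqref{comm_q_up} follows. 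This case is immediate and parallels the easy identity $\qq_{G_2}\circ\phi_\ast = \phi_\ast\circ\qq_{G_1}$ in Lemma \ref{ast_comm}.

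The substance lies in \eqref{comm_q_down}. Fix $w = x_0^{n_1-1}x_{h_1}\cdots x_0^{n_r-1}x_{h_r}x_0^{n_{r+1}-1} \in \h_{G_2}$ and set $N = \sum_{j}(n_j-1)$. Using that $\phi_\sharp$ is an algebra homomorphism with $\phi_\sharp(x_0) = d\,x_0$ and $\phi_\sharp(x_h) = \sum_{\phi(g)=h} x_g$, the composition $\phi_\sharp \circ \q_{G_2}$ produces $d^N$ times a sum of words whose $i$-th marked letter $x_{b_i}$ ranges over $\phi^{-1}(h_1)$ for $i=1$ and over $\phi^{-1}(h_i h_{i-1}^{-1})$ for $i\geq 2$. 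On the other hand $\q_{G_1}\circ\phi_\sharp$ produces $d^N$ times a sum of words $x_{g_1}x_{g_2 g_1^{-1}}\cdots x_{g_r g_{r-1}^{-1}}$ with $g_i$ ranging over $\phi^{-1}(h_i)$. The main obstacle is to match these two sums: the change of variables $b_1 = g_1$, $b_i = g_i g_{i-1}^{-1}$ (with inverse $g_i = b_i b_{i-1}\cdots b_1$) is a bijection between $\prod_{i=1}^r \phi^{-1}(h_i)$ and $\phi^{-1}(h_1)\times\prod_{i=2}^r \phi^{-1}(h_i h_{i-1}^{-1})$ --- one checks $\phi(b_i) = h_i h_{i-1}^{-1}$ in one direction and, by telescoping, $\phi(g_i) = h_i$ in the other, using that $\phi$ is a homomorphism --- and under it the two monomials coincide termwise. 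Since the $d^N$ factors and the $x_0$-powers also match, \eqref{comm_q_down} follows. This is the exact analogue of the change of variables $g_i' = g_{i-1}^{-1}g_i$ used for $\qq_{G_1}\circ\phi^\ast = \phi^\ast\circ\qq_{G_2}$ in Lemma \ref{ast_comm}, with the extra factor $d^N$ coming from $\phi_\sharp(x_0) = d\,x_0$.

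As an alternative I note that one could bypass the explicit combinatorics by transposing. By Lemma \ref{lem:astsharp} the maps $\phi^\sharp,\phi_\sharp$ are adjoint to $\phi^\ast,\phi_\ast$ for the pairing of Definition \ref{def:scalarproduct}, and by \eqref{qq_and_q} the operator $\qq_G$ is adjoint to $\q_G^{-1}$, whence $(\qq_G(\Phi)\mid\q_G(Q)) = (\Phi\mid Q)$. Pairing $\q_{G_2}(\phi^\sharp(P))$ and $\phi^\sharp(\q_{G_1}(P))$ against an arbitrary series and moving each operator to the other side of the pairing reduces \eqref{comm_q_up} to the relation $\phi^\ast\circ\qq_{G_2}^{-1} = \qq_{G_1}^{-1}\circ\phi^\ast$, which is simply the inverse of Lemma \ref{ast_comm} \ref{qphiast_comm}; identity \eqref{comm_q_down} follows symmetrically from the second relation there. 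I would nonetheless favour the direct computation above, as it is self-contained and isolates the bijection over the fibers of $\phi$ as the single nontrivial ingredient.
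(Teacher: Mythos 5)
Your argument is correct, and your primary route is genuinely different from the paper's. The paper proves both identities by duality: it pairs $\phi^\sharp\circ\q_{G_1}(P)$ against an arbitrary word $w\in X_{G_2}^\ast$, uses Lemma \ref{lem:astsharp} to move $\phi^\sharp$ across the pairing as $\phi^\ast$, uses \eqref{qq_and_q} to move $\q_{G_1}$ across as $\qq_{G_1}^{-1}$, invokes Lemma \ref{ast_comm} \ref{qphiast_comm} in the middle, and then transports everything back --- i.e.\ exactly the ``alternative'' you sketch in your last paragraph, which you have reconstructed faithfully (including the correct adjunctions $(\qq_G(\Phi)\mid \q_G(Q))=(\Phi\mid Q)$ and $\phi^\sharp\leftrightarrow\phi^\ast$, $\phi_\sharp\leftrightarrow\phi_\ast$). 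Your preferred direct computation is also sound: for \eqref{comm_q_up} the two compositions produce the letters $x_{\phi(g_ig_{i-1}^{-1})}$ and $x_{\phi(g_i)\phi(g_{i-1})^{-1}}$, which agree since $\phi$ is a homomorphism; for \eqref{comm_q_down} the factor $d^N$ and the $x_0$-blocks match on both sides, and the substitution $b_1=g_1$, $b_i=g_ig_{i-1}^{-1}$ with telescoping inverse $g_i=b_i\cdots b_1$ is indeed a bijection $\prod_i\phi^{-1}(h_i)\to\phi^{-1}(h_1)\times\prod_{i\geq 2}\phi^{-1}(h_ih_{i-1}^{-1})$ identifying the two sums term by term. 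This is precisely the change of variables the paper uses inside the proof of Lemma \ref{ast_comm} \ref{qphiast_comm}, so in effect you re-run that combinatorial argument on the $\sharp$ side rather than citing it through the pairing. What each approach buys: your direct proof is self-contained and makes the single nontrivial ingredient (the fiber bijection) explicit, at the cost of repeating the computation already done for $\qq_G$ and $\phi^\ast,\phi_\ast$; the paper's duality proof is shorter, reuses Lemma \ref{ast_comm} verbatim, and is stylistically consistent with how Corollary \ref{cor:alg_morph} is proved, but hides the combinatorics behind the adjunctions. Either is acceptable; no gap.
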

\begin{proof}
    Let $P \in \h_{G_1}$. To prove the statement, we will instead demonstrate that for any $w \in X_{G_2}^\ast$, we have $(\phi^\sharp \circ \q_{G_1}(P), w)_2 = (\q_{G_2} \circ \phi^\sharp(P), w)_2$. The equivalence of these statements allows us to proceed as follows:
    \begin{align*}
            (\phi^\sharp \circ \q_{G_1}(P), w)_2 & = (\q_{G_1}(P), \phi^\ast(w))_1 = \left(P, \, \qq_{G_1}^{-1} \circ \phi^\ast(w)\right)_1 = \left(P, \, \phi^\ast \circ \qq_{G_2}^{-1}(w)\right)_1 \\
            & = \left(\phi^\sharp(P) \, , \qq_{G_2}^{-1}(w)\right)_2 = \left(\q_{G_2} \circ \phi^\sharp(P) \, , w\right)_2,
        \end{align*}
        where the first and fourth equalities come from Lemma \ref{lem:astsharp}, the second and last ones from identity \eqref{qq_and_q} and the third one from Lemma \ref{ast_comm} \ref{qphiast_comm}. Finally, one may apply similar reasoning to prove identity \eqref{comm_q_down}.
\end{proof}

    \bibliographystyle{amsalpha}
    \bibliography{main}
\end{document}